\theoremstyle{plain}
\newtheorem{thm}{Theorem}[section]
\newtheorem*{thm*}{Theorem}
\newtheorem{lem}[thm]{Lemma}
\newtheorem{prop}[thm]{Proposition}
\theoremstyle{definition}
\newtheorem{defn}[thm]{Definition}
\newtheorem{nota}[thm]{Notation}
\newtheorem{ex}[thm]{Example}
\newtheorem{hyp}[thm]{Hypothesis}
\theoremstyle{remark}
\newtheorem{rem}[thm]{Remark}
\Crefname{thm}{Theorem}{Theorems}
\Crefname{lm}{Lemma}{Lemmata}
\Crefname{prop}{Proposition}{Propositions}
\Crefname{cor}{Corollary}{Corollaries}
\Crefname{hyp}{Hypothesis}{Hypotheses}
\Crefname{q}{Question}{Questions}
\Crefname{defn}{Definition}{Definitions}
\Crefname{nota}{Notation}{Notations}
\Crefname{ex}{Example}{Examples}
\Crefname{xca}{Exercise}{Exercises}
\Crefname{rem}{Remark}{Remarks}
\Crefname{constr}{Construction}{Constructions}
\newcommand{\Q}{\mathbb{Q}}
\newcommand{\Ccal}{\mathcal{C}}
\newcommand{\Rcal}{\mathcal{R}}
\newcommand{\Scal}{\mathcal{S}}
\newcommand{\id}{\textup{id}}
\newcommand{\Hom}{\textup{Hom}}
\newcommand{\Hbb}{\mathbb{H}}
\newcommand{\conn}{\mathsf{conn}}
\newcommand{\Fact}{\mathsf{Fact}}
\newcommand*{\isoarrow}[1]{\arrow[#1,"\rotatebox{90}{\(\sim\)}"]}
\newcommand{\Var}{\mathsf{Var}}
\newcommand{\Sm}{\mathsf{Sm}}
\newcommand{\Perv}{\mathsf{Perv}}
\newcommand\blfootnote[1]{%
	\begingroup
	\renewcommand\thefootnote{}\footnote{#1}%
	\addtocounter{footnote}{-1}%
	\endgroup
}
\title{Constructing monoidal structures on fibered categories via factorizations}
\author[Luca Terenzi]{Luca Terenzi}
\address{Luca Terenzi \newline
	\indent UMPA, ENS de Lyon \newline
	\indent 46 Allée d'Italie, 69007 Lyon (France)}
\email{\normalfont\href{mailto:luca.terenzi@ens-lyon.fr}{luca.terenzi@ens-lyon.fr}}
\begin{document}

\maketitle

\begin{abstract}
	Let $\Scal$ be a small category, and suppose that we are given two (non-full) subcategories $\Scal^{sm}$ and $\Scal^{cl}$ that generate all morphisms of $\Scal$ under composition in the same way as morphisms of quasi-projective algebraic varieties are generated by smooth morphisms and closed immersions. We show that a monoidal structure on a given $\Scal$-fibered category is completely determined by its restrictions to $\Scal^{sm}$ and $\Scal^{cl}$; in fact, any such pair of monoidal structures satisfying a natural coherence condition uniquely determines a monoidal structure over the whole of $\Scal$. The same principle applies to morphisms of $\Scal$-fibered categories and monoidality thereof.
	Under further assumptions on the two subcategories $\Scal^{sm}$ and $\Scal^{cl}$, and with suitable restrictions on the $\Scal$-fibered categories and morphisms involved, we provide a variant of the above method in which inverse images under closed immersion are partially replaced by the corresponding direct images; the latter variant is more adapted to the setting of perverse sheaves.
\end{abstract}

\tableofcontents

\blfootnote{\textit{\subjclassname}. 18D30, 18M05.}
\blfootnote{\textit{\keywordsname}. Monoidal fibered categories, factorizations, perverse sheaves.}

\blfootnote{The author acknowledges support by the GK 1821 "Cohomological Methods in Geometry" at the University of Freiburg and by the Labex Milyon at the ENS Lyon.}

\section*{Introduction}

\subsection*{Motivation and goal of the paper}

When studying a small category $\Scal$, it often proves useful to single out some distinguished classes of morphisms that one can control with relatively little effort; if these distinguished classes generate all morphisms in $\Scal$ under composition, one can hope to gain some control on the whole of $\Scal$ in this way. This principle is particularly effective in Algebraic Geometry, where results about the existence of distinguished factorizations for reasonable morphisms of schemes are among the most useful auxiliary tools in several situations. 
For the purposes of the present paper, we are interested in the abstract setting of $\Scal$-fibered categories, regarded as contravariant pseudo-functors with domain $\Scal$. There is a general factorization principle to turn collections of categories parametrized by $\Scal$ into actual $\Scal$-fibered categories: roughly speaking, one constructs inverse image functors along arbitrary morphisms of $\Scal$ by first defining inverse images for distinguished classes of morphisms and then pasting the partial constructions together. However, as the reader can guess, this idea cannot really work in practice unless the partial constructions are compatible in a precise sense.

In order to illustrate the general problem with a concrete example, consider the category $\Var_k$ of quasi-projective algebraic varieties over a field $k$. Among all morphisms in $\Var_k$ there exist two distinguished classes with particularly nice properties: the class of smooth morphisms and the class of closed immersions. It is known that every morphism $f: T \rightarrow S$ in $\Var_k$ admits some factorization of the form
\begin{equation}\label{intro:fact-f}
	f: T \xrightarrow{t} P \xrightarrow{p} S
\end{equation}
with $t$ a closed immersion and $p$ a smooth morphism. We want to use these morphisms as our basic building blocks. So suppose that we are given a collection of categories $\left\{\Hbb(S)\right\}_{S \in \Var_k}$ that we want to turn into a $\Var_k$-fibered category: this amounts to constructing inverse image functors along all morphisms of $\Var_k$ compatibly with composition in the natural way. Suppose also that we already know how to do this separately for the classes of smooth morphisms and closed immersions. Given a general morphism $f: T \rightarrow S$ in $\Var_k$, one can try to define the inverse image functor $f^*: \Hbb(S) \rightarrow \Hbb(T)$ as the composite
\begin{equation*}
	f^*: \Hbb(S) \xrightarrow{p^*} \Hbb(P) \xrightarrow{t^*} \Hbb(T)
\end{equation*}
according to a factorization of the above form \eqref{intro:fact-f}. The whole point is to show that the result is independent of the chosen factorization of $f$ and that it is compatible with composition of morphisms in $\Var_k$. Assuming to have a positive answer to the former question, let us focus on the latter.
Given a second morphism $g: S \rightarrow V$ in $\Var_k$, with factorization
\begin{equation*}
	g: S \xrightarrow{s} Q \xrightarrow{q} V
\end{equation*}
into a closed immersion $s$ followed by a smooth morphism $q$, one gets a factorization of the composite morphism $g f: T \rightarrow V$ as
\begin{equation*}
	g f: T \xrightarrow{t} P \xrightarrow{p} S \xrightarrow{s} Q \xrightarrow{q} V.
\end{equation*}
Since the latter does not quite have the correct form, one is led to choose a further factorization of the composite morphism $s p: P \rightarrow Q$ as
\begin{equation*}
	s p: P \xrightarrow{h} L \xrightarrow{l} Q 
\end{equation*}
with $h$ a closed immersion and $l$ a smooth morphism. In this way, one can finally write $g f$ as
\begin{equation*}
	g f: T \xrightarrow{ht} L \xrightarrow{lq} V
\end{equation*}
where now, by construction, $ht$ is a closed immersion while $lq$ is a smooth morphism. At this point, if one wants to compare the composite of inverse images
\begin{equation*}
	f^* g^*: \Hbb(V) \xrightarrow{q^*} \Hbb(Q) \xrightarrow{s^*} \Hbb(S) \xrightarrow{p^*} \Hbb(P) \xrightarrow{t^*} \Hbb(T)
\end{equation*}
with the inverse image of the composite
\begin{equation*}
	(g f)^*: \Hbb(V) \xrightarrow{q^*} \Hbb(Q) \xrightarrow{l^*} \Hbb(L) \xrightarrow{p^*} \Hbb(P) \xrightarrow{t^*} \Hbb(T), 
\end{equation*}
one needs to compare the two possible paths of inverse image functors along the commutative square
\begin{equation}\label{intro:square-PLSQ}
	\begin{tikzcd}
		P \arrow{r}{h} \arrow{d}{p} & L \arrow{d}{l} \\
		S \arrow{r}{s} & Q.
	\end{tikzcd}
\end{equation}
The theory of \textit{exchange structures} developed in \cite[\S~1.2]{Ayo07a} provides a rigorous method for this: it is based on the datum of additional natural isomorphisms attached to squares of the form \eqref{intro:square-PLSQ} which are required to satisfy a suitable compatibility condition with respect to vertical and horizontal concatenation of such squares.

The main goal of this paper is to adapt the factorization principle to other construction problems within the theory of $\Scal$-fibered categories: specifically, we are interested in morphisms of $\Scal$-fibered categories, monoidal structures on $\Scal$-fibered categories, and monoidal morphisms of such. Compared to the case of $\Scal$-fibered categories just discussed, these questions turn out to be somewhat more elementary, since the task becomes to construct natural transformations between already existing functors rather than to construct functors themselves: coherence of natural transformations is a property rather than an additional structure.

Our main motivation behind this work comes from the theory of perverse sheaves over quasi-projective algebraic varieties, where one wants to exploit the factorization of morphisms into closed immersions and smooth morphisms as displayed above. To fix notation, let $k$ be a subfield of the complex numbers, and consider again the category $\Var_k$ of quasi-projective $k$-varieties. To every $S \in \Var_k$ one can attach the algebraically constructible $\Q$-linear derived category $D^b_c(S,\Q)$, a tensor-triangulated category. As $S$ varies, these enjoy a full six functor formalism and, in particular, assemble into a monoidal $\Var_k$-fibered category. The theory of \cite{BBD82} yields a perverse $t$-structure on $D^b_c(S,\Q)$, whose heart is the $\Q$-linear category of \textit{perverse sheaves}, denoted $\Perv(S)$. Beilinson's classical result \cite[Thm.~1.3]{Bei87} asserts that the triangulated category $D^b_c(S,\Q)$ is canonically equivalent to $D^b(\Perv(S))$. This brings the question of how much of the six functor formalism can be reconstructed from the abelian categories $\Perv(S)$.
What makes this question non-trivial is the fact that most functors belonging to the six functor formalism are not $t$-exact with respect to the perverse $t$-structure. For instance, the usual tensor product functor
\begin{equation*}
	- \otimes - : D^b_c(S,\Q) \times D^b_c(S,\Q) \rightarrow D^b_c(S,\Q)
\end{equation*}
does not respect the perverse $t$-structure unless $\dim(S) = 0$. As a remedy, in our paper \cite{Ter24Fib} we showed that the entire structure of the constructible derived categories as a monoidal $\Var_k$-fibered category can be rephrased in terms of the external tensor product functors
\begin{equation*}
	- \boxtimes - : D^b_c(S_1,\Q) \times D^b_c(S_2,\Q) \rightarrow D^b_c(S_1 \times S_2,\Q),
\end{equation*}
which are always $t$-exact (see \cite[Prop.~4.2.8]{BBD82}). The usual associativity, symmetry, and (to some extent) unitarity properties of the single tensor product functors can be expressed in terms of $t$-exact functors, so they can be tested directly on the level of perverse sheaves.
However, a fundamental part of the structure of monoidal $\Var_k$-fibered category is encoded in the monoidality isomorphisms for inverse image functors. Since the inverse image under a general morphism in $\Var_k$ does not enjoy any $t$-exactness property, one cannot hope to express monoidality purely in terms of $t$-exact functors just by replacing the usual tensor product by the external one. On the other hand, it turns out that perverse sheaves are sufficiently well-behaved with respect to smooth morphisms and closed immersions: for a smooth morphism $p: P \rightarrow S$, the inverse image $p^*$ is $t$-exact up to appropriate shiftings (see \cite[\S~4.2.4]{BBD82}), while, for a closed immersion $t: T \rightarrow P$, it is the direct image $t_*$ that is $t$-exact (see \cite[Cor.~4.1.3]{BBD82}). This suggests that, when working with the external tensor product, it should be possible to reconstruct monoidality under a general morphism $f: T \rightarrow S$ in terms of natural isomorphisms of $t$-exact functors, by means of a factorization of the form \eqref{intro:fact-f}. This turns out to be the case, but with a caveat: rephrasing monoidality of inverse images under closed immersions as a property of the corresponding direct images is not a purely formal step, since it crucially requires an important property of the six functor formalism known as \textit{localization}. 

What we have just explained in the setting of perverse sheaves could be repeated, more generally, for systems of triangulated categories satisfying the six functor formalism and endowed with reasonable perverse $t$-structures. The most prominent example is M. Saito's celebrated theory of \textit{mixed Hodge modules}, developed in \cite{Sai90}. In this case, the triangulated categories of coefficients are actually defined as the bounded derived categories of the perverse hearts, and their functoriality is constructed starting from the level of abelian categories. Our work enlightens important category-theoretic features of Saito's construction, especially around the monoidal structure on mixed Hodge modules, which are not addressed systematically in \cite{Sai90}. 

We ultimately intend to apply our abstract results to the emerging theory of \textit{perverse Nori motives}, a further motivic refinement of Saito's theory, recently introduced F. Ivorra and S. Morel in \cite{IM19}. As in the case of mixed Hodge modules, one has a system of abelian categories of enhanced perverse sheaves and one wants to equip their bounded derived categories with a six functor formalism. In this new setting, however, the perverse hearts are defined by a purely categorical procedure, which forces one to be very careful about the formal properties of functors and natural transformations relating them. In our article \cite{Ter24Nori}, exploiting the general techniques developed here, we construct a monoidal structure on perverse Nori motives by working directly with the external tensor product on the perverse hearts. The only part of the monoidal structure falling out of the application range of the present paper is the unit constraint; in \cite{Ter24Emb} we develop further reduction techniques to treat this specific problem.


\subsection*{Main results}

Both for sake of notational clarity and in order not to restrict the application range of our results unnecessarily, we formulate everything in the abstract language of monoidal fibered categories. Throughout the paper, we work over a fixed base category $\Scal$; for our constructions involving the external tensor product, we need to assume that $\Scal$ admits binary products. 
Since our main results are motivated by geometric situations, in the course of the paper we are led to introduce more structure and assumptions on the base category $\Scal$: we suppose to be given two non-full subcategories $\Scal^{sm}$ and $\Scal^{cl}$ mimicking the behavior of smooth morphisms and closed immersions in $\Var_k$; for part of our constructions, we also need to assume the existence of special morphisms in $\Scal^{sm}$ mimicking the behavior of open immersions. 

Our final goal is to obtain a factorization method adapted to the theory of perverse sheaves. The argument developed in \cite[\S~4.2]{IM19} has served both as an inspiration and as a model for our abstract results. We aim at expressing various structures on $\Scal$-fibered categories in terms of inverse images under $\Scal^{sm}$ and direct images under $\Scal^{cl}$. In order to make this idea work, we need to introduce suitable assumptions on the $\Scal$-fibered categories considered: roughly speaking, we want to reproduce the setting of stable homotopy $2$-functors from \cite{Ayo07a,Ayo07b} for what concerns the localization axiom, but avoiding any use of the homotopy and stability axioms. We do this by introducing the notion of \textit{localic} triangulated $\Scal$-fibered category, which takes our setting close to that of geometric pullback formalisms introduced in \cite{DrewGal}.

Even if we are mainly interested in applying the factorization method to the construction of monoidal $\Scal$-fibered categories and monoidal morphisms, it is useful to start from the case of morphisms of $\Scal$-fibered categories: first, this case is interesting in its own, and also implicitly needed in the study of monoidality for morphisms; second, the argument for monoidal $\Scal$-fibered categories is just a notationally more intricate variant of the argument for morphisms of $\Scal$-fibered categories. It seems natural to divide our main construction into two main steps: in the first one, we obtain a general factorization method based on inverse images under $\Scal^{sm}$ and under $\Scal^{cl}$; in the second one, we explain how to partially replace inverse images under $\Scal^{cl}$ with the corresponding direct images in the localic case. 
We describe these two constructions axiomatically via the notions of \textit{$\Scal$-skeleton} and of \textit{$\Scal$-core} of a morphism, respectively. While the first notion makes sense in general, the second one works only for certain morphisms between localic $\Scal$-fibered categories that it seems natural to call \textit{localic}: this covers, for example, all morphisms of stable homotopy $2$-functors in the sense of \cite{Ayo10}. The skeleton of a morphism of $\Scal$-fibered categories consists of its restrictions to the underlying $\Scal^{sm}$-fibered and $\Scal^{cl}$-fibered categories, subject to a natural coherence condition with respect to squares of the form \eqref{intro:square-PLSQ}. For the core of a localic morphism, the only difference is that one regards the underlying morphism of $\Scal^{cl}$-fibered categories as a morphism of $\Scal^{cl,op}$-fibered categories by replacing inverse images under $\Scal^{cl}$ by the corresponding direct images; in this case, the coherence condition between the two restrictions is slightly less obvious - the rough idea being to decompose a square of the form \eqref{intro:square-PLSQ} into a Cartesian square and two triangles and analyze the three pieces separately.
After this preamble, we can state our first main result as follows:
\begin{thm*}[\Cref{prop_mor-skel} and \Cref{prop_mor-core}]
	Let $R: \Hbb_1 \rightarrow \Hbb_2$ be a morphism of $\Scal$-fibered categories. Then the following statements hold:
	\begin{enumerate}
		\item Suppose that $\Scal$ satisfies \Cref{hyp_skel}. Then the morphism $R$ is completely determined by its restrictions to the underlying $\Scal^{sm}$-fibered and $\Scal^{cl}$-fibered categories. Conversely, such a pair of restrictions gives rise to a morphism over $\Scal$ if and only if it defines an $\Scal$-skeleton.
		\item Suppose that $\Scal$ satisfies the stronger \Cref{hyp_core}. In addition, suppose that the $\Scal$-fibered categories $\Hbb_1$ and $\Hbb_2$ are localic and that $R$ is a localic morphism. 
		
		Then the morphism $R$ is completely determined by its restrictions to the underlying $\Scal^{sm}$-fibered and $\Scal^{cl,op}$-fibered categories. Conversely, such a pair of restrictions gives rise to a (localic) morphism over $\Scal$ if and only if it defines an $\Scal$-core.
	\end{enumerate}
\end{thm*}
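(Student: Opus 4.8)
The plan is to establish part (1) directly and then reduce part (2) to it. Throughout, note that the fibrewise functors $R_S \colon \Hbb_1(S) \to \Hbb_2(S)$ are already fixed by either restriction, so the only datum to be reconstructed is the family of connection isomorphisms $R_f \colon f_2^* R_S \xrightarrow{\sim} R_T f_1^*$ attached to a general morphism $f \colon T \to S$, the functors $f_1^*, f_2^*$ being part of the given $\Scal$-fibered structures on $\Hbb_1, \Hbb_2$. For the ``completely determined'' direction of (1) I would simply invoke the composition coherence axiom of a morphism of $\Scal$-fibered categories: choosing a factorization $f = p t$ with $t \in \Scal^{cl}$ and $p \in \Scal^{sm}$ afforded by \Cref{hyp_skel}, coherence forces $R_f$ to be the paste of $R_p$ and $R_t$ along the composition isomorphisms $p_i^* t_i^* \cong f_i^*$ of $\Hbb_i$. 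Hence $R_f$ is determined by the restrictions of $R$ to $\Scal^{sm}$ and $\Scal^{cl}$.

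For the converse in (1), I would start from a pair of restrictions — a morphism of $\Scal^{sm}$-fibered categories and a morphism of $\Scal^{cl}$-fibered categories sharing the functors $R_S$ — and take the displayed paste as the \emph{definition} of $R_f$. Two things then require checking: that $R_f$ is independent of the chosen factorization, and that the family $\{R_f\}$ satisfies composition coherence over all of $\Scal$. Both reduce to the $\Scal$-skeleton condition, i.e.\ the coherence of the two restrictions on squares of the form \eqref{intro:square-PLSQ}. For well-definedness I would connect two factorizations $f = p_1 t_1 = p_2 t_2$, passing through $P_1$ and $P_2$, by a common refinement through a fibre product $P_1 \times_S P_2$: the induced map from $T$ again lies in $\Scal^{cl}$ and the projections lie in $\Scal^{sm}$, so each original factorization dominates the refinement through squares of the required form, and the skeleton condition on these squares identifies the two candidate values of $R_f$.

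Compatibility with composition is the technical heart of (1). Given $f = pt$ and $g = qs$ with $t, s \in \Scal^{cl}$ and $p, q \in \Scal^{sm}$, the composite $gf$ inherits the a priori ill-formed factorization through $P$, $S$, $Q$; following the scheme of the Introduction I would re-factor $s p$ as $l h$ with $h \in \Scal^{cl}$ and $l \in \Scal^{sm}$, producing the well-formed factorization $g f \colon T \xrightarrow{ht} L \xrightarrow{lq} V$ together with the square \eqref{intro:square-PLSQ}. Expanding $R_{gf} = R_{lq} \ast R_{ht}$ by the composition coherence of the smooth and closed restrictions yields a paste involving $R_l \ast R_h$ along the square, whereas $R_g \ast R_f$ involves $R_s \ast R_p$; the skeleton condition asserts exactly that these two agree, giving $R_{gf} = R_g \ast R_f$. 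This is the step I expect to be most delicate, since one must track the composition isomorphisms of $\Hbb_1$ and $\Hbb_2$ on both sides and verify that the residual pastes cancel by the associativity and unit constraints of the two fibered structures.

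For part (2) I would reduce to (1) by translating the direct-image data back into inverse-image data. Under the localic hypotheses a closed immersion $t$ has a fully faithful direct image $t_*$ right adjoint to $t^*$, so a connection isomorphism for $t_*$ in the $\Scal^{cl,op}$-restriction determines, and is determined by, one for $t^*$; this sets up the dictionary between an $\Scal$-core and a candidate $\Scal$-skeleton. The only genuinely non-formal point is to match the two coherence conditions: I would decompose a square \eqref{intro:square-PLSQ} into a Cartesian square and two triangles, as indicated in the Introduction, and check that the core condition on the three pieces is equivalent to the skeleton condition on the whole. The Cartesian piece is handled by the base-change (exchange) isomorphism between $t_*$ and smooth inverse image supplied by localicity, while the triangles are handled by the adjunction compatibility of the localic morphism $R$. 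Once this equivalence is in place, part (1) applies verbatim to the translated skeleton and produces the localic morphism, so the main obstacle in (2) is precisely verifying that localization — and not mere formal adjunction — makes the core and skeleton coherence conditions interchangeable.
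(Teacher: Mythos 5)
Your proposal is correct and follows essentially the same route as the paper: for (1), define $\theta_f$ via a chosen factorization, prove independence of the choice by dominating any two factorizations through the fibre product $P\times_S P'$ (the connectedness of the factorization category), and verify composition coherence by re-factoring $s\circ p$ and invoking the exchange condition on the resulting square; for (2), reduce to (1) by passing between $\theta^{cl}$ and $\bar\theta^{cl}$ via adjunction (where localization, not formal adjunction, supplies the invertibility of the mate) and by checking that the Cartesian-square coherence conditions (C-$\Scal$-fib) and (C'-$\Scal$-fib) are equivalent, the triangle condition being literally the same on both sides.
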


We treat the case of monoidal structures on $\Scal$-fibered categories by applying the same method to monoidality isomorphisms: first of all, we show that these are always determined by those under $\Scal^{sm}$ and under $\Scal^{cl}$; then, we explain how monoidality isomorphisms under $\Scal^{cl}$ can be partially rephrased in terms of direct images in the localic setting. We formulate this precisely via the notions of \textit{external tensor skeleton} and of \textit{external tensor core}, respectively; the coherence conditions with respect to squares of the form \eqref{intro:square-PLSQ} needed in the two cases are analogous to those for $\Scal$-skeleta and $\Scal$-cores.
As the adjective "external" suggests, we have chosen to work systematically with the external tensor product in place of the usual internal tensor product; this is legitimate in view of our work \cite{Ter24Fib}, where the theory of \textit{external tensor structures} is developed in detail.
Compared to the internal tensor product, the external tensor product is better suited to factorization questions, for at least two reasons: first, because it often enjoys better adjointability properties (see \Cref{rem:whatifITS}); second, because it allows us to interpret monoidal structures on $\Scal$-fibered as suitable $2$-parameter families of morphisms of $\Scal$-fibered categories, so that we can deduce our results for monoidal structures from the analogous results for morphisms quite directly. For sake of brevity, we will not make such an interpretation explicit here; the interested reader can obtain it with little difficulty. Let us mention that the notion of external tensor core is adapted to external tensor structures satisfying suitable adjointability properties analogous to those of localic $\Scal$-fibered categories and localic morphisms: we call them \textit{localic} external tensor structures. This said, we can state our second main result as follows:
\begin{thm*}[\Cref{thm_ETSskel} and \Cref{thm_ETScore}]
	Let $\Hbb$ be an $\Scal$-fibered category endowed with an external tensor structure $(\boxtimes,m)$. Then the following statements hold:
	\begin{enumerate}
		\item Suppose that $\Scal$ satisfies \Cref{hyp_skel}. Then the external tensor structure $(\boxtimes,m)$ is completely determined by its restrictions to the underlying $\Scal^{sm}$-fibered and $\Scal^{cl}$-fibered categories. Conversely, such a pair of restrictions gives rise to an external tensor structure over $\Scal$ if and only if it defines an external tensor skeleton.
		\item Suppose that $\Scal$ satisfies the stronger \Cref{hyp_core}. In addition, suppose that the $\Scal$-fibered category $\Hbb$ is localic and that $(\boxtimes,m)$ is localic. 
		
		Then the external tensor structure $(\boxtimes,m)$ is completely determined by its restrictions to the underlying $\Scal^{sm}$-fibered and $\Scal^{cl,op}$-fibered categories. Conversely, such a pair of restrictions gives rise to a (localic) external tensor structure over $\Scal$ if and only if it defines an external tensor core.
	\end{enumerate}
\end{thm*}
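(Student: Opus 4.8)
The plan is to deduce both statements from the corresponding results for morphisms of $\Scal$-fibered categories, namely \Cref{prop_mor-skel} and \Cref{prop_mor-core}, by reinterpreting an external tensor structure as a single morphism over the base $\Scal \times \Scal$. Concretely, the external product functors together with their monoidality isomorphisms $m$ package into a morphism of $(\Scal \times \Scal)$-fibered categories, which I denote again by $\boxtimes$,
\[
	\boxtimes \colon \Hbb^{\boxtimes 2} \longrightarrow \times^{*}\Hbb,
\]
where $\Hbb^{\boxtimes 2}$ is the $(\Scal \times \Scal)$-fibered category with fibre $\Hbb(S_1) \times \Hbb(S_2)$ over $(S_1,S_2)$ and inverse images $f_1^{*} \times f_2^{*}$, while $\times^{*}\Hbb$ is the pullback of $\Hbb$ along the product functor $\times \colon \Scal \times \Scal \to \Scal$, with fibre $\Hbb(S_1 \times S_2)$ and inverse images along $f_1 \times f_2$. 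Under this dictionary the external product bifunctors provide the underlying functors of the morphism, and the monoidality constraints $m_{f_1,f_2}\colon (f_1\times f_2)^{*}(-\boxtimes-) \xrightarrow{\sim} f_1^{*}(-)\boxtimes f_2^{*}(-)$ are exactly the structure isomorphisms demanded of a morphism of fibered categories; the remaining associativity, symmetry and unit constraints of $(\boxtimes,m)$ live fibrewise over fixed base objects and are carried along. My first task would be to make this correspondence precise and to check that the coherence of $m$ with respect to composition coincides with the morphism coherence.

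Next I would verify that the hypotheses on $\Scal$ transfer to $\Scal \times \Scal$ once one sets $(\Scal \times \Scal)^{sm} \coloneqq \Scal^{sm} \times \Scal^{sm}$ and $(\Scal \times \Scal)^{cl} \coloneqq \Scal^{cl} \times \Scal^{cl}$. The essential point is that a morphism $(f_1,f_2)$ factors as the product $(p_1,p_2)\circ(t_1,t_2)$ of factorizations of $f_1$ and $f_2$, and that a coherence square of the shape \eqref{intro:square-PLSQ} in $\Scal \times \Scal$ is precisely a pair of such squares in $\Scal$; hence \Cref{hyp_skel} for $\Scal$ yields \Cref{hyp_skel} for $\Scal \times \Scal$, and likewise for \Cref{hyp_core}. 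For the second statement I would additionally check that $\Hbb^{\boxtimes 2}$ and $\times^{*}\Hbb$ are localic over $\Scal \times \Scal$ and that $\boxtimes$ is a localic morphism: here the direct image along $(t_1,t_2)$ is $t_{1*}\times t_{2*}$ on $\Hbb^{\boxtimes 2}$ and $(t_1\times t_2)_{*}$ on $\times^{*}\Hbb$, so that preservation of the localization datum and of the relevant adjointability is exactly the content of $(\boxtimes,m)$ being a \emph{localic} external tensor structure.

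With these checks in place, \Cref{prop_mor-skel} applied to $\boxtimes$ gives statement (1): the morphism is determined by its restrictions to the $(\Scal^{sm}\times\Scal^{sm})$- and $(\Scal^{cl}\times\Scal^{cl})$-fibered categories, and such a pair glues if and only if it is a $(\Scal\times\Scal)$-skeleton. It then remains to translate this back through the dictionary above: the two restrictions of $\boxtimes$ are precisely the restrictions of $(\boxtimes,m)$ to the underlying $\Scal^{sm}$- and $\Scal^{cl}$-fibered categories, and the $(\Scal\times\Scal)$-skeleton coherence condition unwinds verbatim into the defining condition of an external tensor skeleton. Statement (2) follows in the same way from \Cref{prop_mor-core}, the localic hypotheses ensuring that the restriction to $\Scal^{cl}$ may be replaced by the restriction to $\Scal^{cl,op}$ via direct images, and the $(\Scal\times\Scal)$-core condition matching the definition of an external tensor core.

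The main obstacle I anticipate is bookkeeping rather than conceptual. On the one hand, one must confirm that the full external tensor structure — including the associativity and symmetry constraints and their compatibility with $m$ — is faithfully encoded in, and recovered from, the morphism $\boxtimes$ over $\Scal\times\Scal$; this is where the reduction could hide subtleties, since the coherence of those constraints with pullback is not literally part of the bare morphism data and must be checked to be transported correctly. On the other hand, the genuinely non-formal step lies in the core case: rewriting monoidality under closed immersions in terms of direct images is legitimate only by invoking the localization axiom, and the translation of the core coherence condition requires decomposing a square of the form \eqref{intro:square-PLSQ} into a Cartesian square and two triangles, exactly as in the proof of \Cref{prop_mor-core}. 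I would expect the verification of this last compatibility, now in the two-variable setting over $\Scal\times\Scal$, to be the most delicate part of the argument.
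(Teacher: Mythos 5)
Your reduction of part (1) to \Cref{prop_mor-skel} is sound, and it is in fact the route the introduction alludes to (interpreting the external tensor product as a $2$-parameter family of morphisms); the paper itself instead proves \Cref{thm_ETSskel} by rerunning the factorization argument directly, defining $m_{f_1,f_2}$ as $m^{sm}_{p_1,p_2}\circ m^{cl}_{t_1,t_2}$ for a chosen pair of factorizations and repeating the independence and cocycle checks of \Cref{prop_mor-skel}. Your dictionary $\boxtimes\colon \Hbb^{\boxtimes 2}\to\times^{*}\Hbb$ over $\Scal\times\Scal$, with $(\Scal\times\Scal)^{sm}=\Scal^{sm}\times\Scal^{sm}$ and $(\Scal\times\Scal)^{cl}=\Scal^{cl}\times\Scal^{cl}$, does transport \Cref{hyp_skel}, and the resulting $(\Scal\times\Scal)$-skeleton condition unwinds to (ex-ETS-0) and (ex-ETS-1); so part (1) goes through, with the associativity and commutativity constraints handled separately as in \Cref{lem_constr-skel}.

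The gap is in part (2): the target $\times^{*}\Hbb$ of your morphism is \emph{not} localic over $\Scal\times\Scal$, so \Cref{prop_mor-core} cannot be applied to it. The open immersion complementary to $(z_1,z_2)\colon(Z_1,Z_2)\to(S_1,S_2)$ in $\Scal\times\Scal$ is forced by the universal property of \Cref{hyp_core}(ii) to be $(u_1,u_2)$, but $U_1\times U_2$ is not the open complement of $Z_1\times Z_2$ inside $S_1\times S_2$ (that complement is covered by $U_1\times S_2$ and $S_1\times U_2$). Hence axiom (d) of \Cref{defn_Hlocal} and the localization triangle of \Cref{lem_Hlocal}(3) fail for $\times^{*}\Hbb$: already for $\Hbb=D^b_c(-,\Q)$ over $\Var_k$, an object supported on $Z_1\times U_2$ is killed by both $(z_1\times z_2)^*$ and $(u_1\times u_2)^*$ without being zero. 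Since the passage from inverse to direct images under closed immersions in \Cref{prop_mor-core} rests precisely on the localization triangle in the target (via \Cref{lem_Scl_Scl-op}), your reduction breaks at exactly the step you flagged as "the genuinely non-formal" one. The repair, which is what the paper's one-variable reformulation of adjointability after \Cref{defn:ETS-adj} implicitly encodes, is to treat the two variables separately: $z_1\times\id_{S_2}$ does have open complement $u_1\times\id_{S_2}$, so localization applies to it, and the two-variable statements follow by composing $z_1\times z_2=(z_1\times\id_{S_2})\circ(\id_{Z_1}\times z_2)$. As written, your argument for part (2) is incomplete without this modification.
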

 
Finally, we introduce analogous notions and obtain analogous results about the monoidality of morphisms between monoidal $\Scal$-fibered categories; again, we formulate everything in terms of external tensor structures. We can state our third main result as follows:

\begin{thm*}[\Cref{prop_mor_ETSkel} and \Cref{thm:ETScore-mor}]
	For $j = 1,2$ let $\Hbb_j$ be an $\Scal$-fibered category endowed with an external tensor structure $(\boxtimes_j,m_j)$; in addition, let $R: \Hbb_1 \rightarrow \Hbb_2$ be a morphism of $\Scal$-fibered categories, and let $\rho$ be an external tensor structure on $R$ (with respect to $(\boxtimes_1,m_1)$ and $(\boxtimes_2,m_2)$). Then the following statements hold:
	\begin{enumerate}
		\item Suppose that $\Scal$ satisfies \Cref{hyp_skel}. Then the external tensor structure $\rho$ is completely determined by its restrictions to the underlying $\Scal^{sm}$-fibered and $\Scal^{cl}$-fibered categories. Conversely, such a pair of restrictions gives rise to an external tensor structure over $\Scal$ if and only if it defines an external tensor skeleton.
		\item Suppose that $\Scal$ satisfies the stronger \Cref{hyp_core}. In addition, suppose that the $\Scal$-fibered categories $\Hbb_1$ and $\Hbb_2$ are localic, that $(\boxtimes_1,m_1)$ and $(\boxtimes_2,m_2)$ are localic, and that $R$ is a localic morphism. 
		
		Then the external tensor structure $\rho$ is completely determined by its restrictions to the underlying $\Scal^{sm}$-fibered and $\Scal^{cl,op}$-fibered categories. Conversely, such a pair of restrictions gives rise to a (localic) external tensor structure over $\Scal$ if and only if it defines an external tensor core.
	\end{enumerate}
\end{thm*}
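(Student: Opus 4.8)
The plan is to deduce the statement from the case of morphisms of $\Scal$-fibered categories settled in \Cref{prop_mor-skel} and \Cref{prop_mor-core}, exploiting the external formalism exactly along the lines indicated in the introduction. Recall that an external tensor structure $\rho$ on $R$ consists of a natural isomorphism
\[
	\rho_{X_1,X_2} \colon R(X_1) \boxtimes_2 R(X_2) \xrightarrow{\sim} R(X_1 \boxtimes_1 X_2),
\]
living over products $S_1 \times S_2$ for $X_i \in \Hbb_1(S_i)$, together with a coherence datum expressing compatibility of this family with the monoidality isomorphisms $m_1$, $m_2$ along the inverse image $f^*$ for every morphism $f$ of $\Scal$. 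The underlying natural isomorphism is indexed by pairs of objects and lives over products, hence is insensitive to the subcategory structure; what the two subcategories $\Scal^{sm}$ and $\Scal^{cl}$ control is precisely the inverse-image coherence, which is indexed by morphisms of $\Scal$. The conceptual point, already stressed in the introduction, is that this coherence is a \emph{property} of the fixed natural isomorphism $\rho$ rather than an additional structure: once the family $\rho_{X_1,X_2}$ is given, compatibility with $f^*$ is a condition rather than a choice.

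For part (1), I would first treat determinacy. Given a general morphism $f$ of $\Scal$, choose a factorization $f \colon T \xrightarrow{t} P \xrightarrow{p} S$ with $t \in \Scal^{cl}$ and $p \in \Scal^{sm}$ as in \eqref{intro:fact-f}. The inverse-image coherence of $\rho$ along $f$ is obtained by pasting the coherence data along $p$ and along $t$, in the same way that $f^*$ is built from $p^*$ and $t^*$ in the construction of the underlying $\Scal$-fibered categories; hence the restrictions of $\rho$ to $\Scal^{sm}$ and to $\Scal^{cl}$ determine $\rho$ completely. For the converse, given a pair of restrictions forming an external tensor skeleton, I would \emph{define} the coherence of $\rho$ along an arbitrary $f$ by the same pasting along a chosen factorization, and then verify the two points constituting an external tensor structure: independence of the chosen factorization, and compatibility with composition in $\Scal$. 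Both reduce to the coherence imposed on squares of the form \eqref{intro:square-PLSQ}; independence is handled by comparing two factorizations through a common refinement, while compatibility with composition is precisely the analysis sketched in the introduction, where $gf$ is refactored through the square \eqref{intro:square-PLSQ} attached to $sp$. Since these are the very conditions defining the skeleton, the construction goes through. The only additional bookkeeping compared with \Cref{prop_mor-skel} is that every diagram now carries the second tensor variable, but the associativity, symmetry, and unit constraints of $(\boxtimes_j,m_j)$ are phrased in terms of the morphism-insensitive natural isomorphisms and are therefore preserved automatically.

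For part (2), the localic case, the natural isomorphism $\rho$ is unchanged, but its coherence along a closed immersion $t \in \Scal^{cl}$ is now to be expressed in terms of the direct image $t_*$ rather than $t^*$. I would follow the pattern of \Cref{prop_mor-core}: using that $\Hbb_1$, $\Hbb_2$ are localic, that the external tensor structures are localic, and that $R$ is localic, one transports the inverse-image coherence along $t$ into a direct-image coherence by means of the adjunction $(t^*,t_*)$ together with the localization axiom, which supplies the requisite exchange isomorphisms. The coherence condition defining an external tensor core is then obtained by decomposing a square of the form \eqref{intro:square-PLSQ} into a Cartesian square and two triangles and analyzing the three pieces separately: the Cartesian square governs base change between $t_*$ and $p^*$, while the two triangles encode the compatibility of the smooth and closed parts with the direct-image reformulation. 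Granting these exchange isomorphisms, the determinacy and existence arguments run exactly as in part (1), with the second tensor variable carried along as before.

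The main obstacle I anticipate is the well-definedness in the core case. Passing from $t^*$ to $t_*$ is not formal: it relies essentially on localization, so I must check that the exchange isomorphisms entering the square decomposition are mutually coherent and compatible with the external tensor structures, so that the direct-image reformulation of $\rho$ along $t$ is independent of all auxiliary choices. Concretely, the delicate step will be to verify that the base-change isomorphism attached to the Cartesian piece and the two triangle isomorphisms paste together, under the adjunction $(t^*,t_*)$, to reproduce the skeleton coherence of part (1); this is where the localic hypotheses on $\Hbb_1$, $\Hbb_2$, $R$, and on $(\boxtimes_1,m_1)$, $(\boxtimes_2,m_2)$ are all used simultaneously. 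Once this compatibility is in place, the reduction to the morphism core statement \Cref{prop_mor-core} becomes a matter of threading the second tensor variable through the argument.
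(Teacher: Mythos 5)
Your treatment of part (1) is essentially the paper's: the underlying isomorphisms $\rho_{S_1,S_2}$ are indexed by pairs of objects, so determinacy is immediate, and the only thing to verify is the coherence condition (mor-ETS) for an arbitrary pair $(f_1,f_2)$, which the paper checks exactly as you propose, by factoring $f_i = p_i \circ t_i$ and pasting the (mor-ETS) diagrams for $(p_1,p_2)$ and $(t_1,t_2)$. One correction, though: since coherence is a property of the already-given family $\rho$ and the diagram to be checked does not mention the factorization, your step ``independence of the chosen factorization'' is vacuous, and --- more importantly --- the definition of an external tensor skeleton on $R$ (\Cref{defn:ETSskel-mor}) contains \emph{no} exchange condition on squares of the form \eqref{intro:square-PLSQ}: the only requirement is that $\rho^{sm}$ and $\rho^{cl}$ have the same underlying natural isomorphisms. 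The square condition you invoke belongs to the skeleton of the ambient tensor structures $(\boxtimes_j,m_j)$ and to the $\Scal$-skeleton of $\theta$, not to $\rho$.

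This misidentification becomes a genuine problem in part (2). You propose to rerun the argument of \Cref{prop_mor-core} --- decomposing a square into a Cartesian square and two triangles and checking that the resulting exchange isomorphisms are mutually coherent --- and you flag this as the main difficulty. But the external tensor core on $R$ (\Cref{defn:ETCore-rho}) again imposes no square condition, so there is nothing of that kind to verify; the obstacle you anticipate does not arise. The actual content of \Cref{thm:ETScore-mor} is the purely formal \Cref{lem:rho-adj}: given the invertibility of the natural transformations $\bar{\theta}$ and $\bar{m}$ along closed immersions (which is where the localic hypotheses enter, via \Cref{lem_Scl_Scl-op} and its tensor analogue --- no Cartesian-square decomposition is needed at this stage), the condition (mor-ETS) for a pair of closed immersions expressed with $t^*$, $m^{cl}$, $\theta^{cl}$ is equivalent, by a unit/counit diagram chase, to the same condition expressed with $t_*$, $\bar{m}^{cl}$, $\bar{\theta}^{cl}$. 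With that in hand the theorem reduces in two lines to part (1). Your plan, taken literally, would have you proving lemmas about base change between $t_*$ and $p^*$ that are irrelevant to the statement, while leaving the one step that actually carries the weight --- the adjunction transport of the (mor-ETS) diagram --- unexamined.
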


Along the way, in order to complete our main results on monoidal structures, we discuss how the natural coherence conditions of associativity and commutativity constraints for the external tensor product (as formulated in \cite[\S\S~3, 4]{Ter24Fib}) can be also checked over the subcategories $\Scal^{sm}$ and $\Scal^{cl}$. On the other hand, we do not include unit constraints into the picture, because they are not adapted to the setting of external tensor cores but only to that of external tensor skeleta.

\subsection*{Structure of the paper}

Throughout the paper, we work over a fixed small category $\Scal$; at the beginning of each section, we specify the natural conditions that $\Scal$ has to satisfy in order to make our constructions possible. 

\Cref{sect:adj-fib-cat} is mostly devoted to recalling the general conventions and notation that we employ in the sequel: in particular, we review the notions of $\Scal$-fibered categories (and morphisms thereof) and external tensor structures (both on single $\Scal$-fibered categories and on morphisms of such). We also discuss the natural notions of left-adjointability and right-adjointability in each setting (\Cref{defn:Sfib-adj}, \Cref{defn:mor-Sfib-adj} and \Cref{defn:ETS-adj}), and we prove some preliminary results about these (\Cref{lem_H-adjoint}, \Cref{lem_R-adjoint} and \Cref{lem:ETS-adj}).

Starting from \Cref{sect_mor-skel}, we assume to be given two distinguished subcategories $\Scal^{sm}$ and $\Scal^{cl}$ of $\Scal$ mimicking the usual properties of closed immersions and smooth morphisms of quasi-projective algebraic varieties (\Cref{hyp_skel}). In \Cref{sect_mor-skel} we introduce the notion of \textit{$\Scal$-skeleton} of a morphism of $\Scal$-fibered categories (\Cref{defn:skel}), and we show that giving a morphism of $\Scal$-fibered categories is equivalent to giving the underlying $\Scal$-skeleton (\Cref{prop_mor-skel}).

In \Cref{sect_mor-core} we work under a slightly more specific set of assumptions on the base category $\Scal$ and on its subcategories $\Scal^{sm}$ and $\Scal^{cl}$ (\Cref{hyp_core}). In the first place, we introduce the notion of \textit{localic} $\Scal$-fibered category (\Cref{defn_Hlocal}) and prove their basic properties (\Cref{lem_Hlocal}). We then introduce the notion of \textit{$\Scal$-core} for localic morphisms between localic $\Scal$-fibered categories (\Cref{defn:core}), and we show that giving such a morphism is equivalent to giving the underlying $\Scal$-core (\Cref{prop_mor-core}). 

In \Cref{sect_ETSkel} we go back to the general setting considered in \Cref{sect_mor-skel}; in order to consider external tensor structures on $\Scal$-fibered categories, we assume that $\Scal$ admits binary products. Following the same route as in \Cref{sect_mor-skel}, we introduce the notion of \textit{external tensor skeleton} on an $\Scal$-fibered category (\Cref{defn:ETSkel}), and we show that giving an external tensor structure is equivalent to giving the underlying external tensor skeleton (\Cref{thm_ETSskel}); a similar result applies to external tensor structures on morphisms of $\Scal$-fibered categories (\Cref{prop_mor_ETSkel}).

Finally, in \Cref{sect_ETScores} we reinforce again the assumptions on $\Scal$ as done in \Cref{sect_mor-core}, and we only consider localic $\Scal$-fibered categories; as in \Cref{sect_ETSkel}, we assume that $\Scal$ admits binary products. Following the same route as in \Cref{sect_mor-core}, we introduce the notion of \textit{external tensor core} on a localic $\Scal$-fibered category $\Hbb$ (\Cref{defn:ETCore}), and we show that giving a localic external tensor structure on $\Hbb$ is equivalent to giving the underlying external tensor core (\Cref{thm_ETScore}); a similar result applies to external tensor structures on localic morphisms of localic $\Scal$-fibered categories (\Cref{thm:ETScore-mor}).

\subsection*{Acknowledgments}

The contents of this paper correspond to the second chapter of my Ph.D. thesis, written at the University of Freiburg under the supervision of Annette Huber-Klawitter. It is a pleasure to thank her for many useful discussions around the subject of this article, as well as for her constant support and encouragement.

\section*{Notation and conventions}

\begin{itemize}
	\item Unless otherwise dictated, categories are assumed to be small with respect to some fixed universe.
	\item Given a category $\Ccal$, the notation $C \in \Ccal$ means that $C$ is an object of $\Ccal$.
	\item Given categories $\Ccal_1, \dots, \Ccal_n$, we let $\Ccal_1 \times \cdots \times \Ccal_n$ denote their direct product category.
\end{itemize}

\section{Adjointability of fibered categories and tensor structures}\label{sect:adj-fib-cat}

Throughout this paper, we work over a fixed small category $\Scal$; for the moment $\Scal$ could be completely general, except in the third part where we need to assume that $\Scal$ admits fibered products.

The goal of the first section is twofold: on the one side, we recall our basic conventions and notation about $\Scal$-fibered categories and external tensor structures from \cite[\S\S~1, 2]{Ter24Fib}; on the other side, we discuss various notions of adjointability and collect some basic results on that. To avoid possible confusion with the notation of the following sections, here we fix a (possibly non-full) subcategory $\Scal'$ of $\Scal$.

\subsection{Adjointable fibered categories}

As done in \cite[\S~1]{Ter24Fib}, we follow the conventions on fibered categories used in \cite{Ayo07a,Ayo07b} and \cite{CisDeg}. In detail, we define an \textit{$\Scal$-fibered category} $\Hbb$ as the datum of
\begin{itemize}
	\item for every $S \in \Scal$, a category $\Hbb(S)$,
	\item for every morphism $f: T \rightarrow S$ in $\Scal$, a functor 
	\begin{equation*}
		f^*: \Hbb(S) \rightarrow \Hbb(T),
	\end{equation*}
    called the \textit{inverse image functor along $f$},
	\item for every pair of composable morphisms $f: T \rightarrow S$, $g: S \rightarrow V$, a natural isomorphism of functors $\Hbb(V) \rightarrow \Hbb(T)$
	\begin{equation*}
		\conn = \conn_{f,g}: (gf)^* A \xrightarrow{\sim} f^* g^* A
	\end{equation*}
	called the \textit{connection isomorphism} at $(f,g)$
\end{itemize}
such that the following conditions are satisfied:
\begin{enumerate}
	\item[($\Scal$-fib-0)] For every $S \in \Scal$, we have $\id_S^* = \id_{\Hbb(S)}$.
	\item[($\Scal$-fib-1)] For every triple of composable morphisms $f: T \rightarrow S$, $g: S \rightarrow V$, $h: V \rightarrow W$ in $\Scal$, the diagram of functors $\Hbb(W) \rightarrow \Hbb(T)$
	\begin{equation*}
		\begin{tikzcd}
			(hgf)^* A \arrow{rr}{\conn_{f,hg}} \arrow{d}{\conn_{gf,h}} && f^* (hg)^* \arrow{d}{\conn_{g,h}} A \\
			(gf)^* h^* A \arrow{rr}{\conn_{f,g}} && f^* g^* h^* A
		\end{tikzcd}
	\end{equation*}
	is commutative.
\end{enumerate}

\begin{rem}
	\begin{enumerate}
		\item The strict unitarity of axiom ($\Scal$-fib-0) is mostly a matter of notational convenience; it is not a severe requirement (see for example \cite[Lemma~2.5]{DelVoe}). 
		\item As explained in \cite[Lemma~1.3]{Ter24Fib}, axiom ($\Scal$-fib-1) is designed in such a way that all composites of connection isomorphisms (and inverses thereof) are coherent; in practice, this means that one can treat connection isomorphisms as equalities. For sake of clarity, we write connection isomorphisms explicitly in the rest of this section; starting from \Cref{sect_mor-skel}, we will systematically write them as equalities in order to lighten our notation.
	\end{enumerate}
\end{rem}

Note that any $\Scal$-fibered category defines by restriction an $\Scal'$-fibered category.

Here is the first adjointability notion that we need to discuss:

\begin{defn}\label{defn:Sfib-adj}
	We say that an $\Scal$-fibered category $\Hbb$ is \textit{left-$\Scal'$-adjointable} (resp. \textit{right-$\Scal'$-adjointable}) if, for  every arrow $f: T \rightarrow S$ in $\Scal'$, the functor $f^*: \Hbb(S) \rightarrow \Hbb(T)$ admits a left adjoint $f_{\#}$ (resp. a right adjoint $f_*$). 
	For sake of brevity, in the case where $\Scal' = \Scal$ we simply say "left-adjointable" (resp. "right-adjointable") instead. 
\end{defn}

\begin{rem}\label{rem:adj-id}
	Of course, in view of axiom ($\Scal$-fib-0), for every $S \in \Scal'$ the inverse image functor $\id_S^* = \id_{\Hbb(S)}$ has a canonical left and right adjoint given by $\id_{\Hbb(S)}$ itself. We will always implicitly choose this as the left and right adjoint for the functor $\id_S^*$.
\end{rem}

The following result explains how the adjoints of the inverse image functors naturally assemble into an ${\Scal'}^{op}$-fibered category:

\begin{lem}\label{lem_H-adjoint}
	Let $\Hbb$ an $\Scal$-fibered category.
	\begin{enumerate}
		\item Suppose that $\Hbb$ is left-$\Scal'$-adjointable. Then, associating
		\begin{itemize}
			\item to every morphism $f: T \rightarrow S$ in $\Scal'$, the functor
			\begin{equation*}
				f_{\#}: \Hbb(T) \rightarrow \Hbb(S),
			\end{equation*}
			\item to every pair of composable morphisms $f: T \rightarrow S$ and $g: S \rightarrow V$ in $\Scal'$, the natural isomorphism of functors $\Hbb(T) \rightarrow \Hbb(V)$
			\begin{equation*}
				\begin{tikzcd}[font=\small]
					\overline{\conn} = \overline{\conn}_{f,g}: & (gf)_{\#} A \arrow{d}{\eta} \\
					& (gf)_{\#} f^* f_{\#} A \arrow{d}{\eta} \\
					& (gf)_{\#} f^* g^* g_{\#} f_{\#} A & (gf)_{\#} (gf)^* g_{\#} f_{\#} A \arrow{l}{\conn_{f,g}} \arrow{r}{\epsilon} & g_{\#} f_{\#} A
				\end{tikzcd}
			\end{equation*}
		\end{itemize}
		makes $\Hbb$ into an ${\Scal'}^{op}$-fibered category.
		\item Suppose that $\Hbb$ is right-adjointable. Then, associating
		\begin{itemize}
			\item to every morphism $f: T \rightarrow S$ in $\Scal'$, the functor
			\begin{equation*}
				f_*: \Hbb(T) \rightarrow \Hbb(S),
			\end{equation*}
			\item to every pair of composable morphisms $f: T \rightarrow S$ and $g: S \rightarrow V$ in $\Scal'$, the natural isomorphism of functors $\Hbb(T) \rightarrow \Hbb(V)$
			\begin{equation*}
				\begin{tikzcd}[font=\small]
					\overline{\conn} = \overline{\conn}_{f,g}: & (gf)_* A \arrow{d}{\eta} \\
					& g_* g^* (gf)_* A \arrow{d}{\eta} \\
					& g_* f_* f^* g^* (gf)_* A &  g_* f_* (gf)^* (gf)_* A \arrow{l}{\conn_{f,g}} \arrow{r}{\epsilon} & g_* f^* A
				\end{tikzcd}
			\end{equation*}
		\end{itemize}
		makes $\Hbb$ into an ${\Scal'}^{op}$-fibered category.
	\end{enumerate}
\end{lem}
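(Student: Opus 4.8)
The plan is to establish \Cref{lem_H-adjoint} by verifying the two fibered-category axioms ($\Scal'^{op}$-fib-0) and ($\Scal'^{op}$-fib-1) for the proposed data, treating the two parts in parallel since they are formally dual (replace left adjoints by right adjoints and reverse the relevant unit/counit roles). First I would check the strict unitarity axiom: for $f = \id_S$, by \Cref{rem:adj-id} we have $\id_S^* = \id_{\Hbb(S)}$ with canonical adjoint $\id_{\Hbb(S)}$, so all the unit $\eta$ and counit $\epsilon$ morphisms appearing in the definition of $\overline{\conn}_{\id_S,g}$ (and of $\overline{\conn}_{f,\id_S}$) collapse to identities; combined with ($\Scal$-fib-0) for the connection isomorphisms of $\Hbb$, this forces $\overline{\conn}_{\id_S,g} = \id$ and $\overline{\conn}_{f,\id_S} = \id$, which is exactly what unitarity demands. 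This step is essentially bookkeeping once one observes that the triangle identities make the composite of a unit followed by the matching counit equal to the identity.

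The substantive step is the cocycle condition ($\Scal'^{op}$-fib-1): for a triple $f,g,h$ of composable morphisms in $\Scal'$, I must show the square relating $\overline{\conn}_{f,hg}$, $\overline{\conn}_{gf,h}$, $\overline{\conn}_{f,g}$ and $\overline{\conn}_{g,h}$ commutes. My approach would be to expand each $\overline{\conn}$ into its defining zig-zag of units, counits, and connection isomorphisms $\conn$, so that both paths around the square become long composites of such $2$-cells. The key technical tools are the naturality of $\eta$ and $\epsilon$, the triangle identities for the adjunctions $(f_{\#},f^*)$ etc., and crucially the coherence of connection isomorphisms recorded in ($\Scal$-fib-1) — as the remark emphasizes, this lets one treat all composites of $\conn$'s as equalities, so I can freely rebracket the $(hgf)^*$-towers. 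The strategy is then to insert matching unit-counit cancellations so that both composites reduce to the same canonical morphism $(hgf)_{\#}A \to h_{\#}g_{\#}f_{\#}A$ built from one unit per morphism, one instance of the full connection isomorphism, and one counit per morphism.

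The main obstacle I anticipate is the sheer combinatorial length of the diagram chase: each $\overline{\conn}$ already involves three stages, so the composite defining either side of the cocycle square unfolds into a diagram with on the order of a dozen $2$-cells, and the two paths interleave the units/counits for $f$, $g$, $h$ in different orders. The real work is to organize these cancellations cleanly rather than to discover any genuinely new identity — there is no hidden hypothesis beyond the adjunctions and ($\Scal$-fib-1). I would manage this by proving a short intermediate lemma giving a \emph{normal form} for $\overline{\conn}$: namely, that $\overline{\conn}_{f,g}$ equals the mate (in the sense of adjunction transposition) of $\conn_{f,g}$ under the adjunctions $((gf)_{\#},(gf)^*)$ on the source and $(g_{\#}f_{\#},f^*g^*)$ on the target. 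Once $\overline{\conn}$ is identified as such a mate, the cocycle condition for $\overline{\conn}$ follows from the cocycle condition ($\Scal$-fib-1) for $\conn$ by the general and well-known compatibility of mate-formation with horizontal and vertical pasting of adjunctions, which sidesteps the explicit $2$-cell chase entirely. For the right-adjointable case, the identical argument applies with mates taken under the right adjunctions $(f^*,f_*)$, yielding the dual normal form and hence the dual cocycle identity.
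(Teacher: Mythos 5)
Your proposal is correct, but the route you take for the cocycle condition is genuinely different from the paper's. The paper proves (${\Scal'}^{op}$-fib-1) by brute force: it expands both composites of $\overline{\conn}$'s into the full zig-zag of units, counits and connection isomorphisms, and then exhibits an explicit decomposition of the resulting large diagram into a central triangle (commutative by ($\Scal$-fib-1)) surrounded by pieces commutative by naturality and by construction --- exactly the ``dozen-cell chase'' you were hoping to avoid. Your alternative is to observe that $\overline{\conn}_{f,g}$ is by definition the mate of $\conn_{f,g}^{-1}\colon f^*g^* \to (gf)^*$ under the adjunctions $((gf)_{\#},(gf)^*)$ and $(g_{\#}f_{\#},f^*g^*)$ (note it is the mate of the \emph{inverse} of $\conn_{f,g}$, not of $\conn_{f,g}$ itself, a small imprecision in your write-up), and then to invoke the standard compatibility of mate-formation with vertical and horizontal pasting to transport ($\Scal$-fib-1) directly to ($\Scal'^{op}$-fib-1). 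This is a legitimate and cleaner argument; what it buys is conceptual economy and reusability (the same mate calculus would dispatch \Cref{lem_R-adjoint} and \Cref{lem:ETS-adj} as well), at the cost of importing the $2$-categorical mates machinery, which the paper deliberately avoids in favour of self-contained elementary diagram chases. Your treatment of (${\Scal'}^{op}$-fib-0) is also slightly more than is needed: the paper only requires $(\id_S)_{\#}=\id_{\Hbb(S)}$, which holds by the convention of \Cref{rem:adj-id}; the identities $\overline{\conn}_{\id,g}=\id$ and $\overline{\conn}_{f,\id}=\id$ that you verify are not part of the axiom as stated.
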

\begin{proof}
	We only treat the case where $\Hbb$ is left-adjointable; the other case is analogous.
	
	Since condition (${\Scal'}^{op}$-fib-0) holds by hypothesis (see \Cref{rem:adj-id}), we only need to check that the natural isomorphisms in the statement satisfy condition (${\Scal'}^{op}$-fib-1): given three composable morphisms $f: T \rightarrow S$, $g: S \rightarrow V$ and $h: V \rightarrow W$ in $\Scal'$, we have to show that the diagram of functors $\Hbb(T) \rightarrow \Hbb(W)$
	\begin{equation*}
		\begin{tikzcd}
			(hgf)_{\#} A \arrow{r}{\overline{\conn}_{f,hg}} \arrow{d}{\overline{\conn}_{gf,h}} & (hg)_{\#} f_{\#} A \arrow{d}{\overline{\conn}_{g,h}} \\
			h_{\#} (gf)_{\#} A \arrow{r}{\overline{\conn}_{g,f}} & h_{\#} g_{\#} f_{\#} A
		\end{tikzcd}
	\end{equation*}
	is commutative. Expanding the various definitions, we obtain the more explicit diagram
	\begin{equation*}
		\begin{tikzcd}[font=\tiny]
			(hgf)_{\#} A \arrow{r}{\eta} \arrow{d}{\eta} & (hgf)_{\#} f^* f_{\#} A \arrow{r}{\eta} & (hgf)_{\#} f^* (hg)^* (hg)_{\#} f_{\#} A & (hgf)_{\#} (hgf)^* (hg)_{\#} f_{\#} A \arrow{l}{\conn_{f,hg}} \arrow{r}{\epsilon} & (hg)_{\#} f_{\#} A \arrow{dddd}{\overline{\conn}_{g,h}} \\
			(hgf)_{\#} (gf)^* (gf)_{\#} A \arrow{d}{\eta} \\
			(hgf)_{\#} (gf)^* h^* h_{\#} (gf)_{\#} A  \\
			(hgf)_{\#} (hgf)^* h_{\#} (gf)_{\#} A \arrow{u}{\conn_{gf,h}} \arrow{d}{\epsilon} \\
			h_{\#} (gf)_{\#} A \arrow{rrrr}{\overline{\conn}_{f,g}} &&&& h_{\#} g_{\#} f_{\#} A
		\end{tikzcd}
	\end{equation*} 
    that we decompose as
    \begin{equation*}
	    \begin{tikzcd}[font=\tiny]
	    	\bullet \arrow{rr}{\eta} \arrow{d}{\eta} && \bullet \arrow{r}{\eta} \arrow{d}{\eta} & \bullet \arrow{dd}{\overline{\conn}_{g,h}} & \bullet \arrow{l}{\conn_{f,hg}} \arrow{r}{\epsilon} \arrow[bend left=50]{dddl}{\overline{\conn}_{g,h}} & \bullet \arrow{dddd}{\overline{\conn}_{g,h}} \\
	    	\bullet \arrow{d}{\eta} \arrow{r}{\overline{\conn}_{f,g}} & (hgf)_{\#} (gf)^* g_{\#} f_{\#} A \arrow{r}{\conn_{f,g}} \arrow{d}{\eta} & (hgf)_{\#} f^* g^* g_{\#} f_{\#} A \arrow{d}{\eta}  \\
	    	\bullet  \arrow{r}{\overline{\conn}_{f,g}} & (hgf)_{\#} (gf)^* h^* h_{\#} g_{\#} f_{\#} A \arrow{r}{\conn_{f,g}} & (hgf)_{\#} f^* g^* h^* h_{\#} g_{\#} f_{\#} A & (hgf)_{\#} f^* (hg)^* h_{\#} g_{\#} f_{\#} A \arrow{l}{\conn_{g,h}} \\
	    	\bullet \arrow{u}{\conn_{gf,h}} \arrow{d}{\epsilon} \arrow{rrr}{\overline{\conn}_{f,g}} &&& (hgf)_{\#} (hgf)^* h_{\#} g_{\#} f_{\#} A \arrow{drr}{\epsilon} \arrow{u}{\conn_{f,hg}} \arrow{ull}{\conn_{gf,h}} \\
	    	\bullet \arrow{rrrrr}{\overline{\conn}_{f,g}} &&&&& \bullet
	    \end{tikzcd}
    \end{equation*}
    Here, the central triangle is commutative by axiom ($\Scal$-fib-1) while the other pieces are commutative by naturality and by construction. This proves the claim.
\end{proof}

\subsection{Adjointable morphisms of fibered categories}

We go on by reviewing morphisms of $\Scal$-fibered categories. Let $\Hbb_1$ and $\Hbb_2$ be two $\Scal$-fibered categories; for every composable pair of morphisms $f: T \rightarrow S$ and $g: S \rightarrow V$ in $\Scal$, we write the corresponding connection isomorphisms for $\Hbb_1$ and $\Hbb_2$ as $\conn_{f,g}^{(1)}$ and $\conn_{f,g}^{(2)}$, respectively.
As done in \cite[\S~1]{Ter24Fib}, we follow the conventions used in \cite{Ayo10} and \cite{CisDeg}. We say that a morphism of $\Scal$-fibered categories $R: \Hbb_1 \rightarrow \Hbb_2$ is the datum of
\begin{itemize}
	\item for every $S \in \Scal$, a functor $R_S: \Hbb_1(S) \rightarrow \Hbb_2(S)$,
	\item for every morphism $f: T \rightarrow S$ in $\Scal$, a natural isomorphism of functors $\Hbb_1(S) \rightarrow \Hbb_2(T)$
	\begin{equation*}
		\theta = \theta_f: f^* R_S(A) \xrightarrow{\sim} R_T (f^* A),
	\end{equation*}
	called the \textit{$R$-transition isomorphism} along $f$
\end{itemize}
such that the following condition is satisfied:
\begin{enumerate}
	\item[(mor-$\Scal$-fib)] For every pair of composable morphisms $f: T \rightarrow S$ and $g: S \rightarrow V$, the diagram of functors $\Hbb_1(V) \rightarrow \Hbb_2(T)$
	\begin{equation*}
		\begin{tikzcd}
			(gf)^* R_V(A) \arrow{rr}{\theta_{gf}} \arrow{d}{\conn^{(2)}_{f,g}} && R_T ((gf)^* A) \arrow{d}{\conn^{(1)}_{f,g}} \\
			f^* g^* R_V(A) \arrow{r}{\theta_g} & f^* R_S (g^* A) \arrow{r}{\theta_f} & R_T (f^* g^* A)
		\end{tikzcd}
	\end{equation*}
	is commutative.
\end{enumerate}
For sake of clarity, it will be convenient to write a morphism $R: \Hbb_1 \rightarrow \Hbb_2$ as a pair $(\Rcal,\theta)$ consisting of 
\begin{itemize}
	\item a family of functors $\Rcal := \left\{R_S: \Hbb_1(S) \rightarrow \Hbb_2(S) \right\}_{S \in \Scal}$,
	\item a system of natural isomorphisms $\theta = \left\{\theta_f: f^* \circ R_S \xrightarrow{\sim} R_T \circ f^* \right\}_{f: T \rightarrow S}$ satisfying condition (mor-$\Scal$-fib) above. 
\end{itemize}
If we are already given a family $\Rcal$ as above, we say that such a system of natural isomorphisms $\theta$ defines an \textit{$\Scal$-structure} on $\Rcal$.

\begin{rem}\label{rem:theta}
	As explained in \cite[Lemma~1.8]{Ter24Fib}, axiom (mor-$\Scal$-fib) is designed in such a way that all composites of connection isomorphisms and $R$-transition isomorphisms (and inverses thereof) are coherent. 
\end{rem}

Note that any morphism of $\Scal$-fibered categories determines by restriction a morphism of $\Scal'$-fibered categories. 

The notion of adjointability for $\Scal$-fibered categories motivates a similar notion for morphisms:

\begin{defn}\label{defn:mor-Sfib-adj}
	Let $\Hbb_1$ and $\Hbb_2$ be two left-$\Scal'$-adjointable (resp. right-$\Scal'$-adjointable) $\Scal$-fibered categories.
	\begin{enumerate}
		\item We say that a morphism of $\Scal$-fibered categories $R = (\Rcal,\theta): \Hbb_1 \rightarrow \Hbb_2$ is \textit{left-$\Scal'$-adjointable} (resp. \textit{right-$\Scal'$-adjointable}) if, for every morphism $f: T \rightarrow S$ in $\Scal'$, the natural transformation of functors $\Hbb_1(T) \rightarrow \Hbb_2(S)$
		\begin{equation}\label{theta'-l}
			f_{\#} R_T(A) \xrightarrow{\eta} f_{\#} R_T(f^* f_{\#} A) \xleftarrow{\theta_f} f_{\#} f^* R_S(f_{\#} A) \xrightarrow{\epsilon} R_S(f_{\#} A)
		\end{equation}
		is invertible (resp. the natural transformation of functors $\Hbb_1(T) \rightarrow \Hbb_2(S)$ 
		\begin{equation}\label{theta'-r}
			R_T(f_* A) \xrightarrow{\eta} f_* f^* R_T(f_* A) \xrightarrow{\theta_f} f_* R_S(f^* f_* A) \xrightarrow{\epsilon} f_* R_S(A)
		\end{equation}
		is invertible). In this case, we let $\bar{\theta} = \bar{\theta}_f$ denote the natural transformation \eqref{theta'-l} (resp. \eqref{theta'-r}).
		\item If we are already given a family of functors $\Rcal := \left\{R_S: \Hbb_1(S) \rightarrow \Hbb_2(S)\right\}_{S \in \Scal}$, we say that an $\Scal$-structure $\theta$ on $\Rcal$ is \textit{left-$\Scal'$-adjointable} (resp. \textit{right-$\Scal'$-adjointable}) if so is the resulting morphism of $\Scal$-fibered categories $(\Rcal,\theta): \Hbb_1 \rightarrow \Hbb_2$.
	\end{enumerate}
\end{defn}

\begin{lem}\label{lem_R-adjoint}
	Let $R = (\Rcal,\theta): \Hbb_1 \rightarrow \Hbb_2$ be a left-$\Scal'$-adjointable (resp. right-$\Scal'$-adjointable) morphism of left-adjointable (resp. right-adjointable) $\Scal$-fibered categories.
	Then the natural isomorphisms \eqref{theta'-l} (resp. the inverses of the natural isomorphisms \eqref{theta'-r}) define a morphism of ${\Scal'}^{op}$-fibered categories $(\Rcal,\bar{\theta}): \Hbb_1 \rightarrow \Hbb_2$.
\end{lem}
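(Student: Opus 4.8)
The plan is to check directly that the pair $(\Rcal,\bar\theta)$ is a morphism of ${\Scal'}^{op}$-fibered categories, where the ${\Scal'}^{op}$-fibered structures on $\Hbb_1$ and $\Hbb_2$ are the ones produced by \Cref{lem_H-adjoint} (inverse images $f_{\#}$, connection isomorphisms $\overline{\conn}$). In the left-$\Scal'$-adjointable case the family $\Rcal$ is unchanged, and each $\bar\theta_f$ — given by \eqref{theta'-l} — is a natural isomorphism by the very definition of left-$\Scal'$-adjointability of $R$. Since the morphism axioms consist only of the composition condition, the whole content is to verify, for composable $f:T\to S$ and $g:S\to V$ in $\Scal'$, the commutativity of the square of functors $\Hbb_1(T)\to\Hbb_2(V)$
\[
\begin{tikzcd}
(gf)_{\#} R_T A \arrow{rr}{\bar\theta_{gf}} \arrow{d}[swap]{\overline{\conn}^{(2)}_{f,g}} && R_V((gf)_{\#} A) \arrow{d}{\overline{\conn}^{(1)}_{f,g}} \\
g_{\#} f_{\#} R_T A \arrow{r}{\bar\theta_f} & g_{\#} R_S(f_{\#} A) \arrow{r}{\bar\theta_g} & R_V(g_{\#} f_{\#} A),
\end{tikzcd}
\]
where the left vertical uses $\overline{\conn}^{(2)}$ (the connection isomorphism of $\Hbb_2$, since $(gf)_{\#}$ is applied after $R_T$ inside $\Hbb_2$) and the right vertical uses $\overline{\conn}^{(1)}$.

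The conceptual reason this holds is that $\bar\theta$ and $\overline{\conn}$ arise from $\theta$ and $\conn$ by the calculus of mates along the vertical adjunctions $f_{\#}\dashv f^*$: explicitly, $\bar\theta_f$ is the mate of $\theta_f$, while $\overline{\conn}_{f,g}$ is the inverse of the mate of $\conn_{f,g}$. The axiom (mor-$\Scal$-fib) for $\theta$ is an equality of two pastings built from $\theta_{gf},\theta_f,\theta_g$ and the $\conn$'s; because passage to vertical mates is compatible with horizontal composition of the functors $R_S$ (which are left untouched, and are not required to have adjoints) and with vertical pasting of inverse-image functors, this equality transports to the square above. The right-$\Scal'$-adjointable case is the exact dual, which is precisely why one must use the \emph{inverses} of \eqref{theta'-r} there.

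Concretely — and in the style of the proof of \Cref{lem_H-adjoint} — the square can be verified by a direct chase. The idea is to expand $\bar\theta_{gf}$, $\bar\theta_f$, $\bar\theta_g$, $\overline{\conn}^{(1)}_{f,g}$ and $\overline{\conn}^{(2)}_{f,g}$ into their defining zigzags of units $\eta$, counits $\epsilon$, connection isomorphisms and transition isomorphisms $\theta$; to insert the units and counits needed to glue the pieces together; and then to subdivide the resulting large diagram into elementary cells. Each cell will commute by naturality of $\eta$, $\epsilon$, $\conn$ or $\theta$, by a triangle identity, or by axiom ($\Scal$-fib-1) for $\Hbb_1$ or $\Hbb_2$ — except for a single cell, which is an instance of axiom (mor-$\Scal$-fib) for $\theta$ and carries the essential content.

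The main obstacle is organizational rather than conceptual: laying out the expanded diagram so that the unique application of (mor-$\Scal$-fib) is cleanly isolated while every other cell is manifestly a naturality square or a triangle identity, and keeping track of variance throughout — which adjunction contributes each $\eta$ or $\epsilon$, and whether $\conn^{(1)}$ or $\conn^{(2)}$ appears. The coherence afforded by \Cref{rem:theta}, which lets one manipulate all composites of $\theta$'s and $\conn$'s as equalities, is what keeps this bookkeeping manageable; with it in hand, the right-adjointable statement follows by the dual chase and needs no separate verification.
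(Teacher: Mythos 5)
Your proposal is correct and follows essentially the same route as the paper: expand $\bar\theta_{gf}$, $\bar\theta_f$, $\bar\theta_g$ and the $\overline{\conn}$'s into their defining zigzags of units, counits, $\conn$'s and $\theta$'s, subdivide the resulting diagram into cells, and observe that exactly one cell is an instance of (mor-$\Scal$-fib) while all others commute by naturality, the triangle identities, and the construction of \Cref{lem_H-adjoint}. The paper carries out precisely this chase (with the central rectangle being the (mor-$\Scal$-fib) cell) and likewise dispatches the right-adjointable case as the dual; your mates-calculus framing is a valid conceptual summary of the same argument.
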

\begin{proof}
	We only treat the left-adjointable case; the other case is analogous.
	
	We need to check that the natural isomorphisms in the statement satisfy condition (mor-${\Scal'}^{op}$-fib): given two composable morphisms $f: T \rightarrow S$ and $g: S \rightarrow V$ in $\Scal'$, we have to show that the diagram of functors $\Hbb_1(T) \rightarrow \Hbb_2(V)$
	\begin{equation*}
		\begin{tikzcd}
			(gf)_{\#} R_T(A) \arrow{rr}{\bar{\theta}_{gf}} \arrow[equal]{d} && R_V((gf)_{\#} A) \arrow[equal]{d} \\
			g_{\#} f_{\#} R_T(A) \arrow{r}{\bar{\theta}_f} & g_{\#} R_S(f_{\#} A) \arrow{r}{\bar{\theta}_g} & R_V(g_{\#} f_{\#} A)
		\end{tikzcd}
	\end{equation*}
	is commutative. Expanding the definitions of the horizontal arrows, we obtain the more explicit diagram
	\begin{equation*}
	    \begin{tikzcd}[font=\tiny]
	    	(gf)_{\#} R_T(A) \arrow{d}{\overline{\conn}^{(2)}} \arrow{r}{\eta} & (gf)_{\#} R_T((gf)^* (gf)_{\#} A) && (gf)_{\#} (gf)^* R_V((gf)_{\#} A) \arrow{ll}{\theta} \arrow{r}{\epsilon} & R_V((gf)_{\#} A) \arrow{d}{\overline{\conn}^{(1)}} \\
	    	g_{\#} f_{\#} R_T(A) \arrow{d}{\eta} &&&& R_V(g_{\#} f_{\#} A) \\
	    	g_{\#} f_{\#} R_T(f^* f_{\#} A) & g_{\#} f_{\#} f^* R_S(f_{\#} A) \arrow{l}{\theta} \arrow{r}{\epsilon} & g_{\#} R_S(f_{\#} A) \arrow{r}{\eta} & g_{\#} R_S(g^* g_{\#} f_{\#} A) \arrow{r}{\theta} & g_{\#} g^* R_V(g_{\#} f_{\#} A) \arrow{u}{\epsilon} \\
		\end{tikzcd}
	\end{equation*}
    that we decompose as
    \begin{equation*}
    	\begin{tikzcd}[font=\small]
    		\bullet \arrow{d}{\overline{\conn}^{(2)}} \arrow{r}{\eta} & \bullet \arrow{d}{\overline{\conn}^{(1)}} \arrow{dr}{\overline{\conn}^{(1)}} && \bullet \arrow{ll}{\theta} \arrow{d}{\overline{\conn}^{(1)}} \arrow{r}{\epsilon} & \bullet \arrow{d}{\overline{\conn}^{(1)}} \\
    		\bullet \arrow{r}{\eta} \arrow{ddd}{\eta} & g_{\#} f_{\#} R_T((gf)^* (gf)_{\#} A) \arrow{d}{\overline{\conn}^{(1)}} & (gf)_{\#} R_T((gf)^* g_{\#} f_{\#} A) \arrow{dl}{\overline{\conn}^{(2)}} & (gf)_{\#} (gf)^* R_V(g_{\#} f_{\#} A) \arrow{l}{\theta} \arrow{r}{\epsilon} \arrow{d}{\overline{\conn}^{(2)}} & \bullet \\
    		& g_{\#} f_{\#} R_T((gf)^* g_{\#} f_{\#} A) \arrow[equal]{d} && g_{\#} f_{\#} (gf)^* R_V(g_{\#} f_{\#} A) \arrow{ll}{\theta} \arrow[equal]{d} \\
    		 & g_{\#} f_{\#} R_T(f^* g^* g_{\#} f_{\#} A) & g_{\#} f_{\#} f^* R_S(g^* g_{\#} f_{\#} A) \arrow{l}{\theta} \arrow{dr}{\epsilon} & g_{\#} f_{\#} f^* g^* R_V(g_{\#} f_{\#} A) \arrow{l}{\theta} \arrow{dr}{\epsilon} \\
    		\bullet \arrow{ur}{\eta} & \bullet \arrow{l}{\theta} \arrow{ur}{\eta} \arrow{r}{\epsilon} & \bullet \arrow{r}{\eta} & \bullet \arrow{r}{\theta} & \bullet \arrow{uuu}{\epsilon}
    	\end{tikzcd}
    \end{equation*}
    Here, the central rectangle is commutative by axiom (mor-$\Scal$-fib) while the other pieces are commutative by naturality and by construction. This proves the claim.
\end{proof}

\subsection{Adjointable external tensor structures}

Following \cite[\S~2]{Ter24Fib}, we say that an \textit{external tensor structure} $(\boxtimes,m)$ on an $\Scal$-fibered category $\Hbb$ is the datum of
\begin{itemize}
	\item for every $S_1, S_2 \in \Scal$, a functor
	\begin{equation*}
		-\boxtimes- = - \boxtimes_{S_1,S_2} -: \Hbb(S_1) \times \Hbb(S_2) \rightarrow \Hbb(S_1 \times S_2),
	\end{equation*}
	called the \textit{external tensor product functor} over $(S_1,S_2)$,
	\item for every choice of morphisms $f_i: T_i \rightarrow S_i$ in $\Scal$, $i = 1,2$, a natural isomorphism of functors $\Hbb(S_1) \times \Hbb(S_2) \rightarrow \Hbb(T_1 \times T_2)$
	\begin{equation*}
		m = m_{f_1,f_2}: f_1^* A_1 \boxtimes f_2^* A_2 \xrightarrow{\sim} (f_1 \times f_2)^*(A_1 \boxtimes A_2),
	\end{equation*}
	called the \textit{external monoidality isomorphism} along $(f_1,f_2)$
\end{itemize}
satisfying the following condition:
\begin{enumerate}
	\item[($m$ETS)] For every choice of composable morphisms $f_i: T_i \rightarrow S_i$, $g_i: S_i \rightarrow V_i$, $i = 1,2$, the diagram of functors $\Hbb(V_1) \times \Hbb(V_2) \rightarrow \Hbb(T_1 \times T_2)$
	\begin{equation*}
		\begin{tikzcd}
			(g_1 f_1)^* A_1 \boxtimes (g_2 f_2)^* A_2  \arrow{rr}{m} \arrow[equal]{d} && (g_1 f_1 \times g_2 f_2)^* (A_1 \boxtimes A_2)  \arrow[equal]{d} \\
			f_1^* g_1^* A_1 \boxtimes f_2^* g_2^* A_2  \arrow{r}{m} & (f_1 \times f_2)^* (g_1^* A_1 \boxtimes g_2^* A_2) \arrow{r}{m} & (f_1 \times f_2)^* (g_1 \times g_2)^* (A_1 \boxtimes A_2)
		\end{tikzcd}
	\end{equation*}	
	is commutative.
\end{enumerate}

Note that any external tensor structure on $\Hbb$ defines by restriction an external tensor structure on the underlying $\Scal'$-fibered category.

The notion of adjointability for $\Scal$-fibered categories motivates a similar notion for external tensor structures:

\begin{defn}\label{defn:ETS-adj}
	Suppose that the $\Scal$-fibered category $\Hbb$ is left-$\Scal'$-adjointable (resp. right-$\Scal'$-adjointable), and let $(\boxtimes,m)$ be an external tensor structure on $\Hbb$.
	We say that $(\boxtimes,m)$ is \textit{left-$\Scal'$-adjointable} (resp. \textit{right-$\Scal'$-adjointable}) if, for every choice of morphisms $f_i: T_i \rightarrow S_i$ in $\Scal'$, $i = 1,2$, the natural transformation of functors $\Hbb(T_1) \times \Hbb(T_2) \rightarrow \Hbb(S_1 \times S_2)$
	\begin{equation}\label{iso:m'-left}
		\begin{tikzcd}[font=\small]
			(f_1 \times f_2)_{\#}(A_1 \boxtimes A_2) \arrow{d}{\eta} & f_{1,\#} A_1 \boxtimes f_{2,\#} A_2 \\
			(f_1 \times f_2)_{\#}(f_1^* f_{1,\#} A_1 \boxtimes f_2^* f_{2,\#} A_2) \arrow{r}{m} & (f_1 \times f_2)_{\#} (f_1 \times f_2)^* (f_{1,\#} A_1 \boxtimes f_{2,\#} A_2) \arrow{u}{\epsilon}
		\end{tikzcd}
	\end{equation}	
	is invertible (resp. the natural transformation of functors $\Hbb(T_1) \times \Hbb(T_2) \rightarrow \Hbb(S_1 \times S_2)$
	\begin{equation}\label{iso:m'-right}
		\begin{tikzcd}[font=\small]
			f_{1,*} A_1 \boxtimes f_{2,*} A_2 \arrow{d}{\eta} & (f_1 \times f_2)_* (A_1 \boxtimes A_2) \\
			(f_1 \times f_2)_* (f_1 \times f_2)^* (f_{1,*} A_1 \boxtimes f_{2,*} A_2) & (f_1 \times f_2)_* (f_1^* f_{1,*} A_1 \boxtimes f_2^* f_{2,*} A_2) \arrow{u}{\epsilon} \arrow{l}{m}
		\end{tikzcd}
	\end{equation}
	is invertible). In this case, we let $\bar{m} = \bar{m}_{f_1,f_2}$ denote the inverse of the natural isomorphism \eqref{iso:m'-left} (resp. the natural isomorphism \eqref{iso:m'-right}).
\end{defn}

One can easily check that an external tensor structure $(\boxtimes,m)$ on $\Hbb$ is left-$\Scal'$-adjointable if and only if, for every choice of morphisms $f_i: T_i \rightarrow S_i$ in $\Scal$, the natural transformation of functors $\Hbb_1(T_1) \times \Hbb_1(S_2) \rightarrow \Hbb_2(S_1 \times S_2)$
\begin{equation*}
	\begin{tikzcd}[font=\small]
		(f_1 \times \id_{S_2})_{\#}(A_1 \boxtimes B_2) \arrow{d}{\eta} & f_{1,\#} A_1 \boxtimes B_2 \\
		(f_1 \times \id_{S_2})_{\#}(f_1^* f_{1,\#} A_1 \boxtimes B_2) \arrow{r}{m} & (f_1 \times \id_{S_2})_{\#} (f_1 \times \id_{S_2})^* (f_{1,\#} A_1 \boxtimes B_2) \arrow{u}{\epsilon}
	\end{tikzcd}
\end{equation*}
and the natural transformation of functors $\Hbb_1(S_1) \times \Hbb_1(T_2) \rightarrow \Hbb(S_1 \times S_2)$
\begin{equation*}
	\begin{tikzcd}[font=\small]
		(\id_{S_1} \times f_2)_{\#}(B_1 \boxtimes A_2) \arrow{d}{\eta} & B_1 \boxtimes f_{2,\#} A_2 \\
		(\id_{S_1} \times f_2)_{\#}(B_1 \boxtimes f_2^* f_{2,\#} A_2) \arrow{r}{m} & (\id_{S_1} \times f_2)_{\#} (\id_{S_1} \times f_2)^* (B_1 \boxtimes f_{2,\#} A_2) \arrow{u}{\epsilon}
	\end{tikzcd}
\end{equation*}
are both invertible. Thus the notion of left-adjointability just introduced recovers the formulation of the \textit{projection formulae} in the language of external tensor structures as stated in \cite[\S~7]{Ter24Fib}. Analogous considerations apply to the notion of right-adjointability.

Given a left-$\Scal'$-adjointable (resp. right-$\Scal'$-adjointable) external tensor structure on a left-adjointable (resp. right adjointable) $\Scal$-fibered category, the following result describes how to convert it into an external tensor structure on the corresponding ${\Scal'}^{op}$-fibered category:

\begin{lem}\label{lem:ETS-adj}
	Suppose that the $\Scal$-fibered category $\Hbb$ is left-$\Scal'$-adjointable (resp. right-$\Scal'$-adjointable), and let $(\boxtimes,m)$ be a left-$\Scal'$-adjointable (resp. right-$\Scal'$-adjointable) external tensor structure on $\Hbb$. Then the natural isomorphisms \eqref{iso:m'-left} (resp. the natural isomorphisms \eqref{iso:m'-right}) define an external tensor structure $(\boxtimes,\bar{m})$ on the ${\Scal'}^{op}$-fibered category $\Hbb$ obtained via \Cref{lem_H-adjoint}.
\end{lem}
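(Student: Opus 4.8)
The plan is to check the single coherence axiom ($m$ETS) for the pair $(\boxtimes,\bar{m})$ over the ${\Scal'}^{op}$-fibered category produced by \Cref{lem_H-adjoint}; the invertibility and naturality of each $\bar{m}_{f_1,f_2}$ are already granted by the left-$\Scal'$-adjointability hypothesis, so this is the only point that requires an argument. Concretely, for composable morphisms $f_i: T_i \rightarrow S_i$ and $g_i: S_i \rightarrow V_i$ in $\Scal'$ (with $i = 1,2$), and using $(g_1 \times g_2)(f_1 \times f_2) = (g_1 f_1) \times (g_2 f_2)$, one must show that $\bar{m}_{g_1 f_1, g_2 f_2}$ factors, through the ${\Scal'}^{op}$-connection isomorphisms $\overline{\conn}$ of \Cref{lem_H-adjoint}, as the composite of $\bar{m}_{f_1,f_2}$ followed by $\bar{m}_{g_1,g_2}$. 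This is exactly the mirror image of the original axiom ($m$ETS), with every inverse image functor replaced by its left adjoint and every $\conn$ replaced by $\overline{\conn}$.

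Conceptually, the clean reason is the calculus of mates. By construction, the natural transformation \eqref{iso:m'-left} is the mate of the external monoidality isomorphism $m_{f_1,f_2}$ with respect to the two adjunctions $(f_{1,\#} \times f_{2,\#}) \dashv (f_1^* \times f_2^*)$ and $(f_1 \times f_2)_{\#} \dashv (f_1 \times f_2)^*$, formed across the external tensor product functors; hence $\bar{m}$ is its inverse mate. In the same way, the isomorphism $\overline{\conn}$ of \Cref{lem_H-adjoint} is the mate of $\conn$. Since the axiom ($m$ETS) is an equality between two pastings of $2$-cells built out of the $m$'s and the $\conn$'s, and since the formation of mates is functorial with respect to pasting of $2$-cells, applying the mate construction termwise to the original ($m$ETS) produces precisely the desired identity for $\bar{m}$ and $\overline{\conn}$. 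This is the structural explanation for why the coherence survives the passage to adjoints, and it is also what guarantees that the right-adjointable case is entirely dual and needs no separate treatment.

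To keep the exposition uniform with the proofs of \Cref{lem_H-adjoint} and \Cref{lem_R-adjoint}, I would nonetheless present the argument as an explicit diagram chase. The plan is to expand $\bar{m}_{g_1 f_1, g_2 f_2}$, the two isomorphisms $\bar{m}_{f_1,f_2}$ and $\bar{m}_{g_1,g_2}$, and the relevant $\overline{\conn}$'s into their defining composites of $m$, $\conn$, units $\eta$ and counits $\epsilon$, and then decompose the resulting large diagram into elementary cells of four kinds: a single central cell that commutes by the original axiom ($m$ETS); cells expressing naturality of $\eta$, $\epsilon$ and $m$; cells that collapse via the triangle identities for the adjunctions $(f_{i,\#},f_i^*)$ and $((f_1 \times f_2)_{\#}, (f_1 \times f_2)^*)$; and cells that commute by axiom ($\Scal$-fib-1) in the form already exploited in \Cref{lem_H-adjoint}.

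The main obstacle is purely organizational. One must juggle two distinct adjunctions on each factor together with the product adjunction on $\Hbb(T_1 \times T_2)$, and these are linked only through $m$; the delicate point is to insert and then cancel the units and counits of $(f_1 \times f_2)_{\#} \dashv (f_1 \times f_2)^*$ in such a way that a single occurrence of the original ($m$ETS) cell is exposed while every remaining unit and counit disappears by a triangle identity. This is the direct analogue of the bookkeeping carried out in the chase for \Cref{lem_R-adjoint}, with the external monoidality isomorphism $m$ now playing the role that the transition isomorphism $\theta$ played there.
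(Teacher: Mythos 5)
Your proposal is correct and follows essentially the same route as the paper: the paper's proof likewise reduces everything to verifying ($m$ETS) for $\bar{m}$ by expanding the defining composites and decomposing the resulting diagram into a central cell commuting by the original ($m$ETS), with the remaining cells handled by naturality, the triangle identities, and the construction of $\overline{\conn}$ in \Cref{lem_H-adjoint}. Your additional observation that $\bar{m}$ and $\overline{\conn}$ are mates of $m$ and $\conn$, so that the coherence is preserved by functoriality of mates under pasting, is a valid and cleaner conceptual justification that the paper does not make explicit, but it does not change the substance of the argument.
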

\begin{proof}
	We only treat the left-adjointable case; the other case is analogous. 
	
	We need to check that condition ($m$ETS) is satisfied: given, for $i = 1,2$, two composable morphisms $f_i: T_i \rightarrow S_i$ and $g_i: S_i \rightarrow V_i$ in $\Scal$, we have to show that the diagram of functors $\Hbb(T_1) \times \Hbb(T_2) \rightarrow \Hbb(S_1 \times S_2)$
	\begin{equation*}
		\begin{tikzcd}
			(g_1 f_1)_{\#} A_1 \boxtimes (g_2 f_2)_{\#} A_2 \arrow{rr}{\bar{m}} \arrow[equal]{d} && (g_1 f_1 \times g_2 f_2)_{\#}(A_1 \boxtimes A_2) \arrow[equal]{d} \\
			g_{1,\#} f_{1,\#} A_1 \boxtimes g_{2,\#} f_{2,\#} A_2 \arrow{r}{\bar{m}} & (g_1 \times g_2)_{\#} (f_{1,\#} A_1 \boxtimes f_{2,\#} A_2) \arrow{r}{\bar{m}} & (g_1 \times g_2)_{\#} (f_1 \times f_2)_{\#} (A_1 \boxtimes A_2)
		\end{tikzcd}
	\end{equation*}
	is commutative. Unwinding the various definitions, we obtain the diagram
	\begin{equation*}
		\begin{tikzcd}[font=\tiny]
			(g_{12} f_{12})_{\#} (g_{12} f_{12})^* ((g_1 f_1)_{\#} A_1 \boxtimes (g_2 f_2)_{\#} A_2) \arrow{d}{\epsilon} && (g_{12} f_{12})_{\#} ((g_1 f_1)^* (g_1 f_1)_{\#} A_1 \boxtimes (g_2 f_2)^* (g_2 f_2)_{\#} A_2) \arrow{ll}{m} \\
			(g_1 f_1)_{\#} A_1 \boxtimes (g_2 f_2)_{\#} A_2 \arrow{d}{\eta} && (g_{12} f_{12})_{\#} (A_1 \boxtimes A_2) \arrow{d}{\eta} \arrow{u}{\eta} \\
			(g_1 f_1)_{\#} f_1^* f_{1,\#} A_1 \boxtimes (g_2 f_2)_{\#} f_2^* f_{2,\#} A_2 \arrow{d}{\eta} && (g_{12} f_{12})_{\#} f_{12}^* f_{12,\#} (A_1 \boxtimes A_2) \arrow{d}{\eta} \\
			(g_1 f_1)_{\#} f_1^* g_1^* g_{1,\#} f_{1,\#} A_1 \boxtimes (g_2 f_2)_{\#} f_2^* g_2^* g_{2,\#} f_{2,\#} A_2 \arrow[equal]{d} && (g_{12} f_{12})_{\#} f_{12}^* g_{12}^* g_{12,\#} f_{12,\#} (A_1 \boxtimes A_2) \arrow[equal]{d} \\
			(g_1 f_1)_{\#} (g_1 f_1)^* g_{1,\#} f_{1,\#} A_1 \boxtimes (g_2 f_2)_{\#} (g_2 f_2)^* g_{2,\#} f_{2,\#} A_2 \arrow{d}{\epsilon} && (g_{12} f_{12})_{\#} (g_{12} f_{12})^* g_{12,\#} f_{12,\#} (A_1 \boxtimes A_2) \arrow{d}{\epsilon} \\
			g_{1,\#} f_{1,\#} A_1 \boxtimes g_{2,\#} f_{2,\#} A_2 && g_{12,\#} f_{12,\#} (A_1 \boxtimes A_2) \arrow{d}{\eta} \\
			g_{12,\#} g_{12}^* (g_{1,\#} f_{1,\#} A_1 \boxtimes g_{2,\#} f_{2,\#} A_2) \arrow{u}{\epsilon} && g_{12,\#} f_{12,\#} (f_1^* f_{1,\#} A_1 \boxtimes f_2^* f_{2,\#} A_2) \arrow{d}{m} \\
			g_{12,\#} (g_1^* g_{1,\#} f_{1,\#} A_1 \boxtimes g_2^* g_{2,\#} f_{2,\#} A_2) \arrow{u}{m} & g_{12,\#} (f_{1,\#} A_1 \boxtimes f_{2,\#} A_2) \arrow{l}{\eta} & g_{12,\#} f_{12,\#} f_{12}^* (f_{1,\#} A_1 \boxtimes f_{2,\#} A_2) \arrow{l}{\epsilon}
		\end{tikzcd}
	\end{equation*}
    that we decompose as
    \begin{equation*}
    	\begin{turn}{90}
    		\begin{tikzcd}[font=\tiny]
    			\bullet \arrow{d}{\epsilon} &&&& \bullet \arrow{llll}{m} \\
    			\bullet \arrow{d}{\eta} &&&& \bullet \arrow{d}{\eta} \arrow{u}{\eta} \\
    			\bullet \arrow{d}{\eta} &&& (g_{12} f_{12})_{\#} f_{12}^* g_{12}^* g_{12,\#} (f_{1,\#} A_1 \boxtimes f_{2,\#} A_2) \arrow[equal]{d} \arrow[equal]{dr} & \bullet \arrow{d}{\eta} \\
    			\bullet \arrow[equal]{d} \arrow{r}{\bar{m}} & (g_{12} f_{12})_{\#} (f_1^* g_1^* g_{1,\#} f_{1,\#} A_1 \boxtimes f_2^* g_2^* g_{2,\#} f_{2,\#} A_2) \arrow{r}{m} \arrow[equal]{d} & (g_{12} f_{12})_{\#} f_{12}^* (g_1^* g_{1,\#} f_{1,\#} A_1 \boxtimes g_2^* g_{2,\#} f_{2,\#} A_2) \arrow{r}{m} & (g_{12} f_{12})_{\#} f_{12}^* g_{12}^* (g_{1,\#} f_{1,\#} A_1 \boxtimes g_{2,\#} f_{2,\#} A_2) \arrow[equal]{d} & \bullet \arrow[equal]{d} \\
    			\bullet \arrow{d}{\epsilon} \arrow{r}{\bar{m}} & (g_{12} f_{12})_{\#} ((g_1 f_1)^* g_{1,\#} f_{1,\#} A_1 \boxtimes (g_2 f_2)^* g_{2,\#} f_{\#} A_2) \arrow{rr}{m} && (g_{12} f_{12})_{\#} (g_{12} f_{12})^* (g_{1,\#} f_{1,\#} A_1 \boxtimes g_{2,\#} f_{2,\#} A_2) & \bullet \arrow{d}{\epsilon} \\
    			\bullet &&& (g_{12} f_{12})_{\#} (g_{12} f_{12})^* g_{12,\#} (f_{1,\#} A_1 \boxtimes f_{2,\#} A_2) \arrow[equal]{ur} \arrow[equal]{u} & \bullet \arrow{d}{\eta} \\
    			\bullet \arrow{u}{\epsilon} &&&& \bullet \arrow{d}{m} \\
    			\bullet \arrow{u}{m} && \bullet \arrow{ll}{\eta} && \bullet \arrow{ll}{\epsilon}
    		\end{tikzcd}
    	\end{turn}
    \end{equation*}
    Here, the central rectangle is commutative by axiom ($m$ETS) for $(\boxtimes,m)$ and the right-most central piece is commutative by naturality; as the reader can easily check, the upper and lower pieces are commutative by construction. We omit the details.
\end{proof}

\begin{rem}\label{rem:whatifITS}
	It does not look particularly interesting to develop the analogous results for internal tensor structures: the main reason is that most internal tensor structures encountered in concrete situations are not left-$\Scal'$-adjointable nor right-$\Scal'$-adjointable even if the corresponding external tensor structures are so. 
	To fix ideas, let $(\boxtimes,m)$ be a left-$\Scal$-adjointable external tensor structure on a left-$\Scal'$-adjointable $\Scal$-fibered category $\Hbb$; let $(\otimes,m)$ denote the internal tensor structure on $\Hbb$ corresponding to $(\boxtimes,m)$ under the bijection of \cite[Thm.~6.1]{Ter24Fib}. Recall from \cite[Lemma~2.11]{Ter24Fib} that the functor
	\begin{equation*}
		- \otimes -: \Hbb(S) \times \Hbb(S) \rightarrow \Hbb(S)
	\end{equation*}
    is defined by the formula
    \begin{equation*}
    	A \otimes B := \Delta_S^* (A \boxtimes B)
    \end{equation*}
    and that, for every arrow $f: T \rightarrow S$ in $\Scal$, the internal monoidality isomorphism $m^i_f$ is defined by the formula
    \begin{equation*}
    	m^i_f: f^* A \otimes f^* B := \Delta_T^* (f^* A \boxtimes f^* B) \xrightarrow{m_f} \Delta_T^* (f \times f)^* (A \boxtimes B) = f^* \Delta_S^* (A \boxtimes B) =: (f \times f)^* (A \otimes B).
    \end{equation*}
    Then one should try to see whether, for an arrow $f: T \rightarrow S$ in $\Scal'$, the natural transformation of functors $\Hbb(T) \times \Hbb(T) \rightarrow \Hbb(S)$
    \begin{equation*}
    	f_{\#} (A \otimes B) \xrightarrow{\eta} f_{\#} (f^* f_{\#} A \otimes f^* f_{\#} B) \xrightarrow{m^i_f} f_{\#} f^* (f_{\#} A \otimes f_{\#} B) \xrightarrow{\epsilon }f_{\#} A \otimes f_{\#} B 
    \end{equation*}
    is invertible. Expanding the various definitions, one finds the composite
    \begin{equation*}
    	\begin{tikzcd}[font=\small]
    		f_{\#} \Delta_T^* (A \boxtimes B) \arrow{d} \\
    		f_{\#} \Delta_T^* (f^* f_{\#} A \boxtimes f^* f_{\#} B) \arrow{r}{m_f} &  f_{\#} \Delta_T^* (f \times f)^* (f_{\#} A \boxtimes f_{\#} B) \arrow[equal]{r} & f_{\#} f^* \Delta_S^* (f_{\#} A \boxtimes f_{\#} B) \arrow{d}{\epsilon} \\ 
    		&& \Delta_S^* (f_{\#} A \boxtimes f_{\#} B).
    	\end{tikzcd}
    \end{equation*}
    As one can easily check, the latter coincides with the composite
    \begin{equation*}
    	\begin{tikzcd}[font=\small]
    		f_{\#} \Delta_T^* (A \boxtimes B) \arrow{d}{\eta} \\ 
    		f_{\#} \Delta_T^* (f \times f)^* (f \times f)_{\#} (A \boxtimes B) \arrow[equal]{r} & f_{\#} f^* \Delta_S^* (f \times f)_{\#} (A \boxtimes B) \arrow{r}{\epsilon} & \Delta_S^* (f \times f)_{\#} (A \boxtimes B) \arrow{d}{\overline{m}_f} \\ 
    		&& \Delta_S^* (f_{\#} A \boxtimes f_{\#} B),
    	\end{tikzcd}
    \end{equation*}
    which is invertible if and only of so is the Beck--Chevalley transformation
    \begin{equation*}
    	f_{\#} \Delta_T^* (A \boxtimes B) \xrightarrow{\eta} f_{\#} \Delta_T^* (f \times f)^* (f \times f)_{\#} (A \boxtimes B) = f_{\#} f^* \Delta_S^* (f \times f)_{\#} (A \boxtimes B) \xrightarrow{\epsilon} \Delta_S^* (f \times f)_{\#} (A \boxtimes B)
    \end{equation*}
    associated to the commutative diagram in $\Scal$
    \begin{equation}\label{dia:TSdelta}
    	\begin{tikzcd}
    		T \arrow{r}{\Delta_T} \arrow{d}{f} & T \times T \arrow{d}{f \times f} \\
    		S \arrow{r}{\Delta_S} & S \times S.
    	\end{tikzcd}
    \end{equation}
    In most concrete geometric settings, one would try to conclude by applying some base-change result; but the diagram \eqref{dia:TSdelta} is not Cartesian unless $f$ is a monomorphism! In the case where $\Scal = \Var_k$ is the category of algebraic varieties over a field $k$ and $\Hbb$ underlies a monoidal stable homotopy $2$-functor in the sense of \cite{Ayo07a,Ayo07b}, the internal tensor structure on $\Hbb$ is $\Scal'$-left-adjointable for $\Scal'$ the class of open immersions and $\Scal'$-right-adjointable for $\Scal'$ the class of closed immersions; there does not seem to be much more to say outside these two cases.
\end{rem}

Let us also check that adjointability of external tensor structure is compatible with external associativity and commutativity constraints as introduced in \cite[\S\S~3, 4]{Ter24Fib}. Let $(\boxtimes,m)$ be an external tensor structure on $\Hbb$. 
Recall that an \textit{external associativity constraint} $a$ on $(\boxtimes,m)$ is the datum of
\begin{itemize}
	\item for every $S_1, S_2, S_3 \in \Scal$, a natural isomorphism between functors $\Hbb(S_1) \times \Hbb(S_2) \times \Hbb(S_3) \rightarrow \Hbb(S_1 \times S_2 \times S_3)$
	\begin{equation*}
		a = a_{S_1,S_2,S_3}: (A_1 \boxtimes A_2) \boxtimes A_3 \xrightarrow{\sim} A_1 \boxtimes (A_2 \boxtimes A_3)
	\end{equation*}
\end{itemize}
satisfying the following conditions:
\begin{enumerate}
	\item[($a$ETS-1)] For every $S_1, S_2, S_3, S_4 \in \Scal$, the diagram of functors $\Hbb(S_1) \times \Hbb(S_2) \times \Hbb(S_3) \times \Hbb(S_4) \rightarrow \Hbb(S_1 \times S_2 \times S_3 \times S_4)$
	\begin{equation*}
		\begin{tikzcd}
			((A_1 \boxtimes A_2) \boxtimes A_3) \boxtimes A_4 \arrow{d}{a} \arrow{rr}{a} && (A_1 \boxtimes A_2) \boxtimes (A_3 \boxtimes A_4) \arrow{d}{a} \\
			(A_1 \boxtimes (A_2 \boxtimes A_3)) \boxtimes A_4 \arrow{r}{a} & A_1 \boxtimes ((A_2 \boxtimes A_3) \boxtimes A_4) \arrow{r}{a} & A_1 \boxtimes (A_2 \boxtimes (A_3 \boxtimes A_4))
		\end{tikzcd}
	\end{equation*}
	is commutative.
	\item[($a$ETS-2)] For every three morphisms $f_i: T_i \rightarrow S_i$ in $\Scal$, $i = 1,2,3$, the diagram of functors $\Hbb(S_1) \times \Hbb(S_2) \times \Hbb(S_3) \rightarrow \Hbb(T_1 \times T_2 \times T_3)$
	\begin{equation*}
		\begin{tikzcd}
			(f_1^* A_1 \boxtimes f_2^* A_2) \boxtimes f_3^* A_3 \arrow{r}{m} \arrow{d}{a} & (f_1 \times f_2)^* (A_1 \boxtimes A_2) \boxtimes f_3^* A_3 \arrow{r}{m} & (f_1 \times f_2 \times f_3)^* ((A_1 \boxtimes A_2) \boxtimes A_3) \arrow{d}{a} \\
			f_1^* A_1 \boxtimes (f_2^* A_2 \boxtimes f_3^* A_3) \arrow{r}{m} & f_1^* A_1 \boxtimes (f_2 \times f_3)^* (A_2 \boxtimes A_3) \arrow{r}{m} & (f_1 \times f_2 \times f_3)^* (A_1 \boxtimes (A_2 \boxtimes A_3))  
		\end{tikzcd}
	\end{equation*}
	is commutative.
\end{enumerate}
In the same spirit, an \textit{external commutativity constraint} $c$ on $(\boxtimes,m)$ is the datum of
\begin{itemize}
	\item for every $S_1, S_2 \in \Scal$, a natural isomorphism between functors $\Hbb(S_1) \times \Hbb(S_2) \rightarrow \Hbb(S_1 \times S_2)$
	\begin{equation*}
		c = c_{S_1,S_2}: A_1 \boxtimes A_2 \xrightarrow{\sim} \tau^*(A_2 \boxtimes A_1),
	\end{equation*}
	where $\tau = \tau_{(12)}: S_1 \times S_2 \rightarrow S_2 \times S_1$ denotes the permutation isomorphism
\end{itemize}
satisfying the following conditions:
\begin{enumerate}
	\item[($c$ETS-1)]  For every $S_1, S_2 \in \Scal$, the diagram of functors $\Hbb(S_1) \times \Hbb(S_2) \rightarrow \Hbb(S_1 \times S_2)$
	\begin{equation*}
		\begin{tikzcd}
			A_1 \boxtimes A_2 \arrow{r}{c} \arrow[equal]{dr} & \tau^*(A_2 \boxtimes A_1) \arrow{d}{c} \\
			& \tau^* \tau^*(A_1 \boxtimes A_2)
		\end{tikzcd}
	\end{equation*}
	is commutative.
	\item[($c$ETS-2)] For every choice of of morphisms $f_i: T_i \rightarrow S_i$ in $\Scal$, $i = 1,2$, the diagram of functors $\Hbb(S_1) \times \Hbb(S_2) \rightarrow \Hbb(T_2 \times T_1)$
	\begin{equation*}
		\begin{tikzcd}
			f_1^* A_1 \boxtimes f_2^* A_2 \arrow{rr}{c} \arrow{d}{m} && \tau^* (f_2^* A_2 \boxtimes f_1^* A_1) \arrow{d}{m} \\
			(f_1 \times f_2)^* (A_1 \boxtimes A_2)\arrow{r}{c} & (f_1 \times f_2)^* \tau^* (A_2 \boxtimes A_1) \arrow[equal]{r} & \tau^* (f_2 \times f_1)^* (A_2 \boxtimes A_1)
		\end{tikzcd}
	\end{equation*}
	is commutative.
\end{enumerate}

The following result shows that external associativity and commutativity constraints behave well under the passage just described:

\begin{lem}\label{lem:adj-asso-comm}
	Let $\Hbb$ be a left-$\Scal'$-adjointable (resp. right-$\Scal'$-adjointable) $\Scal$-fibered category. Let $(\boxtimes,m)$ be a left-$\Scal'$-adjointable (resp. right-$\Scal'$-adjointable) external tensor structure on $\Hbb$, and let $(\boxtimes,\bar{m})$ denote the external tensor structure over ${\Scal'}^{op}$ obtained from it via \Cref{lem:ETS-adj}. Then:
	\begin{enumerate}
		\item Every external associativity constraint $a$ on $(\boxtimes,m)$ also defines an associativity constraint on $(\boxtimes,\bar{m})$.
		\item Every external commutativity constraint $c$ on $(\boxtimes,m)$ also defines a commutativity constraint on $(\boxtimes,\bar{m})$.
	\end{enumerate}
\end{lem}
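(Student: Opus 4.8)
As in the preceding lemmas, I would only treat the left-$\Scal'$-adjointable case, the right-$\Scal'$-adjointable case being entirely analogous. The plan is to handle the two constraints in parallel and to exploit the fact that only one of each pair of axioms actually involves the external monoidality isomorphism. First I would note that conditions ($a$ETS-1) and ($c$ETS-1) are statements purely about the associativity (resp. commutativity) constraint and the external tensor product functors $\boxtimes$; they do not mention the monoidality isomorphism at all. Since the functors $-\boxtimes_{S_1,S_2}-$ underlying $(\boxtimes,m)$ and $(\boxtimes,\bar m)$ literally coincide, these two axioms hold for $(\boxtimes,\bar m)$ the moment they hold for $(\boxtimes,m)$. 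Thus the entire content of the lemma is to verify conditions ($a$ETS-2) and ($c$ETS-2) with $\bar m$ in place of $m$.

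The key observation is that the $\bar m$-version of each of these axioms is the \emph{mate}, under the adjunctions $f_{i,\#} \dashv f_i^{*}$ and those attached to the various products of the $f_i$, of the corresponding $m$-version. Concretely, for ($a$ETS-2) I would expand each of the four occurrences of $\bar m$ (namely $\bar m_{f_1,f_2}$, $\bar m_{f_1\times f_2,f_3}$, $\bar m_{f_2,f_3}$ and $\bar m_{f_1,f_2\times f_3}$) into its defining composite of a unit $\eta$, the monoidality isomorphism $m$, and a counit $\epsilon$, following \Cref{defn:ETS-adj}. The resulting large diagram then decomposes into three kinds of faces: a single central copy of ($a$ETS-2) for $m$; naturality squares for $a$ against the various unit and counit cells, using crucially that $a$ — being a natural transformation between iterated $\boxtimes$-functors — is natural with respect to \emph{all} morphisms in the fibres, in particular with respect to the components of $\eta$ and $\epsilon$; and triangle identities for the adjunctions involved. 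Arranging the chase so that every region is visibly of one of these three types gives the commutativity of the $\bar m$-hexagon.

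For ($c$ETS-2) the same mate argument applies, with the extra bookkeeping caused by the permutation $\tau$. Here I would first record that, since $\tau$ is an isomorphism in $\Scal$, the inverse image $\tau^{*}$ is an equivalence with quasi-inverse $(\tau^{-1})^{*}$; equivalently, the naturality square of $\tau$ against $f_1 \times f_2$ is (trivially) Cartesian, so its Beck--Chevalley transformation is invertible and $\tau^{*}$ commutes with the passage to left adjoints. This lets me transport, from $f^{*}$ to $f_{\#}$, the connection isomorphism $(f_1\times f_2)^{*}\tau^{*} = \tau^{*}(f_2\times f_1)^{*}$ appearing in the bottom row of ($c$ETS-2). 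Once this identification is in place, the diagram chase proceeds exactly as in the associativity case, with a central copy of ($c$ETS-2) for $m$ surrounded by naturality squares for $c$ and triangle-identity faces.

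The main obstacle is purely organizational: the diagrams are large, because the adjunctions attached to $f_1,f_2,f_3$ and to their products interact with two nested levels of monoidality isomorphism, and one must lay out the faces carefully so that each is manifestly either an instance of the $m$-axiom, a naturality square for $a$ (resp. $c$), or a triangle identity. Conceptually there is no difficulty: the constraints $a$ and $c$ involve only the tensor structure and not the fibration, so they are insensitive to the replacement of $f^{*}$ by $f_{\#}$, and the sole genuinely non-formal point — the interaction of $\tau^{*}$ with the mate construction in the commutativity case — is dispatched by the Cartesianness of the permutation square noted above.
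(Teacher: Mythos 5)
Your proposal is correct and follows essentially the same route as the paper: both reduce the problem to the axioms ($a$ETS-2) and ($c$ETS-2) (since the first axiom of each constraint does not involve the monoidality isomorphisms), expand each occurrence of $\bar{m}$ into its unit--counit definition, and decompose the resulting diagram into a central copy of the corresponding axiom for $m$ surrounded by naturality squares and triangle-identity faces. Your explicit remark on how $\tau^{*}$ interacts with the passage to adjoints is a harmless elaboration of bookkeeping the paper handles implicitly via coherence of connection isomorphisms.
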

\begin{proof}
	We only treat the right-adjointable case; the other case is analogous.
	\begin{enumerate}
		\item Let $a$ be an external associativity constraint on $(\boxtimes,m)$. We need to check that it satisfies conditions ($a$ETS-1) and ($a$ETS-2) with respect to $(\boxtimes,\bar{m})$. Since condition ($a$ETS-1) is clearly independent on whether we regard $\Hbb$ as a fibered category over $\Scal'$ or over ${\Scal'}^{op}$, we only need to check condition ($a$ETS-2): given morphisms $f_i: T_i \rightarrow S_i$ in $\Scal'$, $i = 1,2,3$, we have to show that the diagram of functors $\Hbb(T_1) \times \Hbb(T_2) \times \Hbb(T_3) \rightarrow \Hbb(S_1 \times S_2 \times S_3)$
		\begin{equation*}
			\begin{tikzcd}
				(f_{1,*} A_1 \boxtimes f_{2,*} A_2) \boxtimes f_{3,*} A_3 \arrow{r}{\bar{m}} \arrow{d}{a} & (f_1 \times f_2)_*(A_1 \boxtimes A_2) \boxtimes f_{3,*} A_3 \arrow{r}{\bar{m}} & (f_1 \times f_2 \times f_3)_* ((A_1 \boxtimes A_2) \boxtimes A_3)  \arrow{d}{a} \\					
				f_{1,*} A_1 \boxtimes (f_{2,*} A_2 \boxtimes f_{3,*} A_3) \arrow{r}{\bar{m}} & f_{1,*} A_1 \boxtimes (f_2 \times f_3)_*(A_2 \boxtimes A_3) \arrow{r}{\bar{m}} & (f_1 \times f_2 \times f_3)_*(A_1 \boxtimes (A_2 \boxtimes A_3))  
			\end{tikzcd}
		\end{equation*}
	    is commutative. To this end, we decompose it as
		\begin{equation*}
			\begin{tikzcd}[font=\tiny]
				\bullet \arrow{rr}{\bar{m}} \arrow{ddd}{a} \arrow{dr}{\eta} && \bullet \arrow{rr}{\bar{m}} && \bullet \arrow{ddd}{a} \\
				& f_{123,*} f_{123}^* ((B_1 \boxtimes B_2) \boxtimes B_3) \arrow{d}{a} & f_{123,*} (f_{12}^* (B_1 \boxtimes B_2) \boxtimes f_3^* B_3) \arrow{l}{m} & f_{123,*} ((f_1^* B_1 \boxtimes f_2^* B_2) \boxtimes f_3^* B_3) \arrow{d}{a} \arrow{l}{m} \arrow{ur}{\epsilon} \\
				& f_{123,*} f_{123}^* (B_1 \boxtimes (B_2 \boxtimes B_3)) & f_{123,*} (f_1^* B_1 \boxtimes f_{23}^* (B_2 \boxtimes B_3)) \arrow{l}{m} & f_{123,*} (f_1^* B_1 \boxtimes (f_2^* B_2 \boxtimes f_2^* B_3)) \arrow{l}{m} \arrow{dr}{\epsilon} \\
				\bullet \arrow{rr}{\bar{m}} \arrow{ur}{\eta} && \bullet \arrow{rr}{\bar{m}} && \bullet
			\end{tikzcd}
		\end{equation*}
		where, for notational convenience, we have set $B_i := f_i^* A_i$, $i = 1,2,3$. Since the left-most and rightmost pieces are commutative by naturality, while the central piece is commutative by axiom ($a$ETS-2) for $(\boxtimes,m)$, it now suffices to show that the upper and lower pieces are commutative as well. This can be checked directly by expanding the definition of the external monoidality isomorphisms $\bar{m}$ and filling the resulting diagram appropriately; we leave the details to the interested reader.
		\item Let $c$ be an external commutativity constraint on $(\boxtimes,m)$. We need to check that it satisfies conditions ($c$ETS-1) and ($c$ETS-2) with respect to $(\boxtimes,\bar{m})$. Since condition ($c$ETS-1) is clearly independent on whether we regard $\Hbb$ as a fibered category over $\Scal'$ or over ${\Scal'}^{op}$, we only need to check condition ($c$ETS-2): given morphisms $f_i: T_i \rightarrow S_i$ in $\Scal'$, $i = 1,2$, we have to show that the diagram of functors $\Hbb(T_1) \times \Hbb(T_2) \rightarrow \Hbb(S_1 \times S_2)$
		\begin{equation*}
			\begin{tikzcd}
				f_{1,*} A_1 \boxtimes f_{2,*} A_2 \arrow{rr}{c} \arrow{d}{\bar{m}} && \tau_*(f_{2,*} A_2 \boxtimes f_{1,*} A_1) \arrow{d}{\bar{m}} \\
				(f_1 \times f_2)_*(A_1 \boxtimes A_2)\arrow{r}{c} & (f_1 \times f_2)_* \tau_* (A_2 \boxtimes A_1) \arrow[equal]{r} & \tau_* (f_2 \times f_1)_*(A_2 \boxtimes A_1)
			\end{tikzcd}
		\end{equation*}
		(where $\tau = \tau_{(12)}: S_1 \times S_2 \xrightarrow{\sim} S_2 \times S_1$ and $\tau = \tau_{(12)}: T_1 \times T_2 \xrightarrow{\sim} T_2 \times T_1$ denote the permutation isomorphisms) is commutative. Expanding the definition of the horizontal arrows (and writing $f_{12} := f_1 \times f_2$ and $f_{21} := f_2 \times f_1$ for convenience) we obtain the outer part of the diagram
		\begin{equation*}
			\begin{tikzcd}[font=\small]
				f_{1,*} A_1 \boxtimes f_{2,*} A_2 \arrow{rr}{\eta} \arrow{d}{\eta} && \tau_* \tau^* (f_{1,*} A_1 \boxtimes f_{2,*} A_2) & \tau_* (f_{2,*} A_2 \boxtimes f_{1,*} A_1) \arrow{l}{c} \arrow{d}{\eta} \\
				f_{12,*} f_{12}^* (f_{1,*} A_1 \boxtimes f_{2,*} A_2) &&& \tau_* f_{21,*} f_{21}^* (f_2^* A_2 \boxtimes f_1^* A_1) \\
				f_{12,*} (f_1^* f_{1,*} A_1 \boxtimes f_2^* f_{2,*} A_2) \arrow{u}{m} \arrow{d}{\epsilon} &&& \tau_* f_{21,*} (f_2^* f_{2,*} A_2 \boxtimes f_1^* f_{1,*} A_1) \arrow{u}{m} \arrow{d}{\epsilon} \\
				f_{12,*} (A_1 \boxtimes A_2) \arrow{r}{\eta} & f_{12,*} \tau_* \tau^* (A_1 \boxtimes A_2) & f_{12,*} \tau_* (A_2 \boxtimes A_1) \arrow{l}{c} \arrow[equal]{r} & \tau_* f_{21,*} (A_2 \boxtimes A_1)
			\end{tikzcd}
		\end{equation*}
	    that we decompose as
	    \begin{equation*}
	    	\begin{tikzcd}[font=\tiny]
	    		\bullet \arrow{rr}{\eta} \arrow{d}{\eta} && \bullet \arrow{dr}{\eta} && \bullet \arrow{ll}{c} \arrow{d}{\eta} \\
	    		\bullet \arrow{r}{\eta} & f_{12,*} \tau_* \tau^* f_{12}^* (f_{1,*} A_1 \boxtimes f_{2,*} A_2) \arrow[equal]{r} & f_{12,*} \tau_* f_{21}^* \tau^* (f_{1,*} A_1 \boxtimes f_{2,*} A_2) \arrow[equal]{r} & \tau_* f_{21,*} f_{21}^* \tau^* (f_{1,*} A_1 \boxtimes f_{2,*} A_2) & \bullet \arrow{l}{c} \\
	    		&&& f_{12,*} \tau_* f_{21}^* (f_{2,*} A_2 \boxtimes f_{1,*} A_1) \arrow{ul}{c} \arrow[equal]{ur} \\
	    		\bullet \arrow{uu}{m} \arrow{d}{\epsilon} \arrow{r}{\eta} & f_{12,*} \tau_* \tau^* (f_1^* f_{1,*} A_1 \boxtimes f_2^* f_{2,*} A_2) \arrow{uu}{m} \arrow{dr}{\epsilon} && f_{12,*} \tau_* (f_2^* f_{2,*} A_2 \boxtimes f_1^* f_{1,*} A_1) \arrow{u}{m} \arrow{ll}{c} \arrow[equal]{r} \arrow{d}{\epsilon} & \bullet \arrow{uu}{m} \arrow{d}{\epsilon} \\
	    		\bullet \arrow{rr}{\eta} && \bullet & \bullet \arrow{l}{c} \arrow[equal]{r} & \bullet
	    	\end{tikzcd}
	    \end{equation*}
        Here, the central piece is commutative by axiom ($c$ETS-2) for $(\boxtimes,m)$ while the other pieces are commutative by naturality and by construction. This proves the claim.
	\end{enumerate}
\end{proof}

\begin{rem}
	In \cite[\S~6]{Ter24Fib} we introduced the natural compatibility condition ($ac$ETS) between external associativity and commutativity constraints on a given external tensor structure: this condition does not involve inverse image functors except those along permutation isomorphisms. As long as $\Scal'$ contains all (permutation) isomorphisms in $\Scal$, the validity of ($ac$ETS) does not depend on whether one regards $\Hbb$ as an $\Scal'$-fibered or as an ${\Scal'}^{op}$-fibered category.
\end{rem}

Finally, we combine the results collected so far for morphisms of $\Scal$-fibered categories and for external tensor structures in order to obtain a similar result for external tensor structures on morphisms.

For $j = 1,2$ let $\Hbb_j$ be an $\Scal$-fibered category endowed with an external tensor structure $(\boxtimes_j,m_j)$; moreover, let $R = (\Rcal,\theta): \Hbb_1 \rightarrow \Hbb_2$ be a morphism of $\Scal$-fibered categories. Recall from \cite[\S~8]{Ter24Fib} that an \textit{external tensor structure} $\rho$ on $R$ (with respect to $(\boxtimes_1,m_1)$ and $(\boxtimes_2,m_2)$) is the datum of
\begin{itemize}
	\item for every $S_1, S_2 \in \Scal$, a natural isomorphism of functors $\Hbb_1(S_1) \times \Hbb_1(S_2) \rightarrow \Hbb_2(S_1 \times S_2)$
	\begin{equation*}
		\rho = \rho_{S_1,S_2}: R_{S_1}(A_1) \boxtimes_2 R_{S_2}(A_2) \xrightarrow{\sim} R_{S_1 \times S_2}(A_1 \boxtimes_1 A_2),
	\end{equation*}
	called the \textit{external $R$-monoidality isomorphism} at $(S_1,S_2)$,
\end{itemize}
satisfying the following condition:
\begin{enumerate}
	\item[(mor-ETS)] For every two morphisms $f_i: T_i \rightarrow S_i$ in $\Scal$, $i = 1,2$, the natural diagram of functors $\Hbb_1(S_1) \times \Hbb_1(S_2) \rightarrow \Hbb_2(T_1 \times T_2)$
	\begin{equation*}
		\begin{tikzcd}
			f_1^* R_{S_1}(A_1) \boxtimes_2 f_2^* R_{S_2}(A_2) \arrow{r}{m_2} \arrow{d}{\theta} & (f_1 \times f_2)^* (R_{S_1}(A_1) \boxtimes_" R_{S_2}(A_2)) \arrow{r}{\rho} & (f_1 \times f_2)^* R_{S_1 \times S_2}(A_1 \boxtimes_1 A_2) \arrow{d}{\theta} \\
			R_{T_1}(f_1^* A_1) \boxtimes_2 R_{T_2}(f_2^* A_2) \arrow{r}{\rho} & R_{T_1 \times T_2}(f_1^* A_1 \boxtimes_1 f_2^* A_2) \arrow{r}{m_1} & R_{T_1 \times T_2}((f_1 \times f_2)^*(A_1 \boxtimes_1 A_2))
		\end{tikzcd}
	\end{equation*}
	is commutative.
\end{enumerate}

As usual, note that any external tensor structure on $R$ defines by restriction an external tensor structure on the underlying morphism of $\Scal'$-fibered categories. 

We do not need to introduce any notion of adjointability for external tensor structures on morphisms, as the following result shows:

\begin{lem}\label{lem:rho-adj}
	For $j = 1,2$ let $\Hbb_j$ be a left-$\Scal'$-adjointable (resp. right-$\Scal'$-adjointable) $\Scal$-fibered category endowed with a left-$\Scal'$-adjointable (resp. right-$\Scal'$-adjointable) external tensor structure $(\boxtimes_j,m_j)$, and let $(\boxtimes_j,\bar{m}_j)$ denote the external tensor structure on the ${\Scal'}^{op}$-fibered category $\Hbb$ obtained from it via \Cref{lem:ETS-adj}. 
	Moreover, let $R = (\Rcal,\theta): \Hbb_1 \rightarrow \Hbb_2$ be a left-$\Scal'$-adjointable (resp. right-$\Scal'$-adjointable) morphism of $\Scal$-fibered categories, and let $(\Rcal,\bar{\theta}): \Hbb_1 \rightarrow \Hbb_2$ denote the morphism of ${\Scal'}^{op}$-fibered categories obtained from it via \Cref{lem_R-adjoint}. 
	
	Suppose that we are given an external tensor structure $\rho$ on $(\Rcal,\theta)$ (with respect to $(\boxtimes_1,m_1)$ and $(\boxtimes_2,m_2)$). 
	Then $\rho$ also defines an external tensor structure on $(\Rcal,\bar{\theta})$ (with respect to the external tensor structures $(\boxtimes_1,\bar{m}_1)$ and $(\boxtimes_2,\bar{m}_2)$).
\end{lem}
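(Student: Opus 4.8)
The plan is to follow the same template as in the proofs of \Cref{lem_R-adjoint}, \Cref{lem:ETS-adj} and \Cref{lem:adj-asso-comm}. The first observation is that the external $R$-monoidality isomorphisms $\rho_{S_1,S_2}$ make no reference whatsoever to inverse image functors: they are simply natural isomorphisms between the functors $R_{-}(-) \boxtimes_2 R_{-}(-)$ and $R_{- \times -}(- \boxtimes_1 -)$, depending only on the families $\Rcal$ and $\boxtimes_j$ and not on the fibration structure. Hence the very same family $\rho$ is a legitimate candidate for an external $R$-monoidality isomorphism with respect to $(\boxtimes_1,\bar{m}_1)$ and $(\boxtimes_2,\bar{m}_2)$, and its components are still natural isomorphisms for free. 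The only thing left to verify is therefore that $\rho$ satisfies condition (mor-ETS) in the ${\Scal'}^{op}$-setting. As usual, I would only treat the left-$\Scal'$-adjointable case, the right-adjointable one being entirely analogous.

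So fix morphisms $f_i: T_i \to S_i$ in $\Scal'$ for $i = 1,2$. I would write down the square expressing (mor-ETS) for the new structure -- the one built from $\bar{m}_2$, $\rho$, $\bar{\theta}$ and $\bar{m}_1$ -- and then expand every decorated arrow by its definition: $\bar{m}_1$ and $\bar{m}_2$ via \eqref{iso:m'-left} (that is, as the appropriate composites of the unit $\eta$, the counit $\epsilon$ and $m_1$, $m_2$), and $\bar{\theta}$ via \eqref{theta'-l} (as a composite of $\eta$, $\epsilon$ and $\theta$). This produces a large diagram involving the three adjunction pairs $(f_{1,\#},f_1^*)$, $(f_{2,\#},f_2^*)$ and $((f_1 \times f_2)_{\#},(f_1 \times f_2)^*)$, entangled with the $R$-transition isomorphism $\theta$.

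The heart of the argument is the decomposition of this large diagram. I would isolate a central region which, after cancelling the outer units and counits through the triangle identities, is exactly an instance of condition (mor-ETS) for the original data $\rho$ with respect to $(\boxtimes_1,m_1)$ and $(\boxtimes_2,m_2)$, evaluated at $f_1$ and $f_2$ (applied to the objects $f_{1,\#} A_1$ and $f_{2,\#} A_2$, up to the relevant transition and connection isomorphisms). All the remaining regions then commute either by naturality of $\rho$, $m_1$, $m_2$ and $\theta$, or by construction -- i.e. by the zig-zag identities for the three adjunctions together with the coherence of connection and transition isomorphisms recorded in \Cref{rem:theta}. This is the same kind of bookkeeping carried out explicitly in \Cref{lem:ETS-adj} and \Cref{lem:adj-asso-comm}, so I would present the decomposition, point to the central copy of (mor-ETS), and dispatch the peripheral pieces "by naturality and by construction".

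The main obstacle is precisely this combinatorial bookkeeping: because two external-product adjunctions and the $R$-adjointability interact simultaneously, one has to route the units and counits carefully so that the central copy of (mor-ETS) appears cleanly and no region is left over. Conceptually, however, nothing deep is happening -- the structure maps $\bar{m}_j$ and $\bar{\theta}$ are the mates of $m_j$ and $\theta$ under the respective adjunctions, and (mor-ETS) for $\rho$ is an equality of pasting diagrams whose mate is exactly (mor-ETS) for $\rho$ in the adjoint setting. The explicit chase is merely an unwinding of this mate-calculus principle within the conventions of the paper, and the routine verification of the outer pieces can be safely left to the interested reader.
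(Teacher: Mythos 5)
Your proposal matches the paper's proof: both treat only the left-adjointable case, expand $\bar{m}_1$, $\bar{m}_2$ and $\bar{\theta}$ by their definitions as mates, and decompose the resulting diagram so that the central piece is an instance of (mor-ETS) for $(\Rcal,\theta)$ while the peripheral pieces commute by naturality and by construction. This is essentially the same argument, so nothing further is needed.
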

\begin{proof}
	We only prove the thesis in the left-adjointable case; the other case is analogous. We need to check that condition (mor-ETS) is satisfied with respect to the morphism $(\Rcal,\bar{\theta})$: for every choice of morphisms $f_i: T_i \rightarrow S_i$ in $\Scal'$, $i = 1,2$, we have to show that the diagram of functors $\Hbb_1(T_1) \times \Hbb_1(T_2) \rightarrow \Hbb_2(S_1 \times S_2)$
	\begin{equation*}
		\begin{tikzcd}
			f_{1,\#} R_{T_1}(A_1) \boxtimes_2 f_{2,\#} R_{T_2}(A_2) \arrow{r}{\bar{m}_2} \arrow{d}{\bar{\theta}} & (f_1 \times f_2)_{\#} (R_{T_1}(A_1) \boxtimes_2 R_{T_2}(A_2)) \arrow{r}{\bar{\rho}} & (f_1 \times f_2)_{\#} R_{T_1 \times T_2}(A_1 \boxtimes_1 A_2) \arrow{d}{\bar{\theta}} \\
			R_{S_1}(f_{1,\#} A_1) \boxtimes_2 R_{S_2}(f_{2,\#} A_2) \arrow{r}{\bar{\rho}} & R_{S_1 \times S_2}(f_{1,\#} A_1 \boxtimes_1 f_{2,\#} A_2) \arrow{r}{\bar{m}_1} & R_{S_1 \times S_2}((f_1 \times f_2)_{\#} (A_1 \boxtimes_1 A_2))
		\end{tikzcd}
	\end{equation*}
    is commutative. Unwinding the various definitions, we obtain the more explicit diagram
    \begin{equation*}
    	\begin{tikzcd}[font=\tiny]
    		f_{12,\#} f_{12}^* (f_{1,\#} R_{T_1}(A_1) \boxtimes_2 f_{2,\#} R_{T_2}(A_2)) \arrow{d}{\epsilon} & f_{12,\#} (f_1^* f_{1,\#} R_{T_1}(A_1) \boxtimes_2 f_2^* f_{2,\#} R_{T_2}(A_2)) \arrow{l}{m_2} \arrow{r}{\epsilon} & f_{12,\#} (R_{T_1}(A_1) \boxtimes_2 R_{T_2}(A_2)) \arrow{d}{\rho} \\
    		f_{1,\#} R_{T_1}(A_1) \boxtimes_2 f_{2,\#} R_{T_2}(A_2) \arrow{d}{\eta} && f_{12,\#} R_{T_{12}}(A_1 \boxtimes_1 A_2) \arrow{d}{\eta} \\
    		f_{1,\#} R_{T_1}(f_1^* f_{1,\#} A_1) \boxtimes_2 f_{2,\#} R_{T_2}(f_2^* f_{2,\#} A_2) && f_{12,\#} R_{T_{12}}(f_{12}^* f_{12,\#} (A_1 \boxtimes_1 A_2)) \\
    		f_{1,\#} f_1^* R_{S_1}(f_{1,\#} A_1) \boxtimes_2 f_{2,\#} f_2^* R_{S_2}(f_{2,\#} A_2) \arrow{u}{\theta} \arrow{d}{\epsilon} && f_{12,\#} f_{12}^* R_{S_{12}}(f_{12,\#} (A_1 \boxtimes_1 A_2)) \arrow{u}{\theta} \arrow{d}{\epsilon} \\
    		R_{S_1}(f_{1,\#} A_1) \boxtimes_2 R_{S_2}(f_{2,\#} A_2) \arrow{d}{\rho} && R_{S_{12}}(f_{12,\#} (A_1 \boxtimes_1 A_2)) \\
    		R_{S_{12}}(f_{1,\#} A_1 \boxtimes_1 f_{2,\#} A_2) & R_{S_{12}}(f_{12,\#} f_{12}^* (f_{1,\#} A_1 \boxtimes_1 f_{2,\#} A_2)) \arrow{l}{\epsilon} & R_{S_{12}}(f_{12,\#} (f_1^* f_{1,\#} A_1 \boxtimes_1 f_2^* f_{2,\#} A_2)) \arrow{l}{m_1} \arrow{u}{\epsilon}
    	\end{tikzcd}
    \end{equation*}
    that we decompose as
    \begin{equation*}
    	\begin{tikzcd}[font=\small]
    		\bullet \arrow{d}{\epsilon} &&& \bullet \arrow{lll}{m_2} \arrow{r}{\epsilon} \arrow{ddll}{\bar{\theta}} & \bullet \arrow{d}{\rho} \arrow{dl}{\eta} \\
    		\bullet \arrow{d}{\eta} \arrow{r}{\eta} & f_{1,\#} f_1^* f_{1,\#} R_{T_1}(A_1) \boxtimes_2 f_{2,\#} f_2^* f_{2,\#} R_{T_2}(A_2) \arrow{urr}{\bar{m}_2} \arrow[bend right]{ddl}{\bar{\theta}} && f_{12,\#} (R_{T_1}(f_1^* f_{1,\#} A_1) \boxtimes_2 R_{T_2}(f_2^* f_{2,\#} A_2)) \arrow{d}{\rho} & \bullet \arrow{d}{\eta} \arrow{dl}{\eta} \\
    		\bullet & f_{12,\#} (f_1^* R_{S_1}(f_{1,\#} A_1) \boxtimes_2 f_2^* R_{S_2}(f_{2,\#} A_2)) \arrow{urr}{\theta} \arrow{d}{m_2} && f_{12,\#} R_{T_{12}}(f_1^* f_{1,\#} A_1 \boxtimes_1 f_2^* f_{2,\#} A_2) \arrow{d}{m_1} & \bullet \arrow[bend left]{ddl}{\bar{\theta}} \\
    		\bullet \arrow{u}{\theta} \arrow{d}{\epsilon} \arrow{ur}{\bar{m}_2} & f_{12,\#} f_{12}^* (R_{S_1}(f_{1,\#} A_1) \boxtimes_2 R_{S_2}(f_{2,\#} A_2)) \arrow{d}{\rho} \arrow{dl}{\epsilon} && f_{12,\#} R_{T_{12}}(f_{12}^* (f_{1,\#} A_1 \boxtimes_1 f_{2,\#} A_2)) \arrow{ddll}{\bar{\theta}} \arrow{ur}{\bar{m}_1} & \bullet \arrow{u}{\theta} \arrow{d}{\epsilon} \\
    		\bullet \arrow{d}{\rho} & f_{12,\#} f_{12}^* R_{S_{12}}(f_{1,\#} A_1 \boxtimes_1 f_{2,\#} A_2) \arrow{urr}{\theta} \arrow{dl}{\epsilon} && R_{S_{12}}(f_{12,\#} f_{12}^* f_{12,\#}(A_1 \boxtimes_1 A_2)) \arrow{dr}{\epsilon} & \bullet \\
    		\bullet & \bullet \arrow{l}{\epsilon} \arrow{urr}{\bar{m}_1} &&& \bullet \arrow{lll}{m_1} \arrow{u}{\epsilon}
    	\end{tikzcd}
    \end{equation*}
    Here, the central piece is commutative by axiom (mor-ETS) for $(\Rcal,\theta)$, while all the remaining pieces are commutative by naturality and by construction. This concludes the proof.
\end{proof}

\begin{rem}
	In the second part of \cite[\S~8]{Ter24Fib} we studied the natural compatibility conditions (mor-$a$ETS) and (mor-$c$ETS) between a given external tensor structure on the morphism $R: \Hbb_1 \rightarrow \Hbb_2$ and two given external associativity or commutativity constraints on $\Hbb_1$ and $\Hbb_2$, respectively: condition (mor-$a$ETS) does not involve any inverse image functors, while condition (mor-$c$ETS) only involves inverse images along permutation isomorphisms. As long as $\Scal'$ contains all (permutation) isomorphisms in $\Scal$, the validity of (mor-$a$ETS) and (mor-$c$ETS) does not depend on whether one regards $\Hbb_1$ and $\Hbb_2$ as $\Scal'$-fibered or as ${\Scal'}^{op}$-fibered categories.
\end{rem}

\section{The skeleton of a morphism of fibered categories}\label{sect_mor-skel}

In this section we focus on morphisms of $\Scal$-fibered categories: the main goal is to show how a given morphism can be recovered from a smaller amount of data involving inverse images under two special classes of morphisms only. 

\subsection{Axiomatics of factorization systems}

In the first place, we have to spell out the precise conditions on the two classes of morphisms involved in order to get good factorization properties for morphisms of $\Scal$. The assumption that $\Scal$ admits arbitrary finite products is not used for the results of this section; we do need, however, to require at least certain fibered products to exist in $\Scal$. In detail:

\begin{hyp}\label{hyp_skel}
	We assume to be given two (possibly non-full) subcategories $\Scal^{sm}$ and $\Scal^{cl}$ of $\Scal$ satisfying the following conditions:
	\begin{enumerate}
		\item[(i)] Both $\Scal^{sm}$ and $\Scal^{cl}$ have the same objects as $\Scal$ and contain all isomorphisms. 
		\item[(ii)] Let $p: P \rightarrow S$ be a morphism in $\Scal^{sm}$. Then, for every morphism $f: T \rightarrow S$ in $\Scal$, the fibered product $P_T := P \times_S T$ exists in $\Scal$. Moreover, for every Cartesian square in $\Scal$ of the form
		\begin{equation*}
			\begin{tikzcd}
				P_T \arrow{r}{f'} \arrow{d}{p'} & P \arrow{d}{p} \\
				T \arrow{r}{f} & S
			\end{tikzcd}
		\end{equation*} 
		with $p$ in $\Scal^{sm}$, the morphism $p'$ belongs to $\Scal^{sm}$ as well. 
		\item[(iii)] For every Cartesian square in $\Scal$ of the form
		\begin{equation*}
			\begin{tikzcd}
				P_Z \arrow{r}{z'} \arrow{d}{p'} & P \arrow{d}{p} \\
				Z \arrow{r}{z} & S
			\end{tikzcd}
		\end{equation*}
		with $p$ in $\Scal^{sm}$ and $z$ in $\Scal^{cl}$, the morphism $z'$ belongs to $\Scal^{cl}$ as well. Moreover, for every commutative diagram in $\Scal$ of the form
		\begin{equation*}
			\begin{tikzcd}
				Z \arrow{r}{z} \arrow[bend right]{dr}{gz} & S \arrow{d}{g} \\
				& V
			\end{tikzcd}
		\end{equation*}
		with $gz$ in $\Scal^{cl}$, the morphism $z$ belongs to $\Scal^{cl}$ as well.
		\item[(iv)] Every morphism $f: T \rightarrow S$ in $\Scal$ admits a factorization of the form
		\begin{equation*}
			f: T \xrightarrow{t} P \xrightarrow{p} S
		\end{equation*}
		with $t$ in $\Scal^{cl}$ and $p$ in $\Scal^{sm}$.
	\end{enumerate}
	We say that a morphism in $\Scal$ is a \textit{smooth morphism} if it belongs to $\Scal^{sm}$ and a \textit{closed immersion} if it belongs to $\Scal^{cl}$.
\end{hyp}

\begin{ex}
	The main example is when $\Scal = \Var_k$ is the category of quasi-projective algebraic varieties over a field $k$, and $\Scal^{sm}, \Scal^{cl}$ denote the usual subcategories of smooth morphisms and closed immersions, respectively. In this case, properties (i)--(iii) are well-known, while the validity of property (iv) is explained in \cite[\S~1.3.5]{Ayo07a}. 
	
	Of course, in the case $\Scal = \Var_k$ there is no reason to require only fibered products with respect to a smooth morphism to exist in condition (ii). We have chosen to use the weak formulation of (ii) not only because it suffices for all our constructions, but also in order to include the example when $\Scal = \Sm_k$ is the category of smooth quasi-projective $k$-varieties: indeed, fibered-products of smooth varieties need no be smooth in general, but they are so as soon as one of the two morphisms involved is smooth.
\end{ex}

For the rest of the present paper we work under \Cref{hyp_skel}; in \Cref{sect_mor-core} we will need to strengthen it slightly for some constructions, but for the moment we prefer to work under the minimal necessary assumptions.

Even if the results of the present section do not require any adjointability notions, we need to keep track of certain adjointability properties in view of later applications. To this end, it is convenient ot introduce some terminology:

\begin{defn}\label{defn:geom}
	\begin{enumerate}
		\item Let $\Hbb$ be an $\Scal$-fibered category.
		\begin{enumerate}
			\item We say that $\Hbb$ is \textit{smooth-adjointable} if it is left-$\Scal^{sm}$-adjointable in the sense of \Cref{defn:Sfib-adj},
			\item We say that $\Hbb$ is \textit{closed-adjointable} if it is right-$\Scal^{cl}$-adjointable in the sense of \Cref{defn:Sfib-adj}.
		\end{enumerate} 
	    \item Let $R: \Hbb_1 \rightarrow \Hbb_2$ be a morphism of $\Scal$-fibered categories.
	    \begin{enumerate}
		    \item In case $\Hbb_1$ and $\Hbb_2$ are smooth-adjointable, we say that $R$ is \textit{smooth-adjointable} if it is left-$\Scal^{sm}$-adjointable in the sense of \Cref{defn:mor-Sfib-adj}(1),
		    \item In case $\Hbb_1$ and $\Hbb_2$ are closed-adjointable, we say that $R$ is \textit{closed-adjointable} if it is right-$\Scal^{cl}$-adjointable in the sense of \Cref{defn:mor-Sfib-adj}(1).
	    \end{enumerate}
        Similarly for $\Scal$-structures on a collection $\Rcal = \left\{R_S: \Hbb_1 \rightarrow \Hbb_2\right\}_{S \in \Scal}$.
        \item Let $(\boxtimes,m)$ be an external tensor structure on an $\Scal$-fibered category $\Hbb$.
        \begin{enumerate}
        	\item In case $\Hbb$ is smooth-adjointable, we say that $(\boxtimes,m)$ is \textit{smooth-adjointable} if it is left-$\Scal^{sm}$-adjointable in the sense of \Cref{defn:ETS-adj}.
        	\item In case $\Hbb$ is closed-adjointable, we say that $(\boxtimes,m)$ is \textit{closed-adjointable} if it is left-$\Scal^{cl}$-adjointable in the sense of \Cref{defn:ETS-adj}.
        \end{enumerate}
	\end{enumerate}
\end{defn}

\subsection{Skeleta of morphisms}

For the rest of this section, we fix two $\Scal$-fibered categories $\Hbb_1$ and $\Hbb_2$ as well as a family of functors $\Rcal = \left\{R_S: \Hbb_1(S) \rightarrow \Hbb_2(S)\right\}_{S \in \Scal}$. 

Our goal is to turn $\Rcal$ into a morphism of $\Scal$-fibered categories by constructing an $\Scal$-structure $\theta$ on it; we want to achieve this in a relatively cheap way. 

The following definition is inspired by the method of \textit{exchange structures} developed by Ayoub in \cite[\S~1.2]{Ayo07a}; the main difference is that we apply it to morphisms rather than to single fibered categories, which considerably simplifies the discussion. 

\begin{defn}\label{defn:skel}
	An \textit{$\Scal$-skeleton} $(\theta^{sm},\theta^{cl})$ on $\Rcal$ is the datum of
	\begin{itemize}
		\item an $\Scal^{sm}$-structure $\theta^{sm}$ on $\Rcal$,
		\item an $\Scal^{cl}$-structure $\theta^{cl}$ on $\Rcal$
	\end{itemize}
	satisfying the following exchange condition:
	\begin{enumerate}
		\item[(ex-$\Scal$-fib)] For every commutative square in $\Scal$
		\begin{equation*}
			\begin{tikzcd}
				Q \arrow{r}{h} \arrow{d}{q} & P \arrow{d}{p} \\
				Z \arrow{r}{z} & S
			\end{tikzcd}
		\end{equation*}
		with $p,q$ smooth morphisms and $z,h$ closed immersions, the diagram of functors $\Hbb_1(S) \rightarrow \Hbb_2(Q)$
		\begin{equation*}
			\begin{tikzcd}
				q^* z^* R_S(A) \arrow{r}{\theta^{cl}} \arrow[equal]{d} & q^* R_Z (z^* A) \arrow{r}{\theta^{sm}} & R_Q (q^* z^* A) \arrow[equal]{d} \\
				h^* p^* R_S(A) \arrow{r}{\theta^{sm}} & h^* R_P (p^* A) \arrow{r}{\theta^{cl}} & R_Q (h^* p^* A)
			\end{tikzcd}
		\end{equation*}
		is commutative.
	\end{enumerate}
	In case the $\Scal$-fibered categories $\Hbb_1$ and $\Hbb_2$ are smooth-adjointable (in the sense of \Cref{defn:geom}(1)), we say that an $\Scal$-skeleton on $\Rcal$ is \textit{smooth-adjointable} if the underlying $\Scal^{sm}$-structure is left-adjointable in the sense of \Cref{defn:mor-Sfib-adj}(2).
\end{defn}

\begin{rem}\label{rem_theta-sm=thetca-cl}
	Let $f: T \rightarrow S$ be a morphism in $\Scal$ which is both a closed immersion and a smooth morphism. Recall from \cite[Rem.~1.7]{Ter24Fib} that axioms ($\Scal$-fib-0) and (mor-$\Scal$-fib) together imply formally the equality $\theta_{\id_S} = \id_{R_S}$.
	Taking this into account, condition (ex-$\Scal$-fib) applied to the commutative square
	\begin{equation*}
		\begin{tikzcd}
			T \arrow{r}{f} \arrow{d}{f} & S \arrow{d}{\id} \\
			S \arrow{r}{\id} & S
		\end{tikzcd}
	\end{equation*}
	asserts that the two natural isomorphisms $\theta_f: f^* \circ R_S \xrightarrow{\sim} R_T \circ f^*$ obtained by regarding $f$ either as a smooth morphism or as a closed immersion must in fact coincide.
\end{rem}

The following is the main result about $\Scal$-skeleta:

\begin{prop}\label{prop_mor-skel}
	Every $\Scal$-structure on $\Rcal$ determines by restriction an $\Scal$-skeleton on it. Conversely, every $\Scal$-skeleton on $\Rcal$ uniquely extends to an $\Scal$-structure on it.
	
	Moreover, if $\Hbb_1$ and $\Hbb_2$ are smooth-adjointable, then a given $\Scal$-structure on $\Rcal$ is smooth-adjointable if an only if the underlying $\Scal$-skeleton is smooth-adjointable.
\end{prop}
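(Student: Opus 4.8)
The plan is to establish the three assertions in turn, with essentially all of the work concentrated in the converse half of the second one. For the forward direction, given an $\Scal$-structure $\theta$ on $\Rcal$ I would restrict it to $\Scal^{sm}$ and to $\Scal^{cl}$, obtaining systems $\theta^{sm}$ and $\theta^{cl}$ which automatically inherit their coherence axioms. The only genuine point is the exchange condition (ex-$\Scal$-fib): for a commutative square as in \Cref{defn:skel}, the two composites $ph = zq \colon Q \to S$ agree as a single morphism $w$ of $\Scal$, and the two legs of the exchange diagram are precisely the two ways of expanding the transition isomorphism $\theta_w$ — as $\theta_{zq}$ and as $\theta_{ph}$ — into connection and transition pieces. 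They therefore coincide by the coherence of such composites recalled in \Cref{rem:theta}.

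For the converse, given an $\Scal$-skeleton $(\theta^{sm},\theta^{cl})$, I would use \Cref{hyp_skel}(iv) to factor an arbitrary $f\colon T\to S$ as $T \xrightarrow{t} P \xrightarrow{p} S$ with $t$ a closed immersion and $p$ smooth, and set $\theta_f$ equal to the composite $t^* p^* R_S \xrightarrow{\theta^{sm}_p} t^* R_P p^* \xrightarrow{\theta^{cl}_t} R_T t^* p^*$ (reading connection isomorphisms as equalities). \textbf{The main obstacle is showing that $\theta_f$ is independent of the chosen factorization.} The argument I have in mind is a common-refinement one: given two factorizations through $P_1$ and $P_2$, \Cref{hyp_skel}(ii) guarantees that $P := P_1 \times_S P_2$ exists with both projections $\pi_i$ smooth, and the universal property yields $t_{12}\colon T \to P$ with $\pi_i t_{12} = t_i$; since $\pi_1 t_{12} = t_1$ is a closed immersion, \Cref{hyp_skel}(iii) forces $t_{12}$ to be a closed immersion as well. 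It then suffices to compare each $(t_i,p_i)$ with the refinement $(t_{12},\, p_i\pi_i)$: splitting $\theta^{sm}_{p_i\pi_i}$ via the composition axiom for $\theta^{sm}$, and applying (ex-$\Scal$-fib) to the square with top $t_{12}$, bottom $t_i$, right vertical $\pi_i$ and left vertical the identity, collapses the refined composite back to the composite for $(t_i,p_i)$. Independence follows.

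Once well-definedness is secured the remaining checks of the converse are routine. To verify (mor-$\Scal$-fib) for composable $f,g$, I would factor each of them, then factor the intermediate morphism $t_g p_f$ into a closed immersion $h$ followed by a smooth morphism $l$ exactly as in the introduction, yielding a factorization of $gf$ through $h t_f$ and $p_g l$; expanding $\theta_{gf}$ by the two sub-structure coherences and applying (ex-$\Scal$-fib) to the resulting square \eqref{intro:square-PLSQ} reorganizes it into the composite of $\theta_f$ and $\theta_g$. That the extension restricts to $\theta^{sm}$ and $\theta^{cl}$ follows from the trivial factorizations $p = p\circ\id$ and $z = \id\circ z$ together with $\theta^{sm}_{\id}=\theta^{cl}_{\id}=\id$, and uniqueness is immediate because (mor-$\Scal$-fib) forces any $\Scal$-structure extending the skeleton to agree with the above composite on every factorization.

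Finally, the adjointability statement is essentially a tautology given the restriction property: smooth-adjointability of the $\Scal$-structure $\theta$ concerns only the transformations \eqref{theta'-l} attached to morphisms $f$ in $\Scal^{sm}$, and for such $f$ one has $\theta_f = \theta^{sm}_f$. Hence invertibility of \eqref{theta'-l} for all smooth $f$ holds for $\theta$ precisely when it holds for the underlying $\Scal^{sm}$-structure $\theta^{sm}$, which is exactly smooth-adjointability of the skeleton in the sense of \Cref{defn:skel}.
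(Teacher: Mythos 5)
Your proposal is correct and follows essentially the same route as the paper: restriction plus the observation that the exchange condition holds for arbitrary commutative squares by coherence; definition of $\theta_f$ via a factorization, with independence proved by dominating two factorizations by the fibered product $P_1\times_S P_2$ (this is exactly how the paper proves connectedness of $\Fact_{\Scal}(f)$ and then compares along an arrow of that category, using (ex-$\Scal$-fib) for the square with verticals $\id_T$ and the projection); verification of (mor-$\Scal$-fib) by further factoring the intermediate morphism; and the tautological adjointability argument. No gaps.
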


As the reader might expect, the main idea behind the proof is to exploit the factorization of morphisms in $\Scal$ described in \Cref{hyp_skel}(iv) as a way to define $R$-transition isomorphisms along arbitrary morphisms of $\Scal$. In order to make this idea rigorous, we need to introduce some notation:

\begin{nota}\label{nota:Fact}
	For every morphism $f: T \rightarrow S$ in $\Scal$, we introduce the \textit{factorization category} $\Fact_{\Scal}(f)$ where
	\begin{itemize}
		\item objects are triples $(P;t,p)$ where $P$ is an object of $\Scal$, $t: T \rightarrow P$ is a closed immersion, and $p: P \rightarrow S$ is a smooth morphism such that $p \circ t = f$,
		\item a morphism $q: (P';t',p') \rightarrow (P;t,p)$ is the datum of a smooth morphism $q: P' \rightarrow P$ such that $q \circ t' = t$ and $p \circ q = p'$.
	\end{itemize}
\end{nota}

In the first place, let us point out that the conditions of \Cref{hyp_skel} imply nice categorical properties of the factorization categories $\Fact_{\Scal}(f)$:

\begin{lem}\label{lem:Fact}
	For every morphism $f: T \rightarrow S$ in $\Scal$, the category $\Fact_{\Scal}(f)$ is non-empty and connected.
\end{lem}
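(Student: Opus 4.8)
The plan is to prove the two assertions in turn: non-emptiness is immediate, and all the substance lies in connectedness. Non-emptiness of $\Fact_{\Scal}(f)$ is precisely the content of \Cref{hyp_skel}(iv), since any factorization $f: T \xrightarrow{t} P \xrightarrow{p} S$ with $t$ a closed immersion and $p$ a smooth morphism is exactly an object $(P;t,p)$.

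For connectedness, I would show that any two objects $(P_1;t_1,p_1)$ and $(P_2;t_2,p_2)$ fit into a span
\[
	(P_1;t_1,p_1) \xleftarrow{\pi_1} (P;t,p) \xrightarrow{\pi_2} (P_2;t_2,p_2),
\]
which already joins them by a zigzag of length two (in fact this shows any two objects admit a common object mapping to both). The natural candidate for the apex is the fibre product $P := P_1 \times_S P_2$ formed over the smooth structure morphisms $p_1$ and $p_2$. First I would produce $P$ together with its projections $\pi_1: P \rightarrow P_1$ and $\pi_2: P \rightarrow P_2$: since $p_1$ is smooth, \Cref{hyp_skel}(ii) guarantees that $P_1 \times_S P_2$ exists in $\Scal$, and, reading $\pi_1$ as the base change of $p_2$ along $p_1$ and $\pi_2$ as the base change of $p_1$ along $p_2$, the same hypothesis shows that both projections are again smooth morphisms. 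Hence the composite $p := p_1 \pi_1 = p_2 \pi_2 : P \rightarrow S$ is smooth. Applying the universal property of the fibre product to the equality $p_1 t_1 = f = p_2 t_2$ yields a unique morphism $t: T \rightarrow P$ with $\pi_1 t = t_1$ and $\pi_2 t = t_2$; note that $p t = p_1 \pi_1 t = p_1 t_1 = f$.

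The one genuinely non-formal point, which I expect to be the crux, is checking that the induced morphism $t$ is a closed immersion. Here I would invoke the second (cancellation) clause of \Cref{hyp_skel}(iii): since $\pi_1 t = t_1$ lies in $\Scal^{cl}$, applying that clause to the triangle $T \xrightarrow{t} P \xrightarrow{\pi_1} P_1$ whose composite $t_1$ belongs to $\Scal^{cl}$ forces $t$ itself to lie in $\Scal^{cl}$. This establishes that $(P;t,p)$ is an object of $\Fact_{\Scal}(f)$. Finally, $\pi_1$ and $\pi_2$ are smooth morphisms satisfying $\pi_1 t = t_1$, $p_1 \pi_1 = p$ and $\pi_2 t = t_2$, $p_2 \pi_2 = p$, so by \Cref{nota:Fact} they are morphisms of factorizations $(P;t,p) \rightarrow (P_i;t_i,p_i)$ for $i = 1,2$. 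This produces the desired span and proves connectedness.

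I do not anticipate any other obstacle: the existence of $P$ and smoothness of the projections are formal consequences of the stability properties in (ii), and the only step that genuinely uses the specific shape of the hypotheses — rather than mere stability under composition and base change — is the verification that $t$ is a closed immersion, which is exactly what the cancellation property in (iii) is designed to supply.
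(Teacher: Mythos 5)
Your proposal is correct and follows essentially the same route as the paper: both establish connectedness by showing that any two factorizations are dominated by the fibre product $P_1 \times_S P_2$ over $S$, whose existence and smooth projections come from \Cref{hyp_skel}(ii) and whose induced map from $T$ is a closed immersion by the cancellation clause of \Cref{hyp_skel}(iii). You merely spell out the verification of that last point, which the paper leaves implicit.
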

\begin{proof}
	The fact that $\Fact_{\Scal}(f)$ is non-empty is \Cref{hyp_skel}(iv). To see that it is connected, we prove a more precise result, namely: every two objects $(P;t,p)$ and $(P';t',p')$ in $\Fact_{\Scal}(f)$ are dominated by a third one $(P'';t'',p'')$. Indeed, it suffices to take $P'' = P \times_S P'$ (which exists in $\Scal$ by \Cref{hyp_skel}(ii)) together with the obvious morphisms $t'': T \rightarrow P''$ (which is a closed immersion by \Cref{hyp_skel}(iii)) and $p'': P'' \rightarrow S$ (which is a smooth morphism by \Cref{hyp_skel}(ii)). 
\end{proof}

\begin{proof}[Proof of \Cref{prop_mor-skel}]
	Suppose that we are given an $\Scal$-structure $\theta$ on $\Rcal$, and let us check that its restrictions to $\Scal^{sm}$ and $\Scal^{cl}$ define an $\Scal$-skeleton. Since axioms (mor-$\Scal^{sm}$-fib) and (mor-$\Scal^{cl}$-fib) are both special cases of axiom (mor-$\Scal$-fib), we only need to check that condition (ex-$\Scal$-fib) holds: given a commutative square in $\Scal$
	\begin{equation*}
		\begin{tikzcd}
			Q \arrow{r}{h} \arrow{d}{q} & P \arrow{d}{p} \\
			Z \arrow{r}{z} & S
		\end{tikzcd}
	\end{equation*}
	with $p,q$ smooth morphisms and $z,h$ closed immersions, we have to show that the diagram of functors $\Hbb_1(S) \rightarrow \Hbb_2(Q)$
	\begin{equation*}
		\begin{tikzcd}
			q^* z^* R_S(A) \arrow{r}{\theta} \arrow[equal]{d} & q^* R_Z (z^* A) \arrow{r}{\theta} & R_Q (q^* z^* A) \arrow[equal]{d} \\
			h^* p^* R_S(A) \arrow{r}{\theta} & h^* R_P (p^* A) \arrow{r}{\theta} & R_Q (h^* p^* A)
		\end{tikzcd}
	\end{equation*}
	is commutative. We claim that, in fact, the conclusion is true with no hypothesis on the morphisms $z,h,p,q$. Indeed, if $f$ denotes the composite morphism $p \circ h = z \circ q$, the above diagram coincides by definition with the outer rectangle of the diagram
	\begin{equation*}
		\begin{tikzcd}[font=\small]
			q^* z^* R_S(A) \arrow{r}{\theta} \arrow[equal]{d} & q^* R_Z (z^* A) \arrow{r}{\theta} & R_Q(q^* z^* A) \arrow[equal]{d} \\
			f^* R_S(A) \arrow{rr}{\theta} && R_Q(f^* A) \\
			h^* p^* R_S(A) \arrow[equal]{u} \arrow{r}{\theta} & h^* R_P(p^* A) \arrow{r}{\theta} & R_Q(h^* p^* A) \arrow[equal]{u}
		\end{tikzcd}
	\end{equation*}
	where both pieces are commutative by axiom (mor-$\Scal$-fib).
	
	Conversely, suppose that we are given an $\Scal$-skeleton $(\theta^{sm},\theta^{cl})$ on $\Rcal$, and let us show that it extends uniquely to an $\Scal$-structure $\theta$: we have to construct $R$-transition isomorphisms along arbitrary morphisms of $\Scal$ and check that they satisfy condition (mor-$\Scal$-fib). We divide the construction into two main steps:
	\begin{enumerate}
		\item[(Step 1)] To begin with, fix an arrow $f: T \rightarrow S$ in $\Scal$. For every object $(P;t,p) \in \Fact_{\Scal}(f)$, consider the natural isomorphism of functors $\Hbb_1(S) \rightarrow \Hbb_2(T)$
		\begin{equation*}
			\theta_f^{(t,p)}: f^* R_S(A) = t^* p^* R_S(A) \xrightarrow{\theta^{sm}_p} t^* R_P(p^* A) \xrightarrow{\theta^{cl}_t} R_T(t^* p^* A) = R_T(f^* A).
		\end{equation*}
		We see directly from axiom (mor-$\Scal$-fib) that, if an $\Scal$-structure on $\Rcal$ extending the data exists, then the $R$-transition isomorphism $\theta_f$ necessarily coincides with $\theta_f^{(t,p)}$: this implies that the sought-after $\Scal$-structure $\theta$ on $\Rcal$ is unique, provided it exists.
		
		We claim that the natural isomorphism $\theta_f^{(t,p)}$ does not depend on the choice of $(P;t,p) \in \Fact_{\Scal}(f)$. To this end, by the connectedness of $\Fact_{\Scal}(f)$ (\Cref{lem:Fact}), it suffices to show that, given an arrow $q: (P';t',p') \rightarrow (P;t,p)$ in $\Fact_{\Scal}(f)$, the equality $\theta_f^{(t',p')} = \theta_f^{(t,p)}$ holds. By definition, this amounts to showing that the outer part of the diagram
		\begin{equation*}
			\begin{tikzcd}[font=\small]				
				& t^* p^* R_S(A) \arrow{r}{\theta^{sm}} \arrow[equal]{d} & t^* R_P(p^* A) \arrow{rr}{\theta^{cl}} \arrow[equal]{d} && R_T(t^* p^* A) \arrow[equal]{d} \\
				f^* R_S(A) \arrow[equal]{ur} \arrow[equal]{dr} & {t'}^* q^* p^* R_S(A) \arrow{r}{\theta^{sm}} & {t'}^* q^* R_P(p^* A) \arrow{r}{\theta^{sm}} & {t'}^* R_{P'}(q^* p^* A) \arrow{r}{\theta^{cl}} & R_T({t'}^* q^* p^* A) & R_S(f^* A) \arrow[equal]{ul} \arrow[equal]{dl} \\
				& {t'}^* {p'}^* R_S(A) \arrow{rr}{\theta^{sm}} \arrow[equal]{u} && {t'}^* R_{P'}({p'}^* A) \arrow{r}{\theta^{cl}} \arrow[equal]{u} & R_T({t'}^* {p'}^* A) \arrow[equal]{u}
			\end{tikzcd}
		\end{equation*}
		is commutative. But the left-most and right-most pieces are commutative by \cite[Lemma~1.3]{Ter24Fib}, the two squares are commutative by naturality, the lower-left rectangle is commutative by axiom (mor-$\Scal^{sm}$-fib), while the upper-right rectangle is commutative by axiom (ex-$\Scal$-fib) applied to the commutative square
		\begin{equation*}
			\begin{tikzcd}
				T \arrow{r}{t'} \arrow{d}{\id_T} & P' \arrow{d}{q} \\
				T \arrow{r}{t} & P
			\end{tikzcd}
		\end{equation*}
		taking into account the equality $\theta_{\id_T} = \id_{R_T}$ (\Cref{rem:theta}). Thus we get a well-defined natural isomorphism of functors $\Hbb_1(S) \rightarrow \Hbb_2(T)$
		\begin{equation}\label{theta_f:proof}
			\theta = \theta_f: f^* R_S(A) \xrightarrow{\sim} R_T(f^* A).
		\end{equation}
		\item[(Step 2)] In order to prove that the natural isomorphisms \eqref{theta_f:proof} define an $\Scal$-structure on $\Rcal$, we need to check that they satisfy axiom (mor-$\Scal$-fib): given two composable morphisms $f: T \rightarrow S$ and $g: S \rightarrow V$ in $\Scal$, we have to show that the diagram of functors $\Hbb_1(V) \rightarrow \Hbb_2(T)$
		\begin{equation*}
			\begin{tikzcd}
				(gf)^* R_V(A) \arrow{rr}{\theta} \arrow[equal]{d} && R_T ((gf)^* A) \arrow[equal]{d} \\
				f^* g^* R_V(A) \arrow{r}{\theta} & f^* R_S (g^* A) \arrow{r}{\theta} & R_T (f^* g^* A)
			\end{tikzcd}
		\end{equation*}
		is commutative. To this end, choose factorizations $(P;t,p) \in \Fact_{\Scal}(f)$ and $(Q;s,q) \in \Fact_{\Scal}(g)$, and then choose a further factorization $(L;h,l) \in \Fact_{\Scal}(s \circ p)$; in this way, we also obtain the factorization $(L;ht,ql) \in \Fact_{\Scal}(g \circ f)$.
		Expanding the definition of $\theta_f = \theta_f^{(t,p)}$ and $\theta_g = \theta_g^{(s,q)}$, we can express the above diagram more explicitly as the outer part of the diagram
		\begin{equation*}
			\begin{tikzcd}[font=\small]
				(gf)^* R_V(A) \arrow[equal]{r} & (ht)^* (ql)^* R_V(A) \arrow{r}{\theta^{sm}} & (ht)^* R_L((ql)^* A) \arrow{r}{\theta^{cl}} & R_T((ht)^* (ql)^* A) & R_T((gf)^* A) \arrow[equal]{l} \\
				f^* g^* R_V(A) \arrow[equal]{u} \arrow[equal]{d} & t^* h^* l^* q^* R_V(A) \arrow[equal]{u} \arrow[equal]{dl} && R_T(t^* h^* l^* q^* A) \arrow[equal]{u} \arrow[equal]{dr} & R_T(f^* g^* A) \arrow[equal]{u} \arrow[equal]{d} \\
				t^* p^* s^* q^* R_V(A) \arrow{r}{\theta^{sm}} & t^* p^* s^* R_Q(q^* A) \arrow{r}{\theta^{cl}} & t^* p^* R_S(s^* q^* A) \arrow{r}{\theta^{sm}} & t^* R_P(p^* s^* q^* A) \arrow{r}{\theta^{cl}} & R_T(t^* p^* s^* q^* A)	
			\end{tikzcd}
		\end{equation*}
		where the two lateral pieces are commutative by \cite[Lemma~1.3]{Ter24Fib}. Hence we are reduced to showing the commutativity of the central piece of the diagram. The latter is the outer rectangle of the diagram
		\begin{equation*}
			\begin{tikzcd}[font=\small]
				(ht)^* (ql)^* R_V(A) \arrow{rr}{\theta^{sm}} && (ht)^* R_L((ql)^* A) \arrow{rr}{\theta^{cl}} && R_T((ht)^* (ql)^* A) \\
				t^* h^* l^* q^* R_V(A) \arrow{r}{\theta^{sm}} \arrow[equal]{u} \arrow[equal]{d} & t^* h^* l^* R_Q(q^* A) \arrow{r}{\theta^{sm}} \arrow[equal]{d} & t^* h^* R_L(l^* q^* A) \arrow{r}{\theta^{cl}} \arrow[equal]{u} & t^* R_P(h^* l^* q^* A) \arrow{r}{\theta^{cl}} \arrow[equal]{d} & R_T(t^* h^* l^* q^* A) \arrow[equal]{u} \arrow[equal]{d} \\
				t^* p^* s^* q^* R_V(A) \arrow{r}{\theta^{sm}} & t^* p^* s^* R_Q(q^* A) \arrow{r}{\theta^{cl}} & t^* p^* R_S(s^* q^* A) \arrow{r}{\theta^{sm}} & t^* R_P(p^* s^* q^* A) \arrow{r}{\theta^{cl}} & R_T(t^* p^* s^* q^* A)
			\end{tikzcd}
		\end{equation*}
		where the two upper pieces are commutative by axioms (mor-$\Scal^{sm}$-fib) and (mor-$\Scal^{cl}$-fib), the two lateral lower pieces are commutative by naturality, and the central lower piece is commutative by axiom (ex-$\Scal$-fib).
	\end{enumerate}
	Finally, the fact that an $\Scal$-structure on the family $\Rcal$ is smooth-adjointable if and only if so is the underlying $\Scal$-skeleton is obvious in view of \Cref{rem_theta-sm=thetca-cl}. This concludes the proof.
\end{proof}

We conclude with the following observation, which will be useful in the course of the next section:

\begin{lem}\label{lem_Scl-str}
	Let $\theta^{sm}$ and $\theta^{cl}$ be an $\Scal^{sm}$-structure and an $\Scal^{cl}$-structure on $\Rcal$. Then the pair $(\theta^{sm},\theta^{cl})$ defines an $\Scal$-skeleton if and only if it satisfies the following two conditions:
	\begin{enumerate}
		\item[(C-$\Scal$-fib)] For every Cartesian square in $\Scal$
		\begin{equation*}
			\begin{tikzcd}
				P_Z \arrow{r}{z'} \arrow{d}{p'} & P \arrow{d}{p} \\
				Z \arrow{r}{z} & S
			\end{tikzcd}
		\end{equation*}
		with $p$ a smooth morphism and $z$ a closed immersion, the diagram of functors $\Hbb_1(S) \rightarrow \Hbb_2(P_Z)$
		\begin{equation*}
			\begin{tikzcd}
				{p'}^* z^* R_S(A) \arrow{r}{\theta^{cl}} \arrow[equal]{d} & {p'}^* R_Z (z^* A) \arrow{r}{\theta^{sm}} & R_{P_Z} ({p'}^* z^* A) \arrow[equal]{d} \\
				{z'}^* p^* R_S(A) \arrow{r}{\theta^{sm}} & {z'}^* R_P (p^* A) \arrow{r}{\theta^{sm}} & R_{P_Z} ({z'}^* p^* A)
			\end{tikzcd}
		\end{equation*}
		is commutative.
		\item[(T-$\Scal$-fib)] For every commutative triangle in $\Scal$
		\begin{equation*}
			\begin{tikzcd}
				Q \arrow{r}{h} \arrow{dr}{q} & P \arrow{d}{p} \\
				& S
			\end{tikzcd}
		\end{equation*}
		with $p,q$ smooth morphisms and $h$ a closed immersion, the diagram of functors $\Hbb_1(S) \rightarrow \Hbb_2(Q)$
		\begin{equation*}
			\begin{tikzcd}
				q^* R_S(A) \arrow{rr}{\theta^{sm}} \arrow[equal]{d} && R_Q (q^* A) \arrow[equal]{d} \\
				h^* p^* R_S(A) \arrow{r}{\theta^{sm}} & h^* R_P (p^* A) \arrow{r}{\theta^{cl}} & R_S (h^* p^* A)
			\end{tikzcd}
		\end{equation*}
		is commutative.
	\end{enumerate}
\end{lem}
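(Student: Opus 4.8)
The forward implication is immediate, since both \textup{(C-$\Scal$-fib)} and \textup{(T-$\Scal$-fib)} are special instances of \textup{(ex-$\Scal$-fib)}: the former is \textup{(ex-$\Scal$-fib)} applied to the Cartesian square itself (where $z'$ is a closed immersion by \Cref{hyp_skel}(iii) and $p'$ is smooth by \Cref{hyp_skel}(ii)), while the latter is \textup{(ex-$\Scal$-fib)} applied to the square obtained from the triangle by taking $z = \id_S$, using that $\theta^{cl}_{\id_S} = \id$. I would therefore devote the real work to the converse: assuming \textup{(C-$\Scal$-fib)} and \textup{(T-$\Scal$-fib)}, I would establish \textup{(ex-$\Scal$-fib)} for an arbitrary commutative square with edges $h: Q \to P$, $q: Q \to Z$, $p: P \to S$, $z: Z \to S$, where $p,q$ are smooth and $z,h$ are closed immersions. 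The guiding idea is to factor such a square as a Cartesian square followed by a triangle.

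To set this up, since $p$ is smooth, \Cref{hyp_skel}(ii) provides the fibered product $P_Z := P \times_S Z$ with smooth projection $p': P_Z \to Z$; by \Cref{hyp_skel}(iii) the other projection $z': P_Z \to P$ is a closed immersion, and $p z' = z p'$. The relation $p h = z q$ induces a unique morphism $u: Q \to P_Z$ satisfying $z' u = h$ and $p' u = q$. The crucial point is that $u$ is itself a closed immersion: this follows from the second clause of \Cref{hyp_skel}(iii) applied to the factorization $h = z' u$, since $h$ is a closed immersion. We thereby obtain a Cartesian square $(p',z')$ over $(z,p)$ --- an instance of \textup{(C-$\Scal$-fib)} --- together with the commutative triangle $Q \xrightarrow{u} P_Z \xrightarrow{p'} Z$ over $Z$, in which $u$ is closed and $p'$, $q = p' u$ are smooth --- an instance of \textup{(T-$\Scal$-fib)}.

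With the decomposition in place, I would expand the two paths of the exchange diagram for the original square along $q = p' u$ and $h = z' u$. On the top path I would invoke \textup{(T-$\Scal$-fib)} for the triangle to rewrite $\theta^{sm}_q = \theta^{cl}_u \circ \theta^{sm}_{p'}$, and on the bottom path I would invoke the composition axiom \textup{(mor-$\Scal^{cl}$-fib)} for $\theta^{cl}$ to rewrite $\theta^{cl}_h = \theta^{cl}_u \circ \theta^{cl}_{z'}$. Since $p z' = z p'$ forces the identification $(p')^* z^* = (z')^* p^*$, after this rewriting both composites terminate with the same map $\theta^{cl}_u$, and their equality reduces to that of the two middle composites
\[
	(p')^* z^* R_S(A) \xrightarrow{\theta^{cl}_z} (p')^* R_Z(z^* A) \xrightarrow{\theta^{sm}_{p'}} R_{P_Z}((p')^* z^* A)
\]
and
\[
	(z')^* p^* R_S(A) \xrightarrow{\theta^{sm}_p} (z')^* R_P(p^* A) \xrightarrow{\theta^{cl}_{z'}} R_{P_Z}((z')^* p^* A).
\]
But this is exactly \textup{(C-$\Scal$-fib)} for the Cartesian square, so the converse follows.

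The one genuinely non-formal step --- and the place I expect the difficulty to concentrate --- is the verification that the induced map $u$ is a closed immersion, which is precisely where the cancellation clause of \Cref{hyp_skel}(iii) is needed; without it the triangle piece would not fall under \textup{(T-$\Scal$-fib)}. Everything else is bookkeeping with connection isomorphisms (treated as equalities throughout this section): one must only check that the two composites honestly share the common tail $\theta^{cl}_u$ and that the functor identification $(p')^* z^* = (z')^* p^*$ used is the one supplied by the Cartesian square. Once this is arranged, the reduction to \textup{(C-$\Scal$-fib)} is forced.
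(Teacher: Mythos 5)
Your proposal is correct and follows essentially the same route as the paper's proof: the forward direction is the same specialization argument, and for the converse the paper likewise forms the fibered product $P_Z$, observes that the induced map $i\colon Q \to P_Z$ is a closed immersion by the cancellation clause of \Cref{hyp_skel}(iii), and then reduces the general exchange square to (C-$\Scal$-fib) for the Cartesian part via (T-$\Scal$-fib) for the triangle $Q \to P_Z \to Z$ and the composition axiom (mor-$\Scal^{cl}$-fib) for $h = z' i$, with the remaining pieces handled by naturality and coherence of connection isomorphisms. No gaps.
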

\begin{proof}
	By definition, the pair $(\theta^{sm},\theta^{cl})$ defines an $\Scal$-skeleton if and only if it satisfies condition (ex-$\Scal$-fib) from \Cref{defn:skel}.	
	
	Condition (C-$\Scal$-fib) is the restriction of (ex-$\Scal$-fib) to the class of Cartesian squares, while condition (T-$\Scal$-fib) is equivalent to the restrictions of (ex-$\Scal$-fib) to the class of squares of the form
	\begin{equation*}
		\begin{tikzcd}
			Q \arrow{r}{h} \arrow{d}{q} & P \arrow{d}{p} \\
			S \arrow{r}{\id_S} & S.
		\end{tikzcd}
	\end{equation*}
	Thus condition (ex-$\Scal$-fib) trivially implies both (C-$\Scal$-fib) and (T-$\Scal$-fib).
	
	Conversely, suppose that the latter two conditions are satisfied, and let us show that condition (ex-$\Scal$-fib) holds: given a commutative square
	\begin{equation*}
		\begin{tikzcd}
			Q \arrow{r}{h} \arrow{d}{q} & P \arrow{d}{p} \\
			Z \arrow{r}{z} & S
		\end{tikzcd}
	\end{equation*}
	with $z,h$ closed immersions and $p,q$ smooth morphisms, we have to show that the diagram of functors $\Hbb_1(S) \rightarrow \Hbb_2(Q)$
	\begin{equation}\label{dia:qzhp}
		\begin{tikzcd}
			q^* z^* R_S(A) \arrow{r}{\theta^{cl}} \arrow[equal]{d} & q^* R_Z (z^* A) \arrow{r}{\theta^{sm}} & R_Q (q^* z^* A) \arrow[equal]{d} \\
			h^* p^* R_S(A) \arrow{r}{\theta^{sm}} & h^* R_P (p^* A) \arrow{r}{\theta^{cl}} & R_Q (h^* p^* A)
		\end{tikzcd}
	\end{equation} 
    is commutative. To this end, form the Cartesian square
	\begin{equation*}
		\begin{tikzcd}
			P_Z \arrow{r}{z'} \arrow{d}{p'} & P \arrow{d}{p} \\
			Z \arrow{r}{z} & S
		\end{tikzcd}
	\end{equation*}
	(which exists by \Cref{hyp_skel}(ii)) and let $i: Q \rightarrow P_Z$ be the morphism induced by $q$ and $h$ (which is a closed immersion by \Cref{hyp_skel}(iii)); in this way, we obtain the commutative diagram
	\begin{equation*}
		\begin{tikzcd}
			Q \arrow[bend right]{ddr}{q} \arrow[bend left]{drr}{h} \arrow{dr}{i} \\
			& P_Z \arrow{r}{z'} \arrow{d}{p'} & P \arrow{d}{p} \\
			& Z \arrow{r}{z} & S.
		\end{tikzcd}
	\end{equation*}
    We can then decompose the diagram \eqref{dia:qzhp} above as
    \begin{equation*}
    	\begin{tikzcd}[font=\small]
    		\bullet \arrow[equal]{ddd} \arrow[equal]{dr} \arrow{rr}{\theta^{cl}} && \bullet \arrow{rrr}{\theta^{sm}} \arrow[equal]{d} &&& \bullet \arrow[equal]{ddd} \arrow[equal]{dl} \\
    		& i^* {p'}^* z^* R_S(A) \arrow{r}{\theta^{cl}} \arrow[equal]{d} & i^* {p'}^* R_Z(z^* A) \arrow{r}{\theta^{sm}} & i^* R_{P_Z} ({p'}^* z^* A) \arrow{r}{\theta^{cl}} \arrow[equal]{d} & R_Q (i^* {p'}^* z^* A) \arrow[equal]{d} \\
    		& i^* {z'}^* p^* R_S(A) \arrow{r}{\theta^{sm}} & i^* {z'}^* R_P (p^* A) \arrow{r}{\theta^{cl}} & i^* R_{P_Z} ({z'}^* p^* A) \arrow{r}{\theta^{cl}} & R_Q (i^* {z'}^* p^* A) \\
    		\bullet \arrow[equal]{ur} \arrow{rr}{\theta^{sm}} && \bullet \arrow{rrr}{\theta^{cl}} \arrow[equal]{u} &&& \bullet \arrow[equal]{ul}
    	\end{tikzcd}
    \end{equation*}
	Here, the left-most and right-most pieces are commutative by \cite[Lemma~1.3]{Ter24Fib}, the upper-left and lower-left pieces are commutative by naturality, the upper-right piece is commutative by axiom (T-$\Scal$-fib), the lower-right piece is commutative by axiom ($\Scal^{cl}$-fib-1), the central rectangle is commutative by axiom (T-$\Scal$-fib), and the central square is commutative again by naturality. This concludes the proof.
\end{proof}

\section{The core of a localic morphism of fibered categories}\label{sect_mor-core}

In this section, we introduce a variant of the notion of $\Scal$-skeleton which is more adapted to the setting of perverse sheaves: we want to rephrase the axioms of $\Scal$-skeleta in such a way that inverse images under closed immersions are replaced by the corresponding direct images. It turns out that one still needs to use inverse images under closed immersions at some point; however, this does not cause problems in our applications.

\subsection{Localic fibered categories and localic morphisms}

First of all, we need to introduce some complements to the previous hypotheses about the base category $\Scal$ and its subcategories $\Scal^{cl}$ and $\Scal^{sm}$. In detail:

\begin{hyp}\label{hyp_core}
	We keep the assumptions and notation of \Cref{hyp_skel}. In addition, we assume the following:
	\begin{enumerate}
		\item[(i)] The category $\Scal$ has an initial object $\emptyset$. 
		\item[(ii)] For every closed immersion $z: Z \rightarrow S$, there exists a distinguished smooth morphism $u: U \rightarrow S$ with the property that, for every $T \in \Scal$, the map
		\begin{equation*}
			u \circ -: \Hom_{\Scal}(T,U) \rightarrow \Hom_{\Scal}(T,S)
		\end{equation*} 
		identifies $\Hom_{\Scal}(T,U)$ with the subset of $\Hom_{\Scal}(T,S)$ consisting of all those morphisms $f: T \rightarrow S$ for which the diagram
		\begin{equation*}
			\begin{tikzcd}
				\emptyset \arrow{r} \arrow{d} & T \arrow{d}{f} \\
				Z \arrow{r}{z} & S		
			\end{tikzcd}
		\end{equation*}
		is Cartesian. We call $u$ the \textit{open immersion} complementary to $z$, and we call $U$ the \textit{open complement} of $Z$ in $S$.
	\end{enumerate}
\end{hyp}

\begin{rem}\label{rem:Hloc-u-mono}
	Note that, for every closed immersion $z: Z \rightarrow S$ in $\Scal$, the complementary open immersion $u: U \rightarrow S$ is a monomorphism in $\Scal$ by definition.
\end{rem}

\begin{ex}
	As for \Cref{hyp_skel}, the main examples of interest for us are when $\Scal$ is one among the category $\Var_k$ of quasi-projective $k$-varieties or its subcategory $\Sm_k$ of smooth $k$-varieties; in both cases, $\Scal^{sm}$ and $\Scal^{cl}$ are the subcategories of smooth morphisms and closed immersions, respectively. 
\end{ex}

Throughout this section, we work under \Cref{hyp_skel} and \Cref{hyp_core}. 

While the results of \Cref{sect_mor-skel} apply to general $\Scal$-fibered categories, the results presented below are specific to the setting of triangulated $\Scal$-fibered categories (as axiomatized in \cite[\S~9]{Ter24Fib}). In fact, our main applications concern the theory of stable homotopy $2$-functors developed in \cite{Ayo07a} and its variant based on coefficient systems in \cite{DrewGal}. In order not to discuss the specifics of these theories here, we have chosen to summarize the part of the axioms of \cite[Defn.~1.4.1]{Ayo07a} revolving around open and closed immersions in the following self-contained definition:

\begin{defn}\label{defn_Hlocal}
	We say that a triangulated $\Scal$-fibered category $\Hbb$ is \textit{localic} if it satisfies the following conditions:
	\begin{enumerate}
		\item[(a)] $\Hbb(\emptyset)$ is the zero triangulated category.
		\item[(b)] $\Hbb$ is smooth-adjointable in the sense of \Cref{defn:geom}(1)(a). Moreover, for every Cartesian square in $\Scal$
		\begin{equation*}
			\begin{tikzcd}
				P_T \arrow{r}{f'} \arrow{d}{p'} & P \arrow{d}{p} \\
				T \arrow{r}{f} & S
			\end{tikzcd}
		\end{equation*}
		with $p$ a smooth morphism, the natural transformation of functors $\Hbb(P) \rightarrow \Hbb(T)$
		\begin{equation*}
			p'_{\#} {f'}^* A \xrightarrow{\eta} p'_{\#} {f'}^* p^* p_{\#} A = p'_{\#} {p'}^* f^* p_{\#} A \xrightarrow{\epsilon} f^* p_{\#} A
		\end{equation*}
		is invertible.
		\item[(c)] $\Hbb$ is closed-adjointable in the sense of \Cref{defn:geom}(1)(b). Moreover, for every closed immersion $z: Z \rightarrow S$ in $\Scal$, the functor $z_*: \Hbb(Z) \rightarrow \Hbb(S)$ is fully faithful.
		\item[(d)] For every closed immersion $z: Z \rightarrow S$ in $\Scal$ with complementary open immersion $u: U \rightarrow S$, the pair of functors $(z^*,u^*)$ is conservative.
	\end{enumerate}
	Via \Cref{lem_H-adjoint}, we also regard the underlying $\Scal^{sm}$-fibered category and the underlying $\Scal^{cl}$-fibered category of $\Hbb$ as an $\Scal^{sm,op}$-fibered category and an $\Scal^{cl,op}$-fibered category, respectively.
\end{defn}

\begin{ex}\label{ex:st2f}
	In the case where $\Scal = \Var_k$ (with $\Scal^{cl}$ and $\Scal^{sm}$ the subcategories of closed immersions and smooth morphisms, respectively), the underlying $\Scal$-fibered category of any stable homotopy $2$-functor is localic in virtue of the so-called \textit{localization axiom}; see \cite[\S~1.4.1]{Ayo07a}. Our axiomatic approach allows us to treat the localization properties of 'stable homotopy $2$-functors over $\Sm_k$' as well.
\end{ex}

The next lemma summarizes the basic properties of localic $\Scal$-fibered categories. In the setting of stable homotopy $2$-functors, all these results are established in \cite[\S~1.4.4]{Ayo07a}; we just rearrange the argument appearing there in order to make sure that the same results hold in our setting.

\begin{lem}\label{lem_Hlocal}
	Let $\Hbb$ be a localic triangulated $\Scal$-fibered category. Let $z: Z \rightarrow S$ be a closed immersion in $\Scal$ with complementary open immersion $u: U \rightarrow S$. Then:
	\begin{enumerate}
		\item The functor $u_{\#}: \Hbb(U) \rightarrow \Hbb(S)$ is fully faithful.
		\item The composite functors $z^* \circ u_{\#}$ and $u^* \circ z_*$ vanish.
		\item There exists a canonical localization distinguished triangle of functors $\Hbb(S) \rightarrow \Hbb(S)$
		\begin{equation*}
			u_{\#} u^* A \xrightarrow{\epsilon} A \xrightarrow{\eta} z_* z^* A \xrightarrow{+}.
		\end{equation*}
		\item For every Cartesian square in $\Scal$ of the form
		\begin{equation*}
			\begin{tikzcd}
				P_Z \arrow{r}{z'} \arrow{d}{p_Z} & P \arrow{d}{p} \\
				Z \arrow{r}{z} & S
			\end{tikzcd}
		\end{equation*}
		with $p$ a smooth morphism and $z$ a closed immersion, the natural transformation of functors $\Hbb(Z) \rightarrow \Hbb(P)$
		\begin{equation*}
			p^* z_* A \xrightarrow{\eta} z'_* {z'}^* p^* z_* A = z'_* p_Z^* z^* z_* A \xrightarrow{\epsilon} z'_* p_Z^* A
		\end{equation*}
		is invertible.
	\end{enumerate}
\end{lem}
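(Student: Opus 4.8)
The plan is to derive all four statements from the four defining conditions (a)--(d) of a localic triangulated $\Scal$-fibered category (\Cref{defn_Hlocal}), using the smooth base-change isomorphism of condition (b) as the main engine and the conservativity of $(z^*,u^*)$ in condition (d) to detect isomorphisms. I expect the logical order to be (1), (2), (3), (4), with (3) relying on (1) and (2) and (4) relying only on (2). For (1), I would first observe that, since $u$ is a monomorphism (\Cref{rem:Hloc-u-mono}), the fibered product $U \times_S U$ is $U$ itself with both projections equal to $\id_U$; hence the square with corners $U,U,U,S$ whose non-$u$ edges are identities is Cartesian. Unwinding the smooth base-change transformation of condition (b) for this square identifies it with the unit $\id \to u^* u_{\#}$ of the adjunction $u_{\#} \dashv u^*$, so condition (b) forces this unit to be invertible, which is exactly the assertion that $u_{\#}$ is fully faithful.

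For (2), I would compute $z^* u_{\#}$ via smooth base change applied to the pullback of $u$ along $z$: by the defining property of the open complement (\Cref{hyp_core}(ii)) this fibered product is the initial object $\emptyset$, and since $\Hbb(\emptyset) = 0$ by condition (a), condition (b) exhibits $z^* u_{\#}$ as a functor factoring through $\Hbb(\emptyset)$, whence $z^* u_{\#} = 0$. The vanishing $u^* z_* = 0$ then follows formally by a double adjunction: for every $B$ one has $\Hom(B, u^* z_* A) \cong \Hom(u_{\#}B, z_* A) \cong \Hom(z^* u_{\#} B, A) = 0$, and taking $B = u^* z_* A$ forces $u^* z_* A = 0$.

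For (3), the one genuinely nonformal point, I would first note that $\eta \circ \epsilon = 0$ because $\Hom(u_{\#}u^* A, z_* z^* A) \cong \Hom(z^* u_{\#} u^* A, z^* A) = 0$ by part (2). Completing $\epsilon$ to a distinguished triangle $u_{\#}u^*A \xrightarrow{\epsilon} A \xrightarrow{q} C \xrightarrow{+}$, the natural map $\eta$ therefore factors as $h \circ q$ for some $h\colon C \to z_* z^* A$. To see that $h$ is invertible I would apply $z^*$ and $u^*$ and invoke conservativity (d): applying $u^*$ uses part (1) — so $u^*\epsilon$ is an isomorphism, whence $u^* C = 0 = u^* z_* z^* A$ by part (2) — while applying $z^*$ uses part (2) — so $z^* q$ is an isomorphism — together with $z_*$ fully faithful from condition (c), so that $z^*\eta$ is an isomorphism by the triangle identity. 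Then $z^*h$ and $u^*h$ are both isomorphisms, hence so is $h$, and the triangle is distinguished; the naturality of $\epsilon$ and $\eta$ promotes this to a triangle of functors.

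For (4), I would first identify the relevant complementary open immersion: setting $V := U \times_S P$ with projection $v\colon V \to P$ and second projection $w\colon V \to U$, the characterization in \Cref{hyp_core}(ii) together with $P_Z = P \times_S Z$ and the identity $T \times_P P_Z \cong T \times_S Z$ shows that $v$ is the open immersion complementary to $z'$ in $P$ (here $z'$ is a closed immersion by \Cref{hyp_skel}(iii) and $p_Z$ is smooth by \Cref{hyp_skel}(ii)). The exchange transformation $\mathrm{Ex}\colon p^* z_* \to z'_* p_Z^*$ displayed in the statement I would then test via the conservative pair $((z')^*, v^*)$ of condition (d) on $P$: precomposing with $(z')^*$ yields a composite of isomorphisms because $z_*$ and $z'_*$ are fully faithful by condition (c), while precomposing with $v^*$ gives a map between zero functors, since $v^* p^* z_* \cong w^* u^* z_* = 0$ and $v^* z'_* = 0$ both by part (2). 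Conservativity then forces $\mathrm{Ex}$ to be invertible. I expect the main obstacle to be part (3): beyond the Hom-vanishing and the triangle-identity bookkeeping, one must check that the pointwise construction of $C$ and $h$ is compatible with the naturality of $\epsilon$ and $\eta$, so as to obtain a genuine distinguished triangle of functors rather than a merely objectwise statement; a secondary subtlety is the geometric verification in (4) that $v$ is the complement of $z'$, which is where \Cref{hyp_core}(ii) is used most essentially.
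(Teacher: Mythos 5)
Your proposal is correct and follows essentially the same route as the paper for parts (1)--(3): the same Cartesian squares feed condition (b) in (1) and (2), the same double adjunction gives $u^* z_* = 0$, and (3) is built from the same cone-plus-conservativity argument. Two remarks. First, on the functoriality issue in (3) that you flag but do not resolve: the paper closes it exactly with the Hom-vanishing you already computed, namely $\Hom(u_{\#}u^*A, C(B)[-1]) \cong \Hom(u^*A, u^*C(B)[-1]) = 0$, which by the criterion of \cite[Prop.~1.1.9]{BBD82} makes the fill-in morphism $C(\phi)\colon C(A)\to C(B)$ (and likewise $\psi(A)\colon C(A)\to z_*z^*A$) \emph{unique}, hence automatically compatible with composition; so the objectwise construction upgrades to a functor and a natural transformation for free. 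You have all the ingredients for this, so it is an acknowledged incompleteness rather than a gap. Second, your treatment of (4) is a genuine (minor) variant: the paper first strips off the counit (invertible by full faithfulness of $z_*$), reducing to invertibility of the unit $p^*z_*A \to z'_*{z'}^*p^*z_*A$, and then uses the localization triangle of part (3) for the pair $(z', u')$ to convert that into the vanishing $u'_{\#}{u'}^* p^* z_* = 0$; you instead test the full exchange map against the conservative pair $({z'}^*, v^*)$ directly. Both work and both require identifying the pullback of $u$ along $p$ as the open complement of $z'$ via \Cref{hyp_core}(ii); your version avoids invoking part (3) at the cost of a slightly longer isomorphism check after applying ${z'}^*$.
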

\begin{proof}
	\begin{enumerate}
		\item Saying that $u_{\#}$ is fully faithful is equivalent to saying that the unit natural transformation $\eta: \id_{\Hbb(S)} \rightarrow u^* u_{\#}$ is invertible. In order to show that this is the case, it suffices to apply \Cref{defn_Hlocal}(b) to the square
		\begin{equation*}
			\begin{tikzcd}
				U \arrow{r}{\id_U} \arrow{d}{\id_U} & U \arrow{d}{u} \\
				U \arrow{r}{u} & S
			\end{tikzcd}
		\end{equation*}
		which is Cartesian since $U$ is a monomorphism (\Cref{rem:Hloc-u-mono}).
		\item To prove the vanishing of $z^* \circ u_{\#}$, it suffices to apply \Cref{defn_Hlocal}(a)(b) to the Cartesian square
		\begin{equation*}
			\begin{tikzcd}
				\emptyset \arrow{r} \arrow{d} & U \arrow{d}{u} \\
				Z \arrow{r}{z} & S.
			\end{tikzcd}
		\end{equation*}
		The vanishing of $u^* \circ z_*$ follows from this by adjunction.
		\item For every object $A \in \Hbb(S)$, complete the co-unit morphism $\epsilon: u_{\#} u^* A \rightarrow A$ to a distinguished triangle in $\Hbb(S)$
		\begin{equation*}
			u_{\#} u^* A \xrightarrow{\epsilon} A \rightarrow C(A) \xrightarrow{+1}.
		\end{equation*}
		The fully faithfulness of the functor $u_{\#}$, together with the triangular identity, implies that the morphism $u^* u_{\#} u^* A \xrightarrow{\epsilon} u^* A$ is an isomorphism inverse to $u^* A \xrightarrow{\eta} u^* u_{\#} u^* A$: hence the object $u^* C(A)$, being the cone of this morphism, must vanish.
		As a consequence, for every morphism $\phi: A \rightarrow B$ in $\Hbb(S)$, applying the criterion of \cite[Prop.~1.1.9]{BBD82} to the vanishing
		\begin{equation*}
			\Hom_{\Hbb(S)}(u_{\#} u^* A, C(B)[-1]) = \Hom_{\Hbb(U)}(u^* A, u^* C(B)[-1]) = 0,
		\end{equation*}
		one deduces the existence of a unique morphism $C(\phi): C(A) \rightarrow C(B)$ fitting into a morphism of triangles of the form
		\begin{equation*}
			\begin{tikzcd}
				u_{\#} u^* A \arrow{r}{\epsilon} \arrow{d}{u_{\#} u^* \phi} & A \arrow{r} \arrow{d}{\phi} & C(A) \arrow{r}{+1} \arrow{d}{C(\phi)} & {} \\
				u_{\#} u^* B \arrow{r}{\epsilon} & B \arrow{r} & C(B) \arrow{r}{+1} & {}. \\
			\end{tikzcd}
		\end{equation*}
		Hence the assignment $A \rightsquigarrow C(A)$ defines a functor $C: \Hbb(S) \rightarrow \Hbb(S)$ fitting into a canonical distinguished triangle of functors $\Hbb(S) \rightarrow \Hbb(S)$
		\begin{equation*}
			u_{\#} u^* A \xrightarrow{\epsilon} A \rightarrow C(A) \xrightarrow{+1}.
		\end{equation*}
		Note that the composite natural isomorphism of functors $\Hbb(S) \rightarrow \Hbb(S)$
		\begin{equation*}
			u_{\#} u^* A \xrightarrow{\epsilon} A \xrightarrow{\eta} z_* z^* A
		\end{equation*}
		vanishes, since we have
		\begin{equation}\label{Hloc-z^*u_sh=0}
			\Hom_{\Hbb(S)}(u_{\#} u^* A, z_* z^* A) = \Hom_{\Hbb(Z)}(z^* u_{\#} u^* A, z^* A) = 0
		\end{equation}
		as $z^* \circ u_{\#} = 0$. Therefore, for every $A \in \Hbb(S)$, there exists a morphism $\psi(A): C(A) \rightarrow z_* z^* A$ fitting into a morphism of triangles
		\begin{equation*}
			\begin{tikzcd}
				u_{\#} u^* A \arrow{r}{\epsilon} \arrow{d} & A \arrow{r} \arrow{d}{\eta} & C(A) \arrow{r}{+1} \arrow{d}{\psi(A)} & {} \\
				0 \arrow{r} & z_* z^* A \arrow{r} & z_* z^* A \arrow{r}{+1} & {}.
			\end{tikzcd}
		\end{equation*}
		Applying the criterion of \cite[Prop.~1.1.9]{BBD82} to the vanishing \eqref{Hloc-z^*u_sh=0}, one deduces that such a morphism $\psi(A)$ is unique and, similarly, one show that it is functorial in $A$. Thus one obtains a canonical natural transformation of functors $\Hbb(S) \rightarrow \Hbb(S)$
		\begin{equation*}
			\psi: C(A) \rightarrow z_* z^* A.
		\end{equation*}
		To check that it is object-wise invertible, one can use the conservativity of the pair $(z^*,u^*)$: then it suffices to note that, by the fully faithfulness of $z_*$ in \Cref{defn_Hlocal}(c) and the triangular identity, the unit morphism $z^* A \xrightarrow{\eta} z^* z_* z^* A$ is an isomorphism inverse to $z^* z_* z^* A \xrightarrow{\epsilon} z^* A$.
		\item Since the co-unit arrow $z'_* p_Z^* z^* z_* A \xrightarrow{\epsilon} z'_* p_Z^* A$ is invertible by the fully faithfulness of $z_*$, it suffices to show that the unit arrow $p^* z_* A \xrightarrow{\eta} z'_* {z'}^* p^* z_* A$ is invertible as well. To this end, form the Cartesian diagram
		\begin{equation*}
			\begin{tikzcd}
				P_U \arrow{r}{u'} \arrow{d}{p_U} & P \arrow{d}{p} \\
				U \arrow{r}{u} & S
			\end{tikzcd}
		\end{equation*}
		complementary to the one in the statement. As a consequence of the previous point, the invertibility of the above unit arrow is equivalent to the vanishing of the composite functor $u'_{\#} {u'}^* \circ p^* \circ z_*$. To show this, it suffices to use the existence of a connection isomorphism ${u'}^* \circ p^* \simeq p_U^* \circ u^*$ and the vanishing of $u^* \circ z_*$ established in the second point.
	\end{enumerate}
\end{proof}

From now until the end of this section, we fix two localic $\Scal$-fibered categories $\Hbb_1$ and $\Hbb_2$, as well as a family a triangulated functors $\Rcal = \left\{R_S: \Hbb_1(S) \rightarrow \Hbb_2(S) \right\}_{S \in \Scal}$. 

Our goal is to modify the notion of $\Scal$-skeleton on $\Rcal$ by replacing inverse images under closed immersions with the corresponding direct images; to this end, we need to have a closer look at the adjointability properties of $\Scal^{cl}$-structures and $\Scal^{sm}$-structures according to \Cref{defn:mor-Sfib-adj}(2). Here is the key observation:

\begin{lem}\label{lem_Scl_Scl-op}
	Suppose that we are given an $\Scal^{sm}$-structure on $\Rcal$. Then the following statements hold:
	\begin{enumerate}
		\item Every possible $\Scal^{cl}$-structure on $\Rcal$ is right-adjointable.
		\item If the given $\Scal^{sm}$-structure is left-adjointable, then every possible $\Scal^{cl,op}$-structure on $\Rcal$ is left-adjointable.
	\end{enumerate}
\end{lem}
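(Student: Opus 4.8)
The plan is to treat both statements by a single mechanism: reduce the invertibility of the relevant natural transformation of \Cref{defn:mor-Sfib-adj} to a vanishing statement, which is then obtained from the given $\Scal^{sm}$-structure together with the localization triangle of \Cref{lem_Hlocal}(3) and the vanishings $u^* \circ z_* = 0 = z^* \circ u_{\#}$ of \Cref{lem_Hlocal}(2). Throughout, $z: Z \rightarrow S$ denotes a closed immersion and $u: U \rightarrow S$ its complementary open immersion; since $\Hbb_1$ and $\Hbb_2$ are localic, the functors $z_*$ and $u_{\#}$ are fully faithful (\Cref{defn_Hlocal}(c) and \Cref{lem_Hlocal}(1)) and all the $R_S$ are triangulated. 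The common first step, which I would isolate, is to telescope the transformation down to a single unit or counit using that every component of the given $\Scal^{cl}$-structure (resp. $\Scal^{cl,op}$-structure) is a natural \emph{isomorphism}; this is what makes the argument independent of the particular structure chosen.

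For part (1), fix an $\Scal^{cl}$-structure $\theta^{cl}$ and consider the transformation \eqref{theta'-r} attached to $z$. As $\theta^{cl}_z$ is invertible and the counit $z^* z_* \to \id$ is invertible by full faithfulness of $z_*$, the whole composite is invertible if and only if the unit arrow $\eta: R_S(z_* A) \rightarrow z_* z^* R_S(z_* A)$ is so. Applying \Cref{lem_Hlocal}(3) to $R_S(z_* A) \in \Hbb_2(S)$ places this arrow in a distinguished triangle with third term $u_{\#} u^* R_S(z_* A)$; hence $\eta$ is invertible exactly when $u^* R_S(z_* A) = 0$, using full faithfulness of $u_{\#}$. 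The given $\Scal^{sm}$-structure supplies an isomorphism $\theta^{sm}_u: u^* R_S(z_* A) \xrightarrow{\sim} R_U(u^* z_* A)$, and $u^* z_* = 0$ by \Cref{lem_Hlocal}(2), so the target is $R_U(0) = 0$. This yields the vanishing and proves that $\theta^{cl}$ is right-adjointable; note that only the bare $\Scal^{sm}$-structure is used.

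For part (2), regard $\Hbb_1, \Hbb_2$ as $\Scal^{cl,op}$-fibered categories via \Cref{lem_H-adjoint}, so that the inverse image along the reverse of $z$ is $z_*$, with left adjoint $z^*$. Given an $\Scal^{cl,op}$-structure $\vartheta$, the transformation \eqref{theta'-l} now takes the form $z^* R_S(A) \rightarrow R_Z(z^* A)$; exactly as before, the outer arrows built from $\vartheta_z$ and from the invertible counit $z^* z_* \to \id$ are isomorphisms, so invertibility reduces to $z^* R_S(\eta_A)$ being invertible, where $\eta_A: A \rightarrow z_* z^* A$ is the localization unit in $\Hbb_1(S)$. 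Applying the triangulated functors $R_S$ and then $z^*$ to the localization triangle of $A$ (\Cref{lem_Hlocal}(3)), this holds if and only if $z^* R_S(u_{\#} u^* A) = 0$. This is precisely where left-adjointability of the $\Scal^{sm}$-structure enters: by \Cref{lem_R-adjoint} it produces an isomorphism $R_S(u_{\#} u^* A) \cong u_{\#} R_U(u^* A)$, after which $z^* u_{\#} = 0$ (\Cref{lem_Hlocal}(2)) concludes the argument.

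The one genuine subtlety — and the step I would watch most carefully — is the asymmetry between the two parts. In part (1) the complementary contribution is measured after applying $u^*$, a functor available on any $\Scal$-fibered category, so the plain $\Scal^{sm}$-structure suffices to transport it past $R$. In part (2) the same contribution appears as $u_{\#} u^* A$, and killing it forces one to commute $u_{\#}$ with $R_S$, which is exactly left-adjointability of the $\Scal^{sm}$-structure; without it the comparison arrow $\bar{\theta}^{sm}_u$ of \Cref{lem_R-adjoint} would not even be defined. I would therefore present the shared reduction first and then run the two vanishing arguments in parallel, one routed through $u^*$ and the other through $u_{\#}$.
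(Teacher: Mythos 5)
Your proof is correct and follows essentially the same route as the paper: reduce each transformation to the relevant unit arrow, convert its invertibility into a vanishing statement via the localization triangle, and kill the complementary term using $\theta^{sm}_u$ (resp. $\bar{\theta}^{sm}_u$ from \Cref{lem_R-adjoint}) together with $u^* \circ z_* = 0$ (resp. $z^* \circ u_{\#} = 0$). Your closing remark on why part (2) genuinely needs left-adjointability of the $\Scal^{sm}$-structure is exactly the point of the asymmetry in the statement.
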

\begin{proof}
	It suffices to show that, given a closed immersion $z: Z \rightarrow S$ in $\Scal$, the following claims hold:
	\begin{enumerate}
		\item Given a natural isomorphism of functors $\Hbb_1(S) \rightarrow \Hbb_2(Z)$
		\begin{equation*}
			\theta_z: z^* R_S(A) \xrightarrow{\sim} R_Z (z^* A)
		\end{equation*}
		the natural transformation of functors $\Hbb_1(Z) \rightarrow \Hbb_2(S)$
		\begin{equation*}
			\bar{\theta}_z: R_S (z_* B) \xrightarrow{\eta} z_* z^* R_S (z_* B) \xrightarrow{\theta_z} z_* R_Z (z^* z_* B) \xrightarrow{\epsilon} z_* R_Z(B)
		\end{equation*}
		is always invertible.
		\item Given a natural isomorphism of functors $\Hbb_1(Z) \rightarrow \Hbb_2(S)$
		\begin{equation*}
			\bar{\theta}_z: R_S (z_* B) \xrightarrow{\sim} z_* R_Z(B)
		\end{equation*}
		the natural transformation of functors $\Hbb_1(S) \rightarrow \Hbb_2(Z)$
		\begin{equation*}
			\theta_z: z^* R_S(A) \xrightarrow{\eta} z^* R_S (z_* z^* A) \xrightarrow{\bar{\theta}_z} z^* z_* R_Z (z^* A) \xrightarrow{\epsilon} R_Z (z^* A)
		\end{equation*}
		is invertible, provided that the given $\Scal^{sm}$-structure is left-adjointable.
	\end{enumerate}
	Let $u: U \rightarrow S$ denote the open immersion complementary to $z$. We argue as follows:
	\begin{enumerate}
		\item We only have to show that the unit arrow $R_S \circ z_* \xrightarrow{\eta} z_* z^* \circ R_S \circ z_*$ is invertible. By the localization triangle and the fully faithfulness of $u_{\#}$ (\Cref{lem_Hlocal}(1)), this is equivalent to showing the vanishing $u^* \circ  R_S \circ z_* = 0$. This follows from the existence of a natural isomorphism $u^* \circ  R_S \simeq R_S \circ u^*$ (given by the hypothesis) together with the vanishing $u^* \circ z_* = 0$ (\Cref{lem_Hlocal}(2)).
		\item As above, we only have to show that the unit arrow $z^* \circ R_S \xrightarrow{\eta} z^* \circ R_S \circ z_* z^*$ is invertible. By the localization triangle, this is equivalent to showing the vanishing $z^* \circ R_S \circ  u_{\#} u^* = 0$. This follows from the natural isomorphism $u_{\#} \circ R_U \simeq R_S \circ u_{\#}$ (given by the hypothesis) together with the vanishing $z^* \circ u_{\#} = 0$ (\Cref{lem_Hlocal}(2)).
	\end{enumerate}
    This proves the claim and concludes the proof.
\end{proof}

This discussion leads naturally to the following definition:

\begin{defn}
	Let $\Hbb_1$ and $\Hbb_2$ be two localic $\Scal$-fibered categories. 
	\begin{enumerate}
		\item We say that a triangulated morphism of $\Scal$-fibered categories $R: \Hbb_1 \rightarrow \Hbb_2$ is \textit{localic} if it is smooth-adjointable in the sense of \Cref{defn:geom}(2).
		\item If we are only given a collection of triangulated functors $\Rcal = \left\{R_S: \Hbb_1(S) \rightarrow \Hbb_2(S) \right\}_{S \in \Scal}$, we say that an $\Scal$-structure $\theta$ on $\Rcal$ is \textit{localic} if so is the resulting morphism of $\Scal$-fibered categories $(\Rcal,\theta): \Hbb_1 \rightarrow \Hbb_2$.
	\end{enumerate} 
\end{defn}

\subsection{Cores of localic morphisms}

After the above preliminaries, we can define the natural variant of skeleta of morphisms as follows:

\begin{defn}\label{defn:core}
	An \textit{$\Scal$-core} on the family $\Rcal$ is the datum of
	\begin{itemize}
		\item a left-adjointable $\Scal^{sm}$-structure on $\Rcal$,
		\item an $\Scal^{cl,op}$-structure on $\Rcal$
	\end{itemize}
	satisfying the following exchange conditions:
	\begin{enumerate}
		\item[(C'-$\Scal$-fib)] For every Cartesian square in $\Scal$
		\begin{equation*}
			\begin{tikzcd}
				P_Z \arrow{r}{z'} \arrow{d}{p'} & P \arrow{d}{p} \\
				Z \arrow{r}{z} & S
			\end{tikzcd}
		\end{equation*}
		with $p$ a smooth morphism and $z$ a closed immersion, the diagram of functors $\Hbb_1(Z) \rightarrow \Hbb_2(P)$
		\begin{equation*}
			\begin{tikzcd}
				p^* z_* R_Z(A) \isoarrow{d} & p^* R_S (z_* A) \arrow{l}{\bar{\theta}^{cl}} \arrow{r}{\theta^{sm}} & R_{P_Z} (p^* z^* A) \isoarrow{d} \\
				{z'}_* {p'}^* R_S(A) \arrow{r}{\theta^{sm}} & {z'}^* R_{P_Z} ({p'}^* A)  & R_P ({z'}_* {p'}^* A) \arrow{l}{\bar{\theta}^{cl}}
			\end{tikzcd}
		\end{equation*}
		is commutative.
		\item[(T-$\Scal$-fib)] For every commutative triangle in $\Scal$
		\begin{equation*}
			\begin{tikzcd}
				Q \arrow{r}{h} \arrow{dr}{q} & P \arrow{d}{p} \\
				& S
			\end{tikzcd}
		\end{equation*}
		with $p,q$ smooth morphisms and $h$ a closed immersion, the diagram of functors $\Hbb_1(S) \rightarrow \Hbb_2(Q)$
		\begin{equation*}
			\begin{tikzcd}
				q^* R_S(A) \arrow{rr}{\theta^{sm}} \arrow[equal]{d} && R_Q (q^* A) \arrow[equal]{d} \\
				h^* p^* R_S(A) \arrow{r}{\theta^{sm}} & h^* R_P (p^* A) \arrow{r}{\theta^{cl}} & R_S (h^* p^* A)
			\end{tikzcd}
		\end{equation*}
		is commutative.
	\end{enumerate}
\end{defn}

\begin{thm}\label{prop_mor-core}
	Every localic $\Scal$-structure on $\Rcal$ determines by restriction and adjunction an $\Scal$-core on it. Conversely, every $\Scal$-core on $\Rcal$ uniquely extends to a localic $\Scal$-structure on it.
\end{thm}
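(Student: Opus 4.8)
The plan is to deduce this statement from its skeletal counterpart \Cref{prop_mor-skel} by means of the adjunction correspondence recorded in \Cref{lem_Scl_Scl-op}. The starting observation is that, once a left-adjointable $\Scal^{sm}$-structure $\theta^{sm}$ on $\Rcal$ is fixed, passing to adjoint mates sets up a bijection between $\Scal^{cl}$-structures $\theta^{cl}$ and $\Scal^{cl,op}$-structures $\bar\theta^{cl}$ on $\Rcal$. Indeed, \Cref{lem_Scl_Scl-op}(1) shows that every $\Scal^{cl}$-structure $\theta^{cl}$ is right-adjointable, so that \Cref{lem_R-adjoint} produces from it an $\Scal^{cl,op}$-structure $\bar\theta^{cl}$; conversely, since $\theta^{sm}$ is left-adjointable, \Cref{lem_Scl_Scl-op}(2) shows that every $\Scal^{cl,op}$-structure $\bar\theta^{cl}$ is left-adjointable, and \Cref{lem_R-adjoint} produces from it an $\Scal^{cl}$-structure $\theta^{cl}$. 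The triangular identities guarantee that these two passages are mutually inverse, so that $\theta^{cl}$ and $\bar\theta^{cl}$ determine each other.

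With this correspondence at hand, the result becomes a translation of \Cref{prop_mor-skel} along the bijection $\theta^{cl} \leftrightarrow \bar\theta^{cl}$. By \Cref{prop_mor-skel} combined with \Cref{lem_Scl-str}, giving a localic $\Scal$-structure on $\Rcal$ amounts to giving a left-adjointable $\Scal^{sm}$-structure $\theta^{sm}$ together with an $\Scal^{cl}$-structure $\theta^{cl}$ subject to conditions (C-$\Scal$-fib) and (T-$\Scal$-fib); here the localic character of the resulting $\Scal$-structure corresponds exactly to the left-adjointability of $\theta^{sm}$, by the final clause of \Cref{prop_mor-skel}. On the other hand, an $\Scal$-core is by definition the datum of the same $\theta^{sm}$ together with the associated $\bar\theta^{cl}$, subject to (C'-$\Scal$-fib) and (T-$\Scal$-fib); note that the closed transition isomorphisms $\theta^{cl}$ appearing in (T-$\Scal$-fib) are precisely those recovered from $\bar\theta^{cl}$ by the adjunction above. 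Since (T-$\Scal$-fib) is then literally the same requirement in both settings, the whole theorem reduces to establishing, for every Cartesian square of the relevant form, the equivalence of (C-$\Scal$-fib) for $(\theta^{sm},\theta^{cl})$ with (C'-$\Scal$-fib) for $(\theta^{sm},\bar\theta^{cl})$. Granting this equivalence, both halves of the statement follow: restriction and adjunction of a localic $\Scal$-structure yield an $\Scal$-core, and conversely an $\Scal$-core gives rise through \Cref{lem_Scl-str} and \Cref{prop_mor-skel} to a unique extending localic $\Scal$-structure, the uniqueness being inherited from the uniqueness clause of \Cref{prop_mor-skel} together with the bijectivity of the mate correspondence.

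The core of the argument, and the step I expect to be the main obstacle, is thus the equivalence (C-$\Scal$-fib) $\Leftrightarrow$ (C'-$\Scal$-fib). This is a mate computation relative to the closed-immersion adjunctions $z^* \dashv z_*$ and ${z'}^* \dashv {z'}_*$ attached to the Cartesian square. Concretely, I would expand the definition of $\bar\theta^{cl}$ in terms of $\theta^{cl}$, inserting the relevant units and counits, and then convert the diagram of (C'-$\Scal$-fib) into that of (C-$\Scal$-fib). The three essential inputs are: the base-change isomorphism $p^* z_* \cong {z'}_* {p'}^*$ supplied by \Cref{lem_Hlocal}(4), which realises the two vertical identifications of (C'-$\Scal$-fib); the fully faithfulness of $z_*$ and ${z'}_*$ from \Cref{defn_Hlocal}(c), which makes the pertinent unit and counit maps invertible; and the naturality of all transformations involved. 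The resulting (large) diagram then decomposes into subregions each commuting by naturality, by the triangular identities, or by (C-$\Scal$-fib) itself. The delicate point is to verify that the base-change isomorphism of \Cref{lem_Hlocal}(4) is compatible with the mate of the connection isomorphism ${z'}^* p^* \cong {p'}^* z^*$ across the square, so that the vertical legs of the two squares correspond correctly under the adjunctions; once this compatibility is pinned down, the equivalence follows by a single bookkeeping-heavy but entirely formal diagram chase.
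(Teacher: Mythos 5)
Your proposal follows the paper's proof essentially verbatim: the paper likewise reduces via \Cref{prop_mor-skel}, \Cref{lem_Scl-str} and \Cref{lem_Scl_Scl-op} to showing that (C-$\Scal$-fib) and (C'-$\Scal$-fib) are equivalent, and then establishes that equivalence by exactly the mate computation you describe, using the base-change isomorphism of \Cref{lem_Hlocal}(4), the full faithfulness of direct images along closed immersions, naturality and the triangular identities. The only difference is that the paper carries out the two large diagram decompositions explicitly, whereas you leave that bookkeeping as a sketch; all the ingredients you list are the ones actually used.
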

\begin{proof}
	In view of \Cref{prop_mor-skel}, it suffices to show that localic $\Scal$-structures and $\Scal$-cores on $\Rcal$ determine one another by adjunction.
	To this end, in view of \Cref{lem_Scl-str} and \Cref{lem_Scl_Scl-op}, it suffices to show that the exchange conditions (C-$\Scal$-fib) and (C'-$\Scal$-fib) are equivalent.
	
	Assume that condition (C-$\Scal$-fib) holds, and let us prove that (C'-$\Scal$-fib) holds as well: given a Cartesian square
	\begin{equation*}
		\begin{tikzcd}
			P_Z \arrow{r}{z'} \arrow{d}{p'} & P \arrow{d}{p} \\
			Z \arrow{r}{z} & S
		\end{tikzcd}
	\end{equation*}
	with $z$ a closed immersion and $p$ a smooth morphism, we have to show that the diagram of functors $\Hbb_1(Z) \rightarrow \Hbb_2(P)$
	\begin{equation*}
		\begin{tikzcd}
			p^* z_* R_Z(A) \isoarrow{d} & p^* R_S (z_* A) \arrow{l}{\bar{\theta}^{cl}} \arrow{r}{\theta^{sm}} & R_P (p^* z_* A) \isoarrow{d} \\
			{z'}_* {p'}^* R_Z(A) \arrow{r}{\theta^{sm}} & {z'}_* R_{P_Z} ({p'}^* A) & R_P ({z'}_* {p'}^* A) \arrow{l}{\bar{\theta}^{cl}}
		\end{tikzcd}
	\end{equation*}
    is commutative. Unwinding the various definitions, we obtain the more explicit diagram
    \begin{equation*}
    	\begin{tikzcd}[font=\small]
    		p^* z_* R_Z(A) \arrow{d}{\eta} & p^* z_* R_Z (z^* z_* A) \arrow{l}{\epsilon} & p^* z_* z^* R_S (z_* A) \arrow{l}{\theta^{cl}} & p^* R_S (z_* A) \arrow{l}{\eta} \arrow{d}{\theta^{sm}}  \\
    		{z'}_* {z'}^* p^* z_* R_Z(A) \arrow[equal]{d} &&& R_P (p^* z_* A) \arrow{d}{\eta} \\
    		 {z'}_* {p'}^* z^* z_* R_Z(A) \arrow{d}{\epsilon} &&& R_P ({z'}_* {z'}^* p^* z_* A) \arrow[equal]{d} \\
    		{z'}_* {p'}^* R_Z(A) \arrow{d}{\theta^{sm}}  &&& R_P ({z'}_* {p'}^* z^* z_* A) \arrow{d}{\epsilon} \\
    		{z'}_* R_{P_Z} ({p'}^* A) & {z'}_* R_{P_Z} ({z'}^* {z'}_* {p'}^* A) \arrow{l}{\epsilon} & {z'}_* {z'}^* R_{P_Z} ({z'}_* {p'}^* A) \arrow{l}{\theta^{cl}} & R_{P_Z} ({z'}_* {p'}^* A) \arrow{l}{\eta}
    	\end{tikzcd}
    \end{equation*}
    that we decompose as
	\begin{equation*}
		\begin{tikzcd}[font=\small]
			\bullet \arrow{d}{\eta} & \bullet \arrow{l}{\epsilon} \arrow{d}{\eta} & \bullet \arrow{l}{\theta^{cl}} \arrow{d}{\eta} & \bullet \arrow{l}{\eta} \arrow{dr}{\theta^{sm}} \arrow{d}{\eta} \\
			\bullet \arrow[equal]{dd} & {z'}_* {z'}^* p^* z_* R_Z (z^* z_* A) \arrow{l}{\epsilon} \arrow[equal]{d} & {z'}_* {z'}^* p^* z_* z^* R_S (z_* A) \arrow{l}{\theta^{cl}} & {z'}_* {z'}^* p^* R_S (z_* A) \arrow{d}{\theta^{sm}} \arrow{l}{\eta} & \bullet \arrow{d}{\eta} \arrow{dl}{\eta} \\
			& {z'}_* {p'}^* z^* z_* R_Z (z^* z_* A) \arrow{d}{\epsilon} \arrow{dl}{\epsilon} && {z'}_* {z'}^* R_P (p^* z_* A) \arrow{d}{\eta} & \bullet \arrow[equal]{dd} \arrow{dl}{\eta} \\
			\bullet \arrow{d}{\epsilon} & {z'}_* {p'}^* R_Z (z^* z_* A) \arrow{d}{\theta^{sm}} \arrow{dl}{\epsilon} && {z'}_* {z'}^* R_P ({z'}_* {z'}^* p^* z_* A) \arrow[equal]{d} \\
			\bullet \arrow{dr}{\theta^{sm}}  & {z'}_* R_{P_Z} ({p'}^* z^* z_* A) \arrow{d}{\epsilon} & {z'}_* R_{P_Z} ({z'}^* {z'}_* {p'}^* z^* z_* A) \arrow{l}{\epsilon} \arrow{d}{\epsilon} & {z'}_* {z'}^* R_P ({z'}_* {p'}^* z^* z_* A) \arrow{d}{\epsilon} \arrow{l}{\theta^{cl}} & \bullet \arrow{d}{\epsilon} \arrow{l}{\eta} \\
			& \bullet & \bullet \arrow{l}{\epsilon} & \bullet \arrow{l}{\theta^{cl}} & \bullet \arrow{l}{\eta}
		\end{tikzcd}
	\end{equation*}
	Since all boundary pieces in the latter diagram are commutative by naturality, we are reduced to considering the inner rectangle. By construction, it then suffices to show that the diagram of functors $\Hbb_1(S) \rightarrow \Hbb_2(P_Z)$
	\begin{equation*}
		\begin{tikzcd}[font=\small]
			{z'}^* p^* z_* R_Z (z^* A)  \arrow[equal]{d} & {z'}^* p^* z_* z^* R_S(A) \arrow{l}{\theta^{cl}} & {z'}^* p^* R_S(A) \arrow{l}{\eta}  \arrow{d}{\theta^{sm}} \\
			{p'}^* z^* z_* R_Z (z^* A)  \arrow{d}{\epsilon} && {z'}^* R_P (p^* A) \arrow{d}{\eta} \\
			{p'}^* R_Z (z^* A) \arrow{d}{\theta^{sm}} && {z'}^* R_P ({z'}_* {z'}^* p^* A) \\
			R_{P_Z} ({p'}^* z^* A) & R_{P_Z} ({z'}^* {z'}_* {p'}^* z^* A) \arrow{l}{\epsilon} & {z'}^* R_P ({z'}_* {p'}^* z^* A) \arrow{l}{\theta^{cl}} \arrow[equal]{u}
		\end{tikzcd}
	\end{equation*}
    is commutative. To this end, it suffices to decompose it as
	\begin{equation*}
		\begin{tikzcd}[font=\small]
			\bullet  \arrow[equal]{d} & \bullet \arrow[equal]{d} \arrow{l}{\theta^{cl}} \\
			\bullet  \arrow{dd}{\epsilon} & {p'}^* z^* z_* z^* R_S(A) \arrow{l}{\theta^{cl}}  & \bullet \arrow{ul}{\eta} \arrow[equal]{dl} \arrow{d}{\theta^{sm}} \\
			& {p'}^* z^* R_S(A) \arrow{u}{\eta} \arrow{dl}{\theta^{cl}} & \bullet \arrow{dl}{\theta^{cl}} \arrow{dd}{\eta} \\
			\bullet \arrow{d}{\theta^{sm}} & R_{P_Z} ({z'}^* p^* A) \\
			\bullet \arrow[equal]{ur} & R_{P_Z} ({z'}^* {z'}_* {z'}_* p^* A) \arrow{u}{\epsilon} & \bullet \arrow{l}{\theta^{cl}} \\
			& \bullet \arrow{ul}{\epsilon} \arrow[equal]{u} & \bullet \arrow{l}{\theta^{cl}} \arrow[equal]{u}
		\end{tikzcd}
	\end{equation*}
	where the central piece is commutative by condition (C-$\Scal$-fib) while all the other inner pieces are commutative by naturality and by construction.
	
	Conversely, assume that condition (C'-$\Scal$-fib) holds, and let us prove that (C-$\Scal$-fib) holds as well: given a Cartesian square
	\begin{equation*}
		\begin{tikzcd}
			P_Z \arrow{r}{z'} \arrow{d}{p'} & P \arrow{d}{p} \\
			Z \arrow{r}{z} & S
		\end{tikzcd}
	\end{equation*}
	with $z$ a closed immersion and $p$ a smooth morphism, we have to show that the diagram of functors $\Hbb_1(Z) \rightarrow \Hbb_2(P)$
	\begin{equation*}
		\begin{tikzcd}
			{z'}^* p^* R_S(A) \arrow[equal]{d} \arrow{r}{\theta^{sm}} & {z'}^* R_P (p^* A) \arrow{r}{\theta^{cl}} & R_{P_Z} ({z'}^* p^* A) \arrow[equal]{d} \\
			{p'}^* z^* R_S(A) \arrow{r}{\theta^{cl}} & {p'}^* R_Z (z^* A) \arrow{r}{\theta^{sm}} & R_{P_Z} ({p'}^* z^* A)
		\end{tikzcd}
	\end{equation*}
    is commutative. Unwinding the various definitions, we obtain the more explicit diagram
    \begin{equation*}
    	\begin{tikzcd}[font=\small]
    		{z'}^* R_P (p^* A) \arrow{r}{\eta} \arrow{d}{\eta} & {z'}^* R_P ({z'}_* {z'}^* p^* A) \arrow{r}{\bar{\theta}^{cl}} & {z'}^* {z'}_* R_{P_Z} ({z'}^* p^* A) \arrow{r}{\epsilon} & R_{P_Z} ({z'}^* p^* A) \arrow{d}{\eta} \\
    		{z'}^* p^* R_S(A) \arrow{d}{\eta} &&& R_{P_Z} ({z'}^* p^* z_* z^* A) \isoarrow{d} \\
    		{z'}^* p^* z_* z^* R_S(A) \isoarrow{d} &&& R_{P_Z} ({z'}^* {z'}_* {p'}^* z^* A) \arrow{d}{\epsilon} \\
    		{z'}^* {z'}_* {p'}^* z^* R_S(A) \arrow{d}{\epsilon} &&& R_{P_Z} ({p'}^* z^* A)  \\
    		{p'}^* z^* R_S(A) \arrow{r}{\eta} & {p'}^* z^* R_S (z_* z^* A) \arrow{r}{\bar{\theta}^{cl}} & {p'}^* z^* z_* R_Z (z^* A) \arrow{r}{\epsilon} & {p'}^* R_Z (z^* A) \arrow{u}{\theta^{sm}}
    	\end{tikzcd}
    \end{equation*}
    that we can decompose as
    \begin{equation*}
    	\begin{tikzcd}[font=\small]
    		& \bullet \arrow{r}{\eta} \arrow{d}{\eta} & \bullet \arrow{r}{\bar{\theta}^{cl}} \arrow{d}{\eta} & \bullet \arrow{r}{\epsilon} \arrow{d}{\eta} & \bullet \arrow{d}{\eta} \\
    		\bullet \arrow{ur}{\theta^{sm}} \arrow{d}{\eta} & {z'}^* R_P(p^* z_* z^* A) \arrow{r}{\eta} & {z'}^* R_P ({z'}_* {z'}^* p^* z_* z^* A) \arrow{r}{\bar{\theta}^{cl}} & {z'}^* {z'}_* R_{P_Z} ({z'}^* p^* z_* z^* A) \arrow{r}{\epsilon} \isoarrow{d} & \bullet \isoarrow{dd} \\
    		\bullet \isoarrow{dd} \arrow{ur}{\theta^{sm}} \arrow{dr}{\eta} & {z'}^* p^* R_S (z_* z^* A) \arrow{u}{\theta^{sm}} \arrow{d}{\eta} && {z'}^* {z'}_* R_{P_Z} ({z'}^* {z'}_* {p'}^* z^* A) \arrow{d}{\epsilon} \arrow{dr}{\epsilon} \\
    		& {z'}^* p^* z_* z^* R_S (z_* z^* A) \isoarrow{d} && {z'}^* {z'}_* R_{P_Z} ({p'}^* z^* A) & \bullet \arrow{d}{\epsilon} \\
    		\bullet \arrow{d}{\epsilon} \arrow{r}{\eta} & {z'}^* {z'}_* {p'}^* z^* R_S (z_* z^* A) \arrow{d}{\epsilon} \arrow{r}{\bar{\theta}^{cl}} & {z'}^* {z'}_* {p'}^* z^* z_* R_Z (z^* A) \arrow{d}{\epsilon} \arrow{r}{\epsilon} & {z'}^* {z'}_* {p'}^* R_Z (z^* A) \arrow{d}{\epsilon} \arrow{u}{\theta^{sm}} \arrow{ur}{\theta^{sm}} & \bullet  \\
    		\bullet \arrow{r}{\eta} & \bullet \arrow{r}{\bar{\theta}^{cl}} & \bullet \arrow{r}{\epsilon} & \bullet \arrow{ur}{\theta^{sm}}
    	\end{tikzcd}
    \end{equation*}
    Since all pieces in the boundary of the latter diagram are commutative by naturality, we are reduced to considering the inner diagram. By construction, it then suffices to show that the diagram of functors $\Hbb_1(Z) \rightarrow \Hbb_2(P)$
	\begin{equation*}
		\begin{tikzcd}[font=\small]
			p^* R_S (z_* A) \arrow{r}{\theta^{sm}} \arrow{d}{\eta} & R_P (p^* z_* A) \arrow{r}{\eta} & R_P ({z'}_* {z'}^* p^* z_* A) \arrow{r}{\bar{\theta}^{cl}} & {z'}_* R_{P_Z} ({z'}^* p^* z_* A) \isoarrow{d} \\
			p^* z_* z^* R_S (z_* A) \isoarrow{d} &&& {z'}_* R_{P_Z} ({z'}^* {z'}_* {p'}^* A) \arrow{d}{\epsilon} \\
			{z'}_* {p'}^* z^* R_S (z_* A) \arrow{r}{\bar{\theta}^{cl}} & {z'}_* {p'}^* z^* z_* R_Z(A) \arrow{r}{\epsilon}  & {z'}_* {p'}^* R_Z(A) \arrow{r}{\theta^{sm}} & {z'}_* R_{P_Z} ({p'}^* A)
		\end{tikzcd}
	\end{equation*}
    is commutative. To this end, it suffices to decompose it as
    \begin{equation*}
    	\begin{tikzcd}[font=\small]
    		& \bullet \arrow{r}{\theta^{sm}} \arrow[bend right]{dl}{\eta} \arrow{dr}{\bar{\theta}^{cl}} & \bullet \arrow{rr}{\eta} \arrow{dr}{\sim} && \bullet \arrow{r}{\bar{\theta}^{cl}} \isoarrow{d} & \bullet \isoarrow{d} \\
    		\bullet \arrow{r}{\bar{\theta}^{cl}} \isoarrow{d} & p^* z_* z^* z_* R_Z(A) \arrow{r}{\epsilon} \isoarrow{d}  & p^* z_* R_Z(A) \arrow{dr}{\sim} & R_P ({z'}_* {p'}_* A) \arrow{r}{\eta} \arrow{dr}{\bar{\theta}^{cl}} & R_P({z'}_* {z'}^* {z'}_* {p'}^* A) \arrow{r}{\bar{\theta}^{cl}} & \bullet \arrow[bend left]{dl}{\epsilon} \\
    		\bullet \arrow{r}{\bar{\theta}^{cl}} & \bullet \arrow{rr}{\epsilon}  && \bullet \arrow{r}{\theta^{sm}} & \bullet
    	\end{tikzcd}
    \end{equation*}
    Here, the central piece is commutative by condition (C'-$\Scal$-fib) while all the other inner pieces are commutative by naturality and by construction. This concludes the proof.
\end{proof}

\section{External tensor skeleta}\label{sect_ETSkel}

In this section we describe a variant of the notion of $\Scal$-skeleton from \Cref{sect_mor-skel} in the setting of external tensor structures (both on single $\Scal$-fibered categories and on morphisms of such). See the last part of \Cref{sect:adj-fib-cat} for the basic notions about external tensor structures.

To this end, we have to keep \Cref{hyp_skel} (but not \Cref{hyp_core}) and we also have to assume that the base category $\Scal$ admits binary products. 

\subsection{External tensor skeleta on fibered categories}

In the first part of this section, we work with a fixed $\Scal$-fibered category $\Hbb$. 

\begin{defn}\label{defn:ETSkel}
	An \textit{external tensor skeleton} $(\boxtimes,m^{sm},m^{cl})$ on $\Hbb$ is the datum of
	\begin{itemize}
		\item an external tensor structure $(\boxtimes^{sm},m^{sm})$ on $\Hbb^{sm}$,
		\item an external tensor structure $(\boxtimes^{cl},m^{cl})$ on $\Hbb^{cl}$
	\end{itemize}
	satisfying the following two conditions:
	\begin{enumerate}
		\item[(ex-ETS-0)] For every $S_1, S_2 \in \Scal$, the two functors
		\begin{equation*}
			- \boxtimes^{sm} - = - \boxtimes^{sm}_{S_1,S_2} -, \; - \boxtimes^{cl} - = - \boxtimes^{cl}_{S_1,S_2} -: \Hbb(S_1) \times \Hbb(S_2) \rightarrow \Hbb(S_1 \times S_2)
		\end{equation*}
		coincide, so that we can write both of them with the same symbol $-\boxtimes- = - \boxtimes_{S_1,S_2} -$.
		\item[(ex-ETS-1)] For every pair of commutative squares in $\Scal$
		\begin{equation*}
			\begin{tikzcd}
				Q_i \arrow{r}{h_i} \arrow{d}{q_i} & P_i \arrow{d}{p_i} \\
				Z_i \arrow{r}{z_i} & S_i
			\end{tikzcd}
			\qquad (i = 1,2)
		\end{equation*}
		with $p_i,q_i$ smooth morphisms and $z_i,h_i$ closed immersions, the diagram of functors $\Hbb(S_1) \times \Hbb(S_2) \rightarrow \Hbb(Q_1 \times Q_2)$
		\begin{equation*}
			\begin{tikzcd}
				q_1^* z_1^* A_1 \boxtimes q_2^* z_2^* A_2 \arrow{r}{m^{sm}} \arrow[equal]{d} & (q_1 \times q_2)^*(z_1^* A_1 \boxtimes z_2^* A_2) \arrow{r}{m^{cl}} & (q_1 \times q_2)^* (z_1 \times z_2)^*(A_1 \boxtimes A_2) \arrow[equal]{d} \\
				h_1^* p_1^* A_1 \boxtimes h_2^* p_2^* A_2 \arrow{r}{m^{cl}} & (h_1 \times h_2)^*(p_1^* A_1 \boxtimes p_2^* A_2) \arrow{r}{m^{sm}} & (h_1 \times h_2)^* (p_1 \times p_2)^*(A_1 \boxtimes A_2)
			\end{tikzcd}
		\end{equation*}
		is commutative.
	\end{enumerate}
	In case $\Hbb$ is smooth-adjointable (in the sense of \Cref{defn:geom}(1)), we say that a given external tensor skeleton on $\Hbb$ is \textit{smooth-adjointable} if the external tensor structure $(\boxtimes^{sm},m^{sm})$ is left-adjointable in the sense of \Cref{defn:Sfib-adj}(3).
\end{defn}

\begin{rem}\label{rem_skel}
	For $i = 1,2$ let $f_i: T_i \rightarrow S_i$ be a morphism in $\Scal$ which is both a closed immersion and a smooth morphism. Recalling the convention $\id_{S_i}^* = \id_{\Hbb(S_i)}$, and taking into account \cite[Rem.~2.7]{Ter24Fib}, we see that condition (ex-ETS) applied to the commutative squares
	\begin{equation*}
		\begin{tikzcd}
			T_i \arrow{r}{f_i} \arrow{d}{f_i} & S_i \arrow{d}{\id_{S_i}} \\
			S_i \arrow{r}{\id_{S_i}} & S_i
		\end{tikzcd}
		\qquad (i = 1,2)
	\end{equation*}
	asserts that the two natural isomorphisms of functors $\Hbb(S_1) \times \Hbb(S_2) \rightarrow \Hbb(T_1 \times T_2)$
	\begin{equation*}
		m^{sm} = m^{sm}_{f_1,f_2}, \; m^{cl} = m^{cl}_{f_1,f_2}: f_1^* A_1 \boxtimes f_2^* A_2 \xrightarrow{\sim} (f_1 \times f_2)^*(A_1 \boxtimes A_2)
	\end{equation*}
	obtained by regarding $f_1, f_2$ as smooth morphisms or as closed immersions, respectively, must coincide.
\end{rem}

Here is the first main result of this section:

\begin{prop}\label{thm_ETSskel}
	Every external tensor structure on $\Hbb$ determines by restriction an external tensor skeleton on it. Conversely, every external tensor skeleton on $\Hbb$ uniquely extends to an external tensor structure on it.
	
	Moreover, in case $\Hbb$ is smooth-adjointable, then a given external tensor structure on $\Hbb$ is smooth-adjointable if and only if the underlying external tensor skeleton is smooth-adjointable.
\end{prop}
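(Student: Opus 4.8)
The plan is to imitate the proof of \Cref{prop_mor-skel} almost verbatim, with $R$-transition isomorphisms replaced by external monoidality isomorphisms and with the single-morphism factorization category $\Fact_{\Scal}(f)$ replaced, for a pair $(f_1,f_2)$, by the product $\Fact_{\Scal}(f_1) \times \Fact_{\Scal}(f_2)$. For the forward direction I restrict a given external tensor structure $(\boxtimes,m)$ to $\Scal^{sm}$ and to $\Scal^{cl}$: the external tensor product functors are unchanged, so (ex-ETS-0) holds on the nose, and the two restricted systems $m^{sm},m^{cl}$ each satisfy their own instance of ($m$ETS). To check (ex-ETS-1) I argue as in the forward part of \Cref{prop_mor-skel}: for a pair of commutative squares with common diagonal composites $f_i := p_i h_i = z_i q_i$, the displayed hexagon is the outer boundary of a diagram obtained by inserting the object $(f_1 \times f_2)^*(A_1 \boxtimes A_2)$ together with the monoidality isomorphism $m_{f_1,f_2}$; the upper and lower halves then commute by ($m$ETS) for $(\boxtimes,m)$ applied to the composites $z_i q_i$ and $p_i h_i$ respectively, so that no hypothesis on the four morphisms is actually needed.

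For the converse, the first step is to define, for a pair $f_i\colon T_i \to S_i$ and factorizations $(P_i;t_i,p_i) \in \Fact_{\Scal}(f_i)$, the candidate
\[
m_{f_1,f_2}^{(t_1,p_1,t_2,p_2)}\colon f_1^* A_1 \boxtimes f_2^* A_2 = t_1^* p_1^* A_1 \boxtimes t_2^* p_2^* A_2 \xrightarrow{m^{cl}} (t_1 \times t_2)^*(p_1^* A_1 \boxtimes p_2^* A_2) \xrightarrow{m^{sm}} (f_1 \times f_2)^*(A_1 \boxtimes A_2),
\]
first pulling out the closed parts and then the smooth parts. Since $\Fact_{\Scal}(f_1) \times \Fact_{\Scal}(f_2)$ is nonempty and connected (a product of two nonempty connected categories, by \Cref{lem:Fact}), independence of the choice reduces to invariance under a morphism in a single factor, say $q_1\colon (P_1';t_1',p_1') \to (P_1;t_1,p_1)$ with the index-$2$ factorization fixed. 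The resulting comparison diagram commutes by the coherence of composites of external monoidality isomorphisms established in \cite[\S~2]{Ter24Fib}, by naturality, by ($m$ETS) for $m^{sm}$, and by (ex-ETS-1) applied to the product of the square
\[
\begin{tikzcd}
T_1 \arrow{r}{t_1'} \arrow{d}{\id_{T_1}} & P_1' \arrow{d}{q_1} \\
T_1 \arrow{r}{t_1} & P_1
\end{tikzcd}
\]
with the degenerate identity square at index $2$, using \Cref{rem_skel} to see that $m^{sm}$ and $m^{cl}$ agree along identities. Uniqueness is then immediate, since ($m$ETS) forces any extending external tensor structure to coincide with $m^{(t_1,p_1,t_2,p_2)}$ on every pair.

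The second step is to verify that the well-defined $m_{f_1,f_2}$ satisfy ($m$ETS) for composable pairs $f_i\colon T_i \to S_i$, $g_i\colon S_i \to V_i$. Here I choose factorizations $(P_i;t_i,p_i) \in \Fact_{\Scal}(f_i)$ and $(Q_i;s_i,q_i) \in \Fact_{\Scal}(g_i)$, together with further factorizations $(L_i;h_i,l_i) \in \Fact_{\Scal}(s_i p_i)$, exactly as in Step~2 of \Cref{prop_mor-skel}; these produce factorizations $(L_i; h_i t_i, q_i l_i) \in \Fact_{\Scal}(g_i f_i)$. Expanding both sides into one large diagram, the lateral pieces commute by coherence of monoidality composites, and the problem reduces to a central rectangle which, after inserting the intermediate factorizations, splits into instances of ($m$ETS) for $m^{sm}$ and $m^{cl}$ together with (ex-ETS-1). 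This diagram chase is the main obstacle: although entirely formal, it is the heaviest computation in the section, because each monoidality isomorphism now carries two independent morphism-indices, roughly doubling the number of edges compared with the argument for $\Scal$-skeleta.

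Finally, the adjointability clause is immediate. By \Cref{rem_skel} the smooth part of the extended external tensor structure is precisely $(\boxtimes^{sm},m^{sm})$, so left-$\Scal^{sm}$-adjointability of $(\boxtimes,m)$ is by definition the smooth-adjointability of the underlying external tensor skeleton, which settles both implications at once.
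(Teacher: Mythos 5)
Your proposal is correct and follows essentially the same route as the paper, which itself only sketches the argument by saying to adapt the proof of \Cref{prop_mor-skel} with $m_{f_1,f_2}$ defined as $m^{sm}\circ m^{cl}$ along a chosen pair of factorizations and leaves the diagram chases to the reader; your candidate isomorphism, the use of connectedness of $\Fact_{\Scal}(f_1)\times\Fact_{\Scal}(f_2)$, the reduction of independence to a single factor via a degenerate square, and the treatment of ($m$ETS) and of the adjointability clause all match the intended argument. The only micro-quibble is that the smooth part of the extension equals $m^{sm}$ because one may take the factorization $(P;\id,f)$ of a smooth $f$ (so that $m^{cl}_{\id,\id}=\id$), rather than by \Cref{rem_skel} per se, but this does not affect correctness.
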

\begin{proof}
	It suffices to adapt the proof of \Cref{prop_mor-skel} in the obvious way, with condition (ex-ETS-1) playing here the same role as condition (ex-$\Scal$-fib) there. 
	In particular, for every choice of arrows $f_i: T_i \rightarrow S_i$ in $\Scal$, $i = 1,2$, the external monoidality isomorphism of functors $\Hbb(S_1) \times \Hbb(S_2) \rightarrow \Hbb(T_1 \times T_2)$
	\begin{equation*}
		m = m_{f_1,f_2}: f_1^* A_1 \boxtimes f_2^* A_2 \xrightarrow{\sim} (f_1 \times f_2)^* (A_1 \boxtimes A_2)
	\end{equation*} 
    is defined by taking the composite
	\begin{equation*}
		m_{f_1,f_2}^{(t_1,p_1;t_2,p_2)}: f_1^* A_1 \boxtimes f_2^* A_2 = t_1^* p_1^* A_1 \boxtimes t_2^* p_2^* A_2 \xrightarrow{m^{cl}} t_{12}^* (p_1^* A_1 \boxtimes p_2^* A_2) \xrightarrow{m^{sm}} t_{12}^* p_{12}^* (A_1 \boxtimes A_2) = f_{12}^* (A_1 \boxtimes A_2)
	\end{equation*}
	associated to any choice of factorizations $(P_i;t_i,p_i) \in \Fact_{\Scal}(f_i)$, $i = 1,2$. As in the proof of \Cref{prop_mor-skel}, one can check that the resulting definition of $m_{f_1,f_2}$ does not depend on the chosen factorizations and that the external monoidality isomorphisms satisfy condition ($m$ETS), thus defining an external tensor structure on $\Hbb$. We leave the details to the interested reader.
\end{proof}

Before proceeding further with the theory of external tensor skeleta, let us quickly note the following analogue of \Cref{lem_Scl-str}, which will be useful in the next section:
\begin{lem}\label{lem:ETSskel-obs}
	Let $(\boxtimes,m^{sm})$ and $(\boxtimes,m^{cl})$ be external tensor structure on the underlying $\Scal^{sm}$-fibered and $\Scal^{cl}$-fibered category of $\Hbb$ satisfying condition (ex-ETS-0). Then the triple $(\boxtimes,m^{sm},m^{cl})$ defines an external tensor skeleton on $\Hbb$ if and only if it satisfies the following two conditions:
	\begin{enumerate}
		\item[(C-ETS-1)] For every pair of Cartesian squares in $\Scal$
		\begin{equation*}
			\begin{tikzcd}
				P_{i,Z_i} \arrow{r}{z'_i} \arrow{d}{p'_i} & P_i \arrow{d}{p_i} \\
				Z_i \arrow{r}{z_i} & S_i
			\end{tikzcd}
		    \qquad (i = 1,2)
		\end{equation*}
		with $p_1$, $p_2$ smooth morphisms and $z_1$, $z_2$ closed immersions, the diagram of functors $\Hbb(S_1) \times \Hbb(S_2) \rightarrow \Hbb(P_{1, Z_1} \times P_{2, Z_2})$
		\begin{equation*}
			\begin{tikzcd}
				{p'_1}^* z_1^* A_1 \boxtimes {p'_2}^* z_2^* A_2 \arrow{r}{m^{sm}} \arrow[equal]{d} & (p'_1 \times p'_2)^* (z_1^* A_1 \boxtimes z_2^* A_2) \arrow{r}{m^{cl}} & (p'_1 \times p'_2)^* (z_1 \times z_2)^* (A_1 \boxtimes A_2) \arrow[equal]{d} \\
				{z'_1}^* p_1^* A_1 \boxtimes {z'_2}^* p_2^* A_2 \arrow{r}{m^{cl}} & (z'_1 \times z'_2)^* (p_1^* A_1 \boxtimes p_2^* A_2) \arrow{r}{m^{sm}} & (z'_1 \times z'_2)^* (p_1 \times p_2)^* (A_1 \boxtimes A_2)
			\end{tikzcd}
		\end{equation*}
		is commutative.
		\item[(T-ETS-1)] For every pair of commutative triangles in $\Scal$
		\begin{equation*}
			\begin{tikzcd}
				Q_i \arrow{r}{h_i} \arrow{dr}{q_i} & P_i \arrow{d}{p_i} \\
				& S_i
			\end{tikzcd}
		    \qquad (i = 1,2)
		\end{equation*}
		with $p_1$, $p_2$, $q_1$, $q_2$ smooth morphisms and $h_1$, $h_2$ closed immersions, the diagram of functors $\Hbb(S_1 \times S_2) \rightarrow \Hbb(Q_1 \times Q_2)$
		\begin{equation*}
			\begin{tikzcd}
				q_1^* A_1 \boxtimes q_2^* A_2 \arrow{rr}{m^{sm}} \arrow[equal]{d} && (q_1 \times q_2)^* (A_1 \times A_2) \arrow[equal]{d} \\
				h_1^* p_1^* A_1 \boxtimes h_2^* p_2^* A_2 \arrow{r}{m^{cl}} & (h_1 \times h_2)^* (p_1^* A_1 \boxtimes p_2^* A_2) \arrow{r}{m^{sm}} & (h_1 \times h_2)^* (p_1 \times p_2)^* (A_1 \times A_2)
			\end{tikzcd}
		\end{equation*}
		is commutative.
	\end{enumerate}
\end{lem}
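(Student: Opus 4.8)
The plan is to establish the claimed equivalence by showing directly that condition (ex-ETS-1) holds if and only if both (C-ETS-1) and (T-ETS-1) hold, following the template provided by the proof of \Cref{lem_Scl-str}. One implication is immediate: condition (C-ETS-1) is nothing but the restriction of (ex-ETS-1) to pairs of Cartesian squares, whereas (T-ETS-1) is the restriction of (ex-ETS-1) to pairs of squares whose bottom rows are identity morphisms (that is, the triangles of (T-ETS-1) regarded as squares with $z_i = \id_{S_i}$ and $Z_i = S_i$). Hence (ex-ETS-1) trivially implies both (C-ETS-1) and (T-ETS-1), and it remains to prove the converse.

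For the converse, suppose that (C-ETS-1) and (T-ETS-1) hold, and consider a general pair of commutative squares as in (ex-ETS-1). For each $i = 1,2$ I would form the Cartesian square
\begin{equation*}
	\begin{tikzcd}
		P_{i,Z_i} \arrow{r}{z'_i} \arrow{d}{p'_i} & P_i \arrow{d}{p_i} \\
		Z_i \arrow{r}{z_i} & S_i
	\end{tikzcd}
\end{equation*}
(which exists by \Cref{hyp_skel}(ii)) and let $\iota_i \colon Q_i \rightarrow P_{i,Z_i}$ denote the morphism induced by $q_i$ and $h_i$ (which is a closed immersion by \Cref{hyp_skel}(iii)), so that $q_i = p'_i \iota_i$ and $h_i = z'_i \iota_i$. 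Substituting these factorizations into the diagram of (ex-ETS-1), and inserting the intermediate terms obtained by applying $(\iota_1 \times \iota_2)^*$ to the relevant external monoidality isomorphisms over $P_{1,Z_1} \times P_{2,Z_2}$, I would decompose the resulting diagram exactly as in the proof of \Cref{lem_Scl-str}. The boundary pieces are commutative by the coherence of external monoidality isomorphisms (see \cite[\S~2]{Ter24Fib}) and by naturality; the triangle pieces are commutative by (T-ETS-1); the central piece, being $(\iota_1 \times \iota_2)^*$ applied to the (C-ETS-1) diagram, is commutative by (C-ETS-1); and the piece recording the compatibility of $m^{cl}$ with the composite closed immersions $h_i = z'_i \iota_i$ is commutative by axiom ($m$ETS) for the $\Scal^{cl}$-structure $(\boxtimes,m^{cl})$ together with the coherence of connection isomorphisms.

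The main obstacle is purely combinatorial rather than conceptual: because external tensor structures involve two independent indices $i = 1,2$, the diagram to be checked is considerably larger than its counterpart in \Cref{lem_Scl-str}, and one must track carefully how the external monoidality isomorphisms $m^{sm}$ and $m^{cl}$ interact with the products $\iota_1 \times \iota_2$, $z'_1 \times z'_2$ and $p'_1 \times p'_2$ of the auxiliary morphisms. Here condition (ex-ETS-0) plays an essential role, since it guarantees that the two external tensor products agree, so that the intermediate terms are well defined and the various monoidality isomorphisms can be composed coherently. Granting this, the decomposition carries over verbatim from the morphism case, and no genuinely new difficulty arises beyond the bookkeeping; I would therefore omit the full diagram, in the same spirit as the proof of \Cref{thm_ETSskel}, which already indicates how to transcribe arguments from the morphism setting to the external tensor setting.
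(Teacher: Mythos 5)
Your proposal is correct and takes essentially the same approach as the paper: the paper's own proof consists of the single remark that the statement ``can be proved along the lines of \Cref{lem_Scl-str}'', and your argument is precisely that adaptation, with the easy direction by restriction and the converse via the induced closed immersions $\iota_i\colon Q_i \to P_{i,Z_i}$ and the same decomposition of the diagram. You in fact supply more detail than the paper does, correctly identifying that the piece handling the composite closed immersions $h_i = z'_i\iota_i$ is covered by axiom ($m$ETS) for $(\boxtimes,m^{cl})$.
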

\begin{proof}
	This can be proves along the lines of \Cref{lem_Scl-str}. We leave the details to the interested reader.
\end{proof}

\subsection{Compatibility with associativity and commutativity constraints}

In practice, one often wants to consider monoidal structures carrying (at least some among) compatible associativity, commutativity, and unit constraints. As shown in \cite[\S\S~3--6]{Ter24Fib}, all these notions can be formulated in the setting of external tensor structures. We now want to show that the various axioms that these constraints introduce in the picture can even be checked on the level of external tensor skeleta. 
This is based on the following notions:

\begin{defn}
	Let $(\boxtimes,m^{sm},m^{cl})$ be an external tensor skeleton on $\Hbb$.
	\begin{enumerate}
		\item An \textit{external associativity constraint} $a$ on $(\boxtimes,m)$ is the datum of
		\begin{itemize}
			\item an associativity constraint $a^{sm}$ on $(\boxtimes^{sm},m^{sm})$,
			\item an associativity constraint $a^{cl}$ on $(\boxtimes^{cl},m^{cl})$
		\end{itemize}
		such that, for every choice of $S_1, S_2, S_3 \in \Scal$, the two natural isomorphisms of functors $\Hbb(S_1) \times \Hbb(S_2) \times \Hbb(S_3) \rightarrow \Hbb(S_1 \times S_2 \times S_3)$
		\begin{equation*}
			a^{sm}, \; a^{cl}: (A_1 \boxtimes A_2) \boxtimes A_3 \xrightarrow{\sim} A_1 \boxtimes (A_2 \boxtimes A_3)
		\end{equation*}
		coincide, so that we can write both of them with the same symbol $a$.
		\item An \textit{external commutativity constraint} $c$ on $(\boxtimes,m^{sm},m^{cl})$ is the datum of
		\begin{itemize}
			\item an external commutativity constraint $c^{sm}$ on $(\boxtimes^{sm},m^{sm})$,
			\item an external commutativity constraint $c^{cl}$ on $(\boxtimes^{cl},m^{cl})$
		\end{itemize}
		such that, for every choice of $S_1, S_2 \in \Scal$, the two natural isomorphisms of functors $\Hbb(S_1) \times \Hbb(S_2) \rightarrow \Hbb(S_1 \times S_2)$
		\begin{equation*}
			c^{sm}, \; c^{cl}: A_1 \boxtimes A_2 \xrightarrow{\sim} \tau^*(A_2 \boxtimes A_1)
		\end{equation*}
		coincide, so that we can write both of them with the same symbol $c$.
	\end{enumerate}
\end{defn}

We note the following result:

\begin{lem}\label{lem_constr-skel}
	Let $(\boxtimes,m^{sm},m^{cl})$ be an external tensor skeleton on $\Hbb$. Then:
	\begin{enumerate}
		\item Every associativity constraint $a$ on $(\boxtimes,m^{sm},m^{cl})$ defines an associativity constraint on the corresponding external tensor structure $(\boxtimes,m)$.
		\item Every commutativity constraint $c$ on $(\boxtimes,m^{sm},m^{cl})$ defines a commutativity constraint on the corresponding external tensor structure $(\boxtimes,m)$.
	\end{enumerate}
\end{lem}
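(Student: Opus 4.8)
The plan is to verify that $a$ (resp.\ $c$) satisfies the two axioms defining an external associativity (resp.\ commutativity) constraint for the extended external tensor structure $(\boxtimes,m)$ produced by \Cref{thm_ETSskel}. Recall that, for arrows $f_i \colon T_i \to S_i$ in $\Scal$, the external monoidality isomorphism $m_{f_1,f_2}$ is by construction the composite of $m^{cl}$ and $m^{sm}$ associated to any factorizations $(P_i;t_i,p_i) \in \Fact_{\Scal}(f_i)$ with $t_i$ a closed immersion and $p_i$ a smooth morphism. I treat the two parts in parallel, since the arguments are formally identical.

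First I would dispose of the ``combinatorial'' axioms ($a$ETS-1) and ($c$ETS-1). The pentagon ($a$ETS-1) involves only $a$ and the functor $\boxtimes$ and no inverse image functors at all, whereas ($c$ETS-1) involves inverse images only along permutation isomorphisms, which lie in $\Scal^{sm}$ by \Cref{hyp_skel}(i). Hence both diagrams coincide with the corresponding diagrams for the constraint $a^{sm} = a$ (resp.\ $c^{sm} = c$) on $(\boxtimes^{sm},m^{sm})$, and therefore commute by hypothesis.

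The substance of the proof lies in the compatibility axioms ($a$ETS-2) and ($c$ETS-2), which relate the constraint to the monoidality isomorphisms $m$. The key observation is that these conditions are \emph{compositional}: if $a$ satisfies ($a$ETS-2) with respect to two componentwise composable triples $(g_1,g_2,g_3)$ and $(h_1,h_2,h_3)$ of morphisms in $\Scal$, then it satisfies it with respect to the componentwise composite $(g_1 h_1, g_2 h_2, g_3 h_3)$; similarly for ($c$ETS-2) and pairs. I would prove this by using axiom ($m$ETS) to split each occurrence of the monoidality isomorphism for $g_i h_i$ as the one for $h_i$ followed by the one for $g_i$, thereby presenting the ($a$ETS-2) (resp.\ ($c$ETS-2)) diagram for the composite as a vertical stacking of the corresponding diagrams for $(h_1,h_2,h_3)$ and $(g_1,g_2,g_3)$; the two subdiagrams commute by assumption, and the intermediate regions commute by naturality of $a$ (resp.\ $c$). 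This is the same stacking mechanism already used in Step~2 of the proof of \Cref{prop_mor-skel} and in \Cref{lem:adj-asso-comm}; note that it is precisely here that the agreement $a^{sm} = a^{cl} = a$ (resp.\ $c^{sm} = c^{cl} = c$) built into the definition is essential, since the smooth and closed pieces must be glued along a single natural isomorphism.

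Granting the compositionality principle, the conclusion follows at once: for arbitrary morphisms $f_i$, choose factorizations $f_i = p_i t_i$ with $t_i$ closed and $p_i$ smooth. Axiom ($a$ETS-2) (resp.\ ($c$ETS-2)) holds for the smooth morphisms $p_i$ because $a = a^{sm}$ is an associativity constraint on $(\boxtimes^{sm},m^{sm})$, and for the closed immersions $t_i$ because $a = a^{cl}$ is an associativity constraint on $(\boxtimes^{cl},m^{cl})$; compositionality then yields the axiom for the $f_i$, and independence of the chosen factorizations is automatic since $m$ itself is well defined by \Cref{thm_ETSskel}. The main obstacle, such as it is, is purely bookkeeping in establishing the compositionality principle---in particular, for ($c$ETS-2) one must keep careful track of the two permutation isomorphisms $\tau$ on $S_1 \times S_2$ and on $T_1 \times T_2$ together with the identification $(f_1 \times f_2)^* \tau^* = \tau^* (f_2 \times f_1)^*$---but no genuinely new idea beyond the diagram chases already appearing in this section is required.
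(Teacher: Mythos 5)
Your proposal is correct and follows essentially the same route as the paper: the paper likewise dismisses ($a$ETS-1)/($c$ETS-1) as not involving (non-permutation) inverse images, and verifies ($a$ETS-2)/($c$ETS-2) by choosing factorizations $(P_i;t_i,p_i)\in\Fact_{\Scal}(f_i)$ and decomposing the diagram into the corresponding diagrams over $\Scal^{cl}$ and $\Scal^{sm}$ glued by naturality squares — which is exactly your ``compositionality'' stacking, just not stated as a separate principle. Your remark that independence of the factorization is automatic because $m$ is already well defined by \Cref{thm_ETSskel} is also the correct justification.
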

\begin{proof}
	In the proof of each statements, we consider a finite number of morphisms $f_i: T_i \rightarrow S_i$ in $\Scal$ and, for each of them, we choose a factorization $(P_i;t_i,p_i) \in \Fact_{\Scal}(f_i)$.
	\begin{enumerate}
		\item Since condition ($a$ETS-1) is trivially satisfied, we only need to check that condition ($a$ETS-2) is satisfied as well: given three morphisms $f_i: T_i \rightarrow S_i$ in $\Scal$, $i = 1,2,3$, we have to show that the diagram of functors $\Hbb(S_1) \times \Hbb(S_2) \times \Hbb(S_3) \rightarrow \Hbb(T_1 \times T_2 \times T_3)$
		\begin{equation*}
			\begin{tikzcd}
				(f_1^* A_1 \boxtimes f_2^* A_2) \boxtimes f_3^* A_3 \arrow{r}{m} \arrow{d}{a} & (f_1 \times f_2)^*(A_1 \boxtimes A_2) \boxtimes f_3^* A_3 \arrow{r}{m} & (f_1 \times f_2 \times f_3)^*((A_1 \boxtimes A_2) \boxtimes A_3)   \arrow{d}{a} \\
				f_1^* A_1 \boxtimes (f_2^* A_2 \boxtimes f_3^* A_3) \arrow{r}{m} & f_1^* A_1 \boxtimes (f_2 \times f_3)^*(A_2 \boxtimes A_3) \arrow{r}{m} & (f_1 \times f_2 \times f_3)^*(A_1 \boxtimes (A_2 \boxtimes A_3))
			\end{tikzcd}
		\end{equation*}
		is commutative. To this end, for $i = 1,2,3$ choose a factorization $(P_i;t_i,p_i) \in \Fact_{\Scal}(f_i)$. Then we can decompose the diagram in question as
		\begin{equation*}
			\begin{tikzcd}[font=\small]
				\bullet \arrow[equal,bend right]{ddr} \arrow{ddddd}{a} \arrow{rr}{m} && \bullet \arrow[equal]{d} \arrow{rr}{m} && \bullet \arrow[equal,bend left]{ddl} \arrow{ddddd}{a} \\
				& t_{12}^* (p_1^* A_1 \boxtimes p_2^* A_2) \boxtimes t_3^* p_3^* A_3 \arrow{r}{m^{sm}} \arrow{dr}{m^{cl}} &  (t_{12}^* p_{12}^* (A_1 \boxtimes A_2) \boxtimes t_3^* p_3^* A_3) \arrow{r}{m^{cl}} & t_{123}^* (p_{12}^* (A_1 \boxtimes A_2) \boxtimes p_3^* A_3) \arrow{d}{m^{sm}} \\
				& (t_1^* p_1^* A_1 \boxtimes t_2^* p_2^* A_2) \boxtimes t_3^* p_3^* A_3 \arrow{u}{m^{cl}} \arrow{d}{a} & t_{123}^*((p_1^* A_1 \boxtimes p_2^* A_2) \boxtimes p_3^* A_3) \arrow{ur}{m^{sm}} \arrow{d}{a} & t_{123}^* p_{123}^* ((A_1 \boxtimes A_2) \boxtimes A_3) \arrow{d}{a} \\
				& t_1^* p_1^* A_1 \boxtimes (t_2^* p_2^* A_2 \boxtimes t_3^* p_3^* A_3) \arrow{d}{m^{cl}} & t_{123}^* (p_1^* A_1 \boxtimes (p_2^* A_2 \boxtimes p_3^* A_3)) \arrow{dr}{m^{sm}} & t_{123}^* p_{123}^* (A_1 \boxtimes (A_2 \boxtimes A_3)) \\
				& t_1^* p_1^* A_1 \boxtimes t_{23}^* (p_2^* A_2 \boxtimes p_3^* A_3) \arrow{r}{m^{sm}} \arrow{ur}{m^{cl}} & t_1^* p_1^* A_1 \boxtimes t_{23}^* p_{23}^* (A_2 \boxtimes A_3) \arrow{r}{m^{cl}} & t_{123}^* (p_1^* A_1 \boxtimes p_{23}^* (A_2 \boxtimes A_3)) \arrow{u}{m^{sm}} \\
				\bullet \arrow[equal,bend left]{uur} \arrow{rr}{m} && \bullet \arrow[equal]{u} \arrow{rr}{m} && \bullet \arrow[equal,bend right]{uul}
			\end{tikzcd}
		\end{equation*}
	    Here, the left-most and right-most central pieces are commutative by axioms ($a$ETS-2) over $\Scal^{cl}$ and $\Scal^{sm}$, respectively, while the remaining pieces are commutative by naturality and by construction.
		\item Since condition ($c$ETS-1) is trivially satisfied, we only need to check that condition ($c$ETS-2) is satisfied as well: given morphisms $f_i: T_i \rightarrow S_i$ in $\Scal$, $i = 1,2$, we have to show that the diagram
		\begin{equation*}
			\begin{tikzcd}
				f_1^* A_1 \boxtimes f_2^* A_2 \arrow{rr}{c} \arrow{d}{m} && \tau^* (f_2^* A_2 \boxtimes f_1^* A_1) \arrow{d}{m} \\
				(f_1 \times f_2)^* (A_1 \boxtimes A_2)\arrow{r}{c} & (f_1 \times f_2)^* \tau^*(A_2 \boxtimes A_1) \arrow[equal]{r} & \tau^* (f_2 \times f_1)^* (A_2 \boxtimes A_1)
			\end{tikzcd}
		\end{equation*}
		is commutative. To this end, for $i = 1,2$ choose a factorization $(P_i;t_i,p_i) \in \Fact_{\Scal}(f_i)$. Then we can decompose the above diagram as
		\begin{equation*}
			\begin{tikzcd}[font=\small]
				\bullet \arrow{rrrrr}{c} \arrow{dddd}{m} \arrow[equal]{dr} &&&&& \bullet \arrow{dddd}{m} \arrow[equal]{dl} \\
				& t_1^* p_1^* A_1 \boxtimes t_2^* p_2^* A_2 \arrow{rrr}{c} \arrow{d}{m^{cl}} &&& \tau^* (t_2^* p_2^* A_2 \boxtimes t_1^* p_1^* A_1) \arrow{d}{m^{cl}} \\
				& t_{12}^* (p_1^* A_1 \boxtimes p_2^* A_2) \arrow{rr}{c} \arrow{d}{m^{sm}} && t_{12}^* \tau^* (p_2^* A_2 \boxtimes p_2^* A_2) \arrow[equal]{r} \arrow{d}{m^{sm}} & \tau^* t_{21}^* (p_2^* A_2 \boxtimes p_1^* A_1) \arrow{d}{m^{sm}} \\
				& t_{12}^* p_{12}^* (A_1 \boxtimes A_2) \arrow{r}{c} & t_{12}^* p_{12}^* \tau^* (A_2 \boxtimes A_1) \arrow[equal]{r} & t_{12}^* \tau^* p_{21}^* (A_2 \boxtimes A_1) \arrow[equal]{r} & \tau^* t_{21}^* p_{21}^* (A_2 \boxtimes A_1) \\
				\bullet \arrow{rrr}{c} \arrow[equal]{ur} &&& \bullet \arrow[equal]{u} \arrow[equal]{rr} && \bullet \arrow[equal]{ul}
			\end{tikzcd}
		\end{equation*}
	    Here, the upper and lower-left central pieces are commutative by axiom ($c$ETS-2) over $\Scal^{cl}$ and $\Scal^{sm}$, respectively, while the remaining pieces are commutative by naturality and by construction.
	\end{enumerate}
\end{proof}

\begin{rem}
	The natural compatibility conditions between external associativity, commutativity and unit constraints listed in \cite[\S 6]{Ter24Fib} do not involve inverse image functors except those along permutation isomorphisms. Since the subcategories $\Scal^{sm}$ and $\Scal^{cl}$ contain all isomorphisms of $\Scal$ by \Cref{hyp_skel}(i), it is clear that the compatibility conditions make sense also in the setting of external tensor skeleta.
\end{rem}

\subsection{External tensor skeleta on morphisms}

In the final part of this section, we extend the notion of external tensor skeleta to external tensor structure on morphisms. 

For $j = 1,2$ let $\Hbb_i$ be an $\Scal$-fibered category and let $(\boxtimes_j,m_j)$ be an external tensor structure on it; moreover, let $R = (\Rcal,\theta): \Hbb_1 \rightarrow \Hbb_2$ be a fixed morphism of $\Scal$-fibered categories.
In virtue of \Cref{prop_mor-skel}, the $\Scal$-structure $\theta$ on the family $\Rcal = \left\{R_S: \Hbb_1(S) \rightarrow \Hbb_2(S) \right\}_{S \in \Scal}$ is completely determined by the underlying $\Scal$-skeleton $(\theta^{sm},\theta^{cl})$.

We are going to describe an analogous results for external tensor structures on $R$ (with respect to $(\boxtimes_1,m_1)$ and $(\boxtimes_2,m_2)$).
This is based on the following definition:

\begin{defn}\label{defn:ETSskel-mor}
	An \textit{external tensor skeleton} $(\rho^{sm},\rho^{cl})$ on $R$ (with respect to $(\boxtimes_1,m_1^{sm},m_1^{cl})$ and $(\boxtimes_2,m_2^{sm},m_2^{cl})$) is the datum of
	\begin{itemize}
		\item an external tensor structure $\rho^{sm}$ on $R^{sm}: \Hbb_1^{sm} \rightarrow \Hbb_2^{sm}$ (with respect to $(\boxtimes_1,m_1^{sm})$ and $(\boxtimes_2,m_2^{sm})$),
		\item an external tensor structure $\rho^{cl}$ on $R^{cl}: \Hbb_1^{cl} \rightarrow \Hbb_2^{cl}$ (with respect to $(\boxtimes_1,m_1^{cl})$ and $(\boxtimes_2,m_2^{cl})$)
	\end{itemize}
	such that, for every $S_1, S_2 \in \Scal$, the two natural isomorphisms of functors $\Hbb_1(S_1) \times \Hbb_1(S_2) \rightarrow \Hbb_2(S_1 \times S_2)$
	\begin{equation*}
		\rho^{sm} = \rho^{sm}_{S_1,S_2}, \; \rho^{cl} = \rho^{cl}_{S_1,S_2}: R_{S_1}(A_1) \boxtimes_2 R_{S_2}(A_2) \xrightarrow{\sim} R_{S_2 \times S_2}(A_1 \boxtimes_1 A_2)
	\end{equation*}
	coincide, so that we can write both of them with the same symbol $\rho = \rho_{S_1,S_2}$.
\end{defn}

\begin{prop}\label{prop_mor_ETSkel}
	An external tensor skeleton on $R$ (with respect to $(\boxtimes_1,m_1^{sm},m_1^{cl})$ and $(\boxtimes_2,m_2^{sm},m_2^{cl})$) is the same as an external tensor structure on $R$ (with respect to $(\boxtimes_1,m_1)$ and $(\boxtimes_2,m_2)$).
\end{prop}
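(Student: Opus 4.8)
The plan is to establish the two directions separately, reusing the machinery already set up for \Cref{prop_mor-skel} and \Cref{thm_ETSskel}. The forward direction is immediate: if $\rho$ is an external tensor structure on $R$ with respect to $(\boxtimes_1,m_1)$ and $(\boxtimes_2,m_2)$, then reading the single family $\{\rho_{S_1,S_2}\}$ against the smooth data $m_1^{sm},m_2^{sm}$ together with the smooth transition isomorphisms $\theta^{sm}$ yields condition (mor-ETS) over $\Scal^{sm}$ as a special case of (mor-ETS) over $\Scal$; hence it defines an external tensor structure $\rho^{sm}$ on $R^{sm}$. The same argument over $\Scal^{cl}$ produces $\rho^{cl}$. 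Since $\rho^{sm}$ and $\rho^{cl}$ are literally the same family $\rho$, the coincidence requirement of \Cref{defn:ETSskel-mor} holds automatically, and we obtain an external tensor skeleton.

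For the converse, I would note first that an external tensor skeleton on $R$ is nothing but a single family $\{\rho_{S_1,S_2}\}$, indexed by pairs of \emph{objects}, which satisfies (mor-ETS) whenever both morphisms are smooth and, separately, whenever both are closed immersions. Because $\rho$ carries no dependence on morphisms, there is no extension to construct, unlike in \Cref{prop_mor-skel}: the entire task is to upgrade the validity of axiom (mor-ETS) from smooth and closed pairs to an arbitrary pair $f_i\colon T_i\to S_i$ ($i=1,2$). I would fix factorizations $(P_i;t_i,p_i)\in\Fact_{\Scal}(f_i)$ with $t_i$ closed and $p_i$ smooth, observing that $(P_1\times P_2;\,t_1\times t_2,\,p_1\times p_2)$ is then a factorization of $f_1\times f_2$. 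By \Cref{thm_ETSskel} the monoidality isomorphisms $m_1,m_2$ along $(f_1,f_2)$ are computed as $m^{sm}$ after $m^{cl}$ along these factorizations, and by \Cref{prop_mor-skel} the transition isomorphisms $\theta_{f_i}$ and $\theta_{f_1\times f_2}$ are computed as $\theta^{cl}$ after $\theta^{sm}$.

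The core of the argument is then a pasting diagram of the same flavour as the one in the proof of \Cref{lem:rho-adj}, but now with \emph{two} commutative cores rather than one. Substituting the factored expressions for $m_1,m_2,\theta$ into the (mor-ETS) diagram for $(f_1,f_2)$ and inserting the intermediate objects living over $P_1\times P_2$, I would exhibit this diagram as the outer boundary of a larger one assembled from (i) the instance of (mor-ETS) for the pair of closed immersions $(t_1,t_2)$, applied to the objects $p_1^*A_1,p_2^*A_2$, which commutes by hypothesis for closed pairs; (ii) the $(t_1\times t_2)^*$-pullback of the instance of (mor-ETS) for the pair of smooth morphisms $(p_1,p_2)$, which commutes by hypothesis for smooth pairs; and (iii) connecting cells that commute by naturality of $\theta^{sm},\theta^{cl},\rho,m_j$ and by the external monoidality coherence ($m$ETS). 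Since (mor-ETS) is a property of a fixed diagram that does not mention the factorization, its validity obtained through one choice of factorizations is enough.

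I expect the main obstacle to be purely organizational: the two (mor-ETS) cores do not abut along a literal identity but through the external monoidality isomorphisms $m_j^{cl}$ relating $(t_1\times t_2)^*(X_1\boxtimes_2 X_2)$ with $t_1^*X_1\boxtimes_2 t_2^*X_2$, so the glue between them is a layer of naturality and ($m$ETS) cells rather than a shared edge. It is worth emphasizing a conceptual point that also explains why \Cref{defn:ETSskel-mor} carries no exchange condition, in contrast to \Cref{defn:skel} and \Cref{defn:ETSkel}: because the external $R$-monoidality data depends only on objects, every interaction between the smooth and closed directions is already absorbed into the ambient full structures $m_j$ and $\theta$ --- whose very construction via \Cref{thm_ETSskel} and \Cref{prop_mor-skel} encodes the exchange conditions (ex-ETS-1) and (ex-$\Scal$-fib) --- so that no further compatibility on $\rho$ itself is required.
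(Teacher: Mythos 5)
Your proposal is correct and follows essentially the same route as the paper: the forward direction is by restriction, and the converse fixes factorizations $(P_i;t_i,p_i)\in\Fact_{\Scal}(f_i)$ and pastes the (mor-ETS) diagram for $(f_1,f_2)$ from exactly the two cells you identify --- the closed-pair instance applied to $p_1^*A_1,p_2^*A_2$ and the $t_{12}^*$-pullback of the smooth-pair instance --- glued by naturality and the defining composites of $m_j$ and $\theta$. Your closing observation about why no exchange condition is imposed on $\rho$ itself is also consistent with the paper's \Cref{defn:ETSskel-mor}.
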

\begin{proof}
	Clearly, any external tensor structure $\rho$ on $R$ determines by restriction an external tensor skeleton on it.
	Conversely, suppose that we are given an external tensor skeleton $(\rho^{sm},\rho^{cl})$ on $R$. In order to prove that it defines an external tensor structure, we need to check that it satisfies condition (mor-ETS): given morphisms $f_i: T_i \rightarrow S_i$ in $\Scal$, $i = 1,2$, we have to show that the diagram of functors $\Hbb(S_1) \times \Hbb(S_2) \rightarrow \Hbb(T_1 \times T_2)$
	\begin{equation*}
		\begin{tikzcd}
			f_1^* R_{S_1}(A_1) \boxtimes f_2^* R_{S_2}(A_2) \arrow{r}{m_2} \arrow{d}{\theta} & (f_1 \times f_2)^* (R_{S_1}(A_1) \boxtimes R_{S_2}(A_2)) \arrow{r}{\rho} & (f_1 \times f_2)^* R_{S_1 \times S_2}(A_1 \boxtimes A_2) \arrow{d}{\theta} \\
			R_{T_1}(f_1^* A_1) \boxtimes R_{T_2}(f_2^* A_2) \arrow{r}{\rho} & R_{T_1 \times T_2}(f_1^* A_1 \boxtimes f_2^* A_2) \arrow{r}{m_1} & R_{T_1 \times T_2}((f_1 \times f_2)^*(A_1 \boxtimes A_2))
		\end{tikzcd}
	\end{equation*}
	is commutative. To this end, for $i = 1,2$ choose a factorization $(P_i;t_i,p_i) \in \Fact_{\Scal}(f_i)$. Then we can decompose the diagram is question as
	\begin{equation*}
		\begin{tikzcd}[font=\small]
			\bullet \arrow{ddddd}{\theta} \arrow[equal,bend right]{ddr} \arrow{rr}{m_2} && \bullet \arrow[equal]{d} \arrow{rr}{\rho} && \bullet \arrow{ddddd}{\theta} \arrow[equal]{dl} \\
			& t_{12}^* (p_1^* R_{S_1}(A_1) \boxtimes_2 p_2^* R_{S_2}(A_2)) \arrow{r}{m_2^{sm}} \arrow{dr}{\theta^{sm}} & t_{12}^* p_{12}^* (R_{S_1}(A_1) \boxtimes_2 R_{S_2}(A_2)) \arrow{r}{\rho} & t_{12}^* p_{12}^* R_{S_1 \times S_2}(A_1 \boxtimes_1 A_2) \arrow{d}{\theta^{sm}} \\
			& t_1^* p_1^* R_{S_1}(A_1) \boxtimes_2 t_2^* p_2^* R_{S_2}(A_2) \arrow{u}{m_2^{cl}} \arrow{d}{\theta^{sm}} & t_{12}^* (R_{P_1}(p_1^* A_1) \boxtimes_2 R_{P_2}(p_2^* A_2)) \arrow{d}{\rho} & t_{12}^* R_{P_1 \times P_2}(p_{12}^* (A_1 \boxtimes_1 A_2)) \arrow{d}{\theta^{cl}} \\
			& t_1^* R_{P_1}(p_1^* A_1) \boxtimes_2 t_2^* R_{P_2}(p_2^* A_2) \arrow{d}{\theta^{cl}} \arrow{ur}{m_2^{cl}} & t_{12}^* R_{P_1 \times P_2}(p_1^* A_1 \boxtimes_1 p_2^* A_2) \arrow{ur}{m_1^{sm}} \arrow{dr}{\theta^{cl}} & R_{T_1 \times T_2}(t_{12}^* p_{12}^* (A_1 \boxtimes_1 A_2)) \\
			& R_{T_1}(t_1^* p_1^* A_1) \boxtimes_2 R_{T_2}(t_2^* p_2^* A_2) \arrow{r}{\rho} & R_{T_1 \times T_2}(t_1^* p_1^* A_1 \boxtimes_1 t_2^* p_2^* A_2) \arrow{r}{m_1^{cl}} & R_{T_1 \times T_2}(t_{12}^* (p_1^* A_1 \boxtimes_1 p_2^* A_2)) \arrow{u}{m_1^{sm}} \\
			\bullet \arrow[equal]{ur} \arrow{rr}{\rho} && \bullet \arrow[equal]{u} \arrow{rr}{m_1} && \bullet \arrow[equal,bend right]{uul}
		\end{tikzcd}
	\end{equation*}
	Here, the upper-right and lower-left central pieces are commutative by axiom (mor-ETS) for $\Scal^{sm}$ and for $\Scal^{cl}$, respectively, while the remaining pieces are commutative by naturality and by construction. 
\end{proof}

\begin{rem}\label{rem:ETSkel-ass-comm}
	The associativity and commutativity conditions for external tensor structures on morphisms listed in \cite[\S~8]{Ter24Fib} do not involve inverse image functors except those along permutation isomorphisms. Since the subcategories $\Scal^{sm}$ and $\Scal^{cl}$ contain all isomorphisms of $\Scal$ bu \Cref{hyp_skel}(i), it is clear that these conditions make sense also in the setting of external tensor skeleta.
\end{rem}

\section{External tensor cores}\label{sect_ETScores}

In this final section, we combine the approaches of \Cref{sect_mor-core} and \Cref{sect_ETSkel} in order to obtain an alternative description of external tensor structures which is more adapted to the setting of perverse sheaves and perverse Nori motives. 

As in the previous section, we assume that the base category $\Scal$ admits binary products. Moreover, throughout the present section, we work under \Cref{hyp_skel} and \Cref{hyp_core}.

\subsection{External tensor cores on fibered categories}

In the first part of this section, we fix one triangulated $\Scal$-fibered category $\Hbb$ that we assume to be localic in the sense of \Cref{defn_Hlocal}. 

\begin{defn}
	Let $\Hbb$ be a localic $\Scal$-fibered category. We say that a triangulated external tensor structure $(\boxtimes,m)$ on $\Hbb$ is \textit{localic} if it is smooth-adjointable in the sense of \Cref{defn:geom}(3).
\end{defn}

The natural analogue of \Cref{defn:ETSkel} is the following:

\begin{defn}\label{defn:ETCore}
	An \textit{external tensor core} $(\boxtimes,m^{sm},\bar{m}^{cl})$ on $\Hbb$ is the datum of
	\begin{itemize}
		\item a triangulated external tensor structure $(\boxtimes^{sm},m^{sm})$ on the $\Scal^{sm}$-fibered category $\Hbb^{sm}$,
		\item a triangulated external tensor structure $(\boxtimes^{cl},\bar{m}^{cl})$ on the $\Scal^{cl,op}$-fibered category $\Hbb^{cl}$
	\end{itemize}
	satisfying the following conditions:
	\begin{enumerate}
		\item[(ETC-0)] For every $S_1, S_2 \in \Scal$, the two functors
		\begin{equation*}
			- \boxtimes^{sm} - = - \boxtimes^{sm}_{S_1,S_2} -, \; - \boxtimes^{cl} - = - \boxtimes^{cl}_{S_1,S_2} -: \Hbb(S_1) \times \Hbb(S_2) \rightarrow \Hbb(S_1 \times S_2)
		\end{equation*}
		coincide, so that we can indicate both of them with the same symbol $- \boxtimes - = - \boxtimes_{S_1,S_2} -$.
		\item[(C'-ETC-1)] For every pair Cartesian squares in $\Scal$
		\begin{equation*}
			\begin{tikzcd}
				P_{i,Z_i} \arrow{r}{z'_i} \arrow{d}{p'_i} & P_i \arrow{d}{p_i} \\
				Z_i \arrow{r}{z_i} & S_i
			\end{tikzcd}
			\qquad (i = 1,2)
		\end{equation*}
		with $p_i$ a smooth morphism and $z_i$ a closed immersion, the diagram of functors $\Hbb(Z_1) \times \Hbb(Z_2) \rightarrow \Hbb(P_1 \times P_2)$
		\begin{equation*}
			\begin{tikzcd}
				p_1^* z_{1,*} A_1 \boxtimes p_2^* z_{2,*} A_2 \arrow{r}{m^{sm}} \arrow[equal]{d} & (p_1 \times p_2)^*(z_{1,*} A_1 \boxtimes z_{2,*} A_2) \arrow{r}{\bar{m}^{cl}} & (p_1 \times p_2)^* (z_1 \times z_2)_* (A_1 \boxtimes A_2) \arrow[equal]{d} \\
				z'_{1,*}* {p'_1}^* A_1 \boxtimes z'_{2,*} {p'_2}^* A_2 \arrow{r}{\bar{m}^{cl}} & (z'_1 \times z'_2)_* ({p'_1}^* A_1 \boxtimes {p'_2}^* A_2) \arrow{r}{m^{sm}} & (z'_1 \times z'_2)_* (p'_1 \times p'_2)^* (A_1 \boxtimes A_2)
			\end{tikzcd}
		\end{equation*}
		is commutative.
		\item[(T-ETS-1)] For every pair of commutative triangles in $\Scal$
		\begin{equation*}
			\begin{tikzcd}
				Q_i \arrow{r}{h_i} \arrow{dr}{q_i} & P_i \arrow{d}{p_i} \\
				& S_i 
			\end{tikzcd}
			\qquad (i = 1,2)
		\end{equation*}
		with $p_i$, $q_i$ smooth morphisms and $h_i$ a closed immersion, the diagram of functors $\Hbb_1(S_1) \times \Hbb(S_2) \rightarrow \Hbb(Q_1 \times Q_2)$
		\begin{equation*}
			\begin{tikzcd}
				q_1^* A_1 \boxtimes q_2^* A_2 \arrow{rr}{m^{sm}} \arrow[equal]{d} && (q_1 \times q_2)^* (A_1 \boxtimes A_2) \arrow[equal]{d} \\
				h_1^* p_1^* A_1 \boxtimes h_2^* p_2^* A_2  \arrow{r}{m^{sm}} & (h_1 \times h_2)^* (p_1^* A_1 \boxtimes p_2^* A_2) \arrow{r}{m^{cl}} & (h_1 \times h_2)^* (p_1 \times p_2)^* (A_1 \boxtimes A_2)
			\end{tikzcd}
		\end{equation*}
		is commutative.
	\end{enumerate}
\end{defn}

We can directly state the first main result of this section:

\begin{thm}\label{thm_ETScore}
	Let $\Hbb$ be a localic $\Scal$-fibered category.
	Then every localic external tensor structure on $\Hbb$ determines by restriction and adjunction an external tensor core; conversely, every external tensor core on $\Hbb$ uniquely extends to a localic external tensor structure.
\end{thm}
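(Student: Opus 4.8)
The plan is to follow the two-step strategy of the proof of \Cref{prop_mor-core}, transporting the equivalence between inverse-image and direct-image formulations through adjunction and reducing the core conditions to the skeleton conditions already handled by \Cref{thm_ETSskel} and \Cref{lem:ETSskel-obs}. By \Cref{thm_ETSskel} (including its smooth-adjointable clause), a localic external tensor structure on $\Hbb$ is the same datum as a smooth-adjointable external tensor skeleton $(\boxtimes,m^{sm},m^{cl})$; by \Cref{lem:ETSskel-obs} the latter amounts to the functors $\boxtimes$ together with a left-$\Scal^{sm}$-adjointable $m^{sm}$ and an arbitrary $\Scal^{cl}$-external tensor structure $m^{cl}$ subject to (ex-ETS-0), (C-ETS-1) and (T-ETS-1). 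Since conditions (ETC-0) and (T-ETS-1) appear verbatim in \Cref{defn:ETCore}, and (ex-ETS-0) is identical to (ETC-0), it suffices to (i) exhibit a bijection between $\Scal^{cl}$-external tensor structures $m^{cl}$ and $\Scal^{cl,op}$-external tensor structures $\bar{m}^{cl}$ (with $\boxtimes$ and the left-adjointable $m^{sm}$ fixed), and (ii) show that under this bijection condition (C-ETS-1) is equivalent to condition (C'-ETC-1).

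For step (i) I would prove the external-tensor analogue of \Cref{lem_Scl_Scl-op}: with $\boxtimes$ fixed, every $\Scal^{cl}$-external tensor structure $m^{cl}$ is automatically closed-adjointable, and, once $m^{sm}$ is assumed left-adjointable, every $\Scal^{cl,op}$-external tensor structure is left-adjointable; \Cref{lem:ETS-adj} then converts one into the other and back. The decisive simplification is the one-variable reduction recorded just after \Cref{defn:ETS-adj}: right-$\Scal^{cl}$-adjointability of $m^{cl}$ follows from invertibility of the two partial transformations attached to $z_1 \times \id_{S_2}$ and $\id_{S_1} \times z_2$. For a closed immersion of the shape $z_1 \times \id_{S_2}$ the complementary open immersion is $u_1 \times \id_{S_2}$, where $u_1$ is complementary to $z_1$, so using $m^{sm}$ one identifies $(u_1 \times \id_{S_2})^*(z_{1,*}A_1 \boxtimes B_2)$ with $u_1^* z_{1,*}A_1 \boxtimes B_2$, which vanishes by \Cref{lem_Hlocal}(2); the localization triangle of \Cref{lem_Hlocal}(3) then yields invertibility exactly as in \Cref{lem_Scl_Scl-op}(1). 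The converse direction uses the vanishing $z^* u_{\#} = 0$ in the same way and is precisely where the left-adjointability of $m^{sm}$, i.e. the localicity hypothesis, is genuinely needed; this accounts for the word "localic" in the extension statement.

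For step (ii) the argument is parallel to the equivalence of (C-$\Scal$-fib) and (C'-$\Scal$-fib) established inside \Cref{prop_mor-core}. Expanding $\bar{m}^{cl}$ as the unit--$m^{cl}$--counit zigzag of \Cref{lem:ETS-adj} turns (C'-ETC-1) into a large diagram of functors $\Hbb(Z_1) \times \Hbb(Z_2) \rightarrow \Hbb(P_1 \times P_2)$ whose boundary pieces commute by naturality and the triangular identities; isolating the inner rectangle leaves exactly a reformulation of (C-ETS-1) over the two base-change squares, and the reverse implication proceeds symmetrically. Because the structural maps $\eta$, $\epsilon$, $m^{sm}$ and the relevant coherences behave identically in each of the two product variables, the chase is a formal transcription of the two chases in \Cref{prop_mor-core}, now carried out over products; alternatively one may invoke the interpretation of external tensor structures as $2$-parameter families of morphisms and quote \Cref{prop_mor-core} directly.

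The main obstacle is the adjointability lemma of the second paragraph. Unlike the morphism case, the closed immersion $z_1 \times z_2$ whose open complement one would naively want to exploit does \emph{not} admit a product open complement (its complement being the union $(U_1 \times S_2) \cup (S_1 \times U_2)$), so a direct two-variable vanishing argument is unavailable. The one-variable reduction circumvents this, but it forces one to check that the partial morphisms $z_i \times \id$ are genuine closed immersions with the expected complements $u_i \times \id$ — which follows from \Cref{hyp_skel}(iii) and the characterization of open complements in \Cref{hyp_core}(ii) — and that the coherence of $\boxtimes$ with inverse images is compatible with these product factorizations. Once this bijection is secured, the remaining diagram chase, though lengthy, is entirely routine and may be left to the reader.
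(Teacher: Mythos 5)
Your proposal is correct and follows essentially the same route the paper intends: the printed proof is just the one-line instruction to deduce the theorem from the skeleton result in the same way that \Cref{prop_mor-core} is deduced from \Cref{prop_mor-skel}, and your three steps (reduction via \Cref{thm_ETSskel} and \Cref{lem:ETSskel-obs}, the external-tensor analogue of \Cref{lem_Scl_Scl-op} via the one-variable reduction, and the diagram chase equating (C-ETS-1) with (C'-ETC-1)) are precisely that deduction carried out. Your observation that $z_1\times z_2$ has no product open complement, forcing the partial-morphism reduction, is a genuine subtlety the paper leaves implicit, and you resolve it correctly.
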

\begin{proof}
	This can be deduced from \Cref{prop_mor_ETSkel} in the same way as \Cref{prop_mor-core} can be deduced from \Cref{prop_mor-skel}. We leave the details to the interested reader.
\end{proof}

\subsection{Compatibility with associativity and commutativity constraints}

As done for external tensor skeleta in the previous section, we show that the axioms of external associativity and commutativity constraints can be checked on the level of external tensor cores.

\begin{defn}\label{defn:ETCore-asso-comm}
	Let $(\boxtimes,m^{sm},\bar{m}^{cl})$ be an external tensor core on $\Hbb$.
	\begin{enumerate}
		\item An \textit{external associativity constraint} $a$ on $(\boxtimes,m^{sm},\bar{m}^{cl})$ is the datum of
		\begin{itemize}
			\item a triangulated external associativity constraint $a^{sm}$ on $(\boxtimes,m^{sm})$,
			\item a triangulated external associativity constraint $a^{cl}$ on $(\boxtimes,\bar{m}^{cl})$
		\end{itemize}
		such that, for every $S_1, S_2, S_3 \in \Scal$, the two natural isomorphisms of functors $\Hbb(S_1) \times \Hbb(S_2) \times \Hbb(S_3) \rightarrow \Hbb(S_1 \times S_2 \times S_3)$
		\begin{equation*}
			a^{sm} = a^{sm}_{S_1,S_2,S_3}, \; a^{cl} = a^{cl}_{S_1,S_2,S_3}: (A_1 \boxtimes A_2) \boxtimes A_3 \xrightarrow{\sim} A_1 \boxtimes (A_2 \boxtimes A_3)
		\end{equation*}
		coincide, so that we can indicate both of them with the same symbol $a = a_{S_1,S_2,S_3}$.
		\item An \textit{external commutativity constraint} $c$ on $(\boxtimes,m^{sm},\bar{m}^{cl})$ is the datum of
		\begin{itemize}
			\item a triangulated external commutativity constraint $c^{sm}$ on $(\boxtimes,m^{sm})$
			\item a triangulated external commutativity constraint $c^{cl}$ on $(\boxtimes,\bar{m}^{cl})$
		\end{itemize}
		such that, for every $S_1, S_2 \in \Scal$, the two natural isomorphisms of functors $\Hbb(S_1) \times \Hbb(S_2) \rightarrow \Hbb(S_1 \times S_2)$
		\begin{equation*}
			c^{sm} = c^{sm}_{S_1,S_2}, \; c^{cl} = c^{cl}_{S_1,S_2}: A_1 \boxtimes A_2 \xrightarrow{\sim} \tau^*(A_2 \boxtimes A_1)
		\end{equation*}
		coincide, so that we can indicate both of them with the same symbol $c = c_{S_1,S_2}$.
	\end{enumerate}
\end{defn}

\begin{lem}
	Let $(\boxtimes,m^{sm},\bar{m}^{cl})$ be an external tensor core on $\Hbb$. Then:
	\begin{enumerate}
		\item Giving an external associativity constraint on $(\boxtimes,m^{sm},\bar{m}^{cl})$ is equivalent to giving a triangulated external associativity constraint on the corresponding triangulated external tensor structure $(\boxtimes,m)$.
		\item Giving an external commutativity constraint on $(\boxtimes,m^{sm},\bar{m}^{cl})$ is equivalent to giving a triangulated external commutativity constraint on the corresponding triangulated external tensor structure $(\boxtimes,m)$.
	\end{enumerate}
\end{lem}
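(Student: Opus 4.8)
The plan is to deduce this statement from the skeleton case by exactly the adjunction argument already used to pass from \Cref{prop_mor_ETSkel} to \Cref{thm_ETScore}. I would treat only the associativity assertion (part~1) in detail, since the commutativity assertion (part~2) is entirely parallel, replacing \Cref{lem_constr-skel}(1) and \Cref{lem:adj-asso-comm}(1) by their part-(2) counterparts throughout. The underlying idea is that an associativity (resp. commutativity) constraint is just a system of natural isomorphisms of the form $(A_1 \boxtimes A_2) \boxtimes A_3 \xrightarrow{\sim} A_1 \boxtimes (A_2 \boxtimes A_3)$ (resp. $A_1 \boxtimes A_2 \xrightarrow{\sim} \tau^*(A_2 \boxtimes A_1)$), which does not itself reference inverse or direct images; only the axioms ($a$ETS-2) and ($c$ETS-2) constraining it involve the monoidality isomorphisms, and these are precisely what the skeleton/core formalism lets us test on $\Scal^{sm}$ and $\Scal^{cl}$ separately.

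First I would fix the localic external tensor structure $(\boxtimes,m)$ corresponding to the given core via \Cref{thm_ETScore}, and write $(\boxtimes,m^{sm},m^{cl})$ for its underlying external tensor skeleton, so that $m^{cl}$ is the inverse-image restriction of $m$ to $\Scal^{cl}$, related to $\bar{m}^{cl}$ by the adjunction of \Cref{lem:ETS-adj}. The first step is to invoke \Cref{lem_constr-skel}(1) together with the bijection of \Cref{thm_ETSskel}: giving a triangulated external associativity constraint on $(\boxtimes,m)$ is equivalent to giving an external associativity constraint on the skeleton $(\boxtimes,m^{sm},m^{cl})$, that is, a pair $(a^{sm},a^{cl})$ of associativity constraints on $(\boxtimes^{sm},m^{sm})$ and on $(\boxtimes^{cl},m^{cl})$ sharing the same underlying natural isomorphism. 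The second step is to transport the $\Scal^{cl}$-component across the adjunction: by \Cref{lem:adj-asso-comm}(1), applied in the right-$\Scal^{cl}$-adjointable setting, an associativity constraint on $(\boxtimes^{cl},m^{cl})$ and one on $(\boxtimes^{cl},\bar{m}^{cl})$ amount to the same datum, since they carry identical underlying natural isomorphisms and axiom ($a$ETS-2) holds relative to $m^{cl}$ exactly when it holds relative to $\bar{m}^{cl}$, whereas axiom ($a$ETS-1) is insensitive to the passage. Combining the two steps identifies triangulated external associativity constraints on $(\boxtimes,m)$ with pairs $(a^{sm},a^{cl})$ of constraints on $(\boxtimes^{sm},m^{sm})$ and on $(\boxtimes^{cl},\bar{m}^{cl})$ whose underlying natural isomorphisms coincide, which is precisely an external associativity constraint on the core in the sense of \Cref{defn:ETCore-asso-comm}(1); triangulatedness is automatically preserved because none of these passages alters the underlying natural isomorphisms.

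The step I expect to require the most care is the second one. \Cref{lem:adj-asso-comm} is phrased as a one-way implication (``every constraint on $(\boxtimes,m)$ also defines a constraint on $(\boxtimes,\bar{m})$''), whereas here I need a genuine bijection between constraints on the inverse-image and the direct-image external tensor structures over $\Scal^{cl}$. I would resolve this by noting that the two adjointability passages $m^{cl}\leftrightarrow\bar{m}^{cl}$ of \Cref{lem:ETS-adj} are mutually inverse, so applying \Cref{lem:adj-asso-comm}(1) in both directions to the common underlying natural isomorphism yields the desired equivalence. The remaining bookkeeping, namely checking that the two coincidence conditions (for the skeleton and for the core) match up and that the triangulated property survives, is then routine and I would leave it to the reader in the spirit of the surrounding proofs.
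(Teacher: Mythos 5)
Your proposal is correct and follows essentially the same route as the paper: reduce to the skeleton case via \Cref{lem_constr-skel} and then transport the $\Scal^{cl}$-component across the adjunction via \Cref{lem:adj-asso-comm}. Your extra care about upgrading the one-way implication of \Cref{lem:adj-asso-comm} to a genuine bijection (by observing that the two adjunction passages $m^{cl}\leftrightarrow\bar{m}^{cl}$ are mutually inverse) addresses a point the paper's own two-line proof silently glosses over.
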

\begin{proof}
	\begin{enumerate}
		\item In view of \Cref{lem_constr-skel}(1), it suffices to show that giving an external associativity constraint on $(\boxtimes,m^{cl})$ is equivalent to giving an external associativity constraint on $(\boxtimes,\bar{m}^{cl})$. This follows from \Cref{lem:adj-asso-comm}(1).
		\item In view of \Cref{lem_constr-skel}(2), it suffices to show that giving an external commutativity constraint on $(\boxtimes,m^{cl})$ is equivalent to giving an external commutativity constraint on $(\boxtimes,\bar{m}^{cl})$. This follows from \Cref{lem:adj-asso-comm}(2).
	\end{enumerate}
\end{proof}

\subsection{External tensor cores on morphisms}

We conclude by discussing the notion of external tensor core on morphisms of $\Scal$-fibered categories. 

We fix two localizing triangulated $\Scal$-fibered categories as well as a morphism of $\Scal$-fibered categories $R: \Hbb_1 \rightarrow \Hbb_2$, that we assume to be smooth-adjointable in the sense of \Cref{defn:geom}(2); moreover, we suppose that $\Hbb_1$ and $\Hbb_2$ are endowed with external tensor structures $(\boxtimes_1,m_1)$ and $(\boxtimes_2,m_2)$, respectively, that we assume to be smooth-adjointable in the sense of \Cref{defn:ETSkel}.

\begin{defn}\label{defn:ETCore-rho}
	An \textit{external tensor core} $(\rho^{sm},\rho^{cl})$ on $R$ (with respect to $(\boxtimes_1,m_1)$ and $(\boxtimes_2,m_2)$) is the datum of
	\begin{itemize}
		\item a triangulated external tensor structure $\rho^{sm}$ on $R: \Hbb_1^{sm} \rightarrow \Hbb_2^{sm}$ (with respect to $(\boxtimes_1,m_1^{sm})$ and $(\boxtimes_2,m_2^{sm})$),
		\item a triangulated external tensor structure $\rho^{cl}$ on $R: \Hbb_1^{cl} \rightarrow \Hbb_2^{cl}$ (with respect to $(\boxtimes,\bar{m}_1^{cl})$ and $(\boxtimes,\bar{m}_2^{cl})$)
	\end{itemize}
	such that, for every $S_1, S_2 \in \Scal$, the two natural isomorphisms of functors $\Hbb_1(S_1) \times \Hbb_1(S_2) \rightarrow \Hbb_2(S_1 \times S_2)$
	\begin{equation*}
		\rho^{sm} = \rho^{sm}_{S_1,S_2}, \; \rho^{cl} = \rho^{cl}_{S_1,S_2}: R_{S_1}(A_1) \boxtimes_2 R_{S_2}(A_2) \xrightarrow{\sim} R_{S_1 \times S_2}(A_1 \boxtimes_1 A_2)
	\end{equation*}
	coincide, so that we can write both of them with the same symbol $\rho = \rho_{S_1,S_2}$.
\end{defn}

We conclude by stating the second main result of this section:

\begin{thm}\label{thm:ETScore-mor}
	Giving an external tensor core on the morphism $R$ (with respect to $(\boxtimes_1,m_1)$ and $(\boxtimes_2,m_2)$) is the same as giving a  triangulated external tensor structure on $R$ (with respect to $(\boxtimes_1,m_1)$ and $(\boxtimes_2,m_2)$).
\end{thm}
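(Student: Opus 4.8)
The plan is to deduce this result from the skeleton version \Cref{prop_mor_ETSkel} exactly as \Cref{prop_mor-core} was deduced from \Cref{prop_mor-skel}, and \Cref{thm_ETScore} from \Cref{thm_ETSskel}. By \Cref{prop_mor_ETSkel}, giving a triangulated external tensor structure $\rho$ on $R$ (with respect to $(\boxtimes_1,m_1)$ and $(\boxtimes_2,m_2)$) is the same as giving an external tensor skeleton on $R$, that is, a single underlying system of natural isomorphisms $\rho = \rho_{S_1,S_2}$ which is simultaneously an external tensor structure on $R^{sm}$ (with respect to $(\boxtimes_1,m_1^{sm})$ and $(\boxtimes_2,m_2^{sm})$) and an external tensor structure on $R^{cl}$ (with respect to $(\boxtimes_1,m_1^{cl})$ and $(\boxtimes_2,m_2^{cl})$). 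Comparing with \Cref{defn:ETCore-rho}, the only difference between an external tensor skeleton and an external tensor core on $R$ lies in the closed part: in the skeleton the system $\rho$ must satisfy condition (mor-ETS) over $\Scal^{cl}$ in the inverse-image formulation governed by $(\theta^{cl},m_j^{cl})$, whereas in the core it must satisfy the corresponding condition over $\Scal^{cl,op}$ in the direct-image formulation governed by $(\bar{\theta}^{cl},\bar{m}_j^{cl})$. It therefore suffices to show that, over $\Scal^{cl}$, these two requirements on $\rho$ are equivalent.

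First I would record that every adjointability hypothesis needed even to phrase the direct-image formulation is automatic in the present localic setting. Since $\Hbb_1$ and $\Hbb_2$ are localic, they are closed-adjointable by \Cref{defn_Hlocal}(c); since $R$ carries an underlying $\Scal^{sm}$-structure, \Cref{lem_Scl_Scl-op}(1) shows that its $\Scal^{cl}$-structure $\theta^{cl}$ is right-adjointable, so that $\bar{\theta}^{cl}$ exists and $(\Rcal,\bar{\theta}^{cl})$ is a morphism of $\Scal^{cl,op}$-fibered categories via \Cref{lem_R-adjoint}. The closed-adjointability of the external tensor structures $(\boxtimes_j,m_j^{cl})$ holds automatically by the same localic reasoning underlying \Cref{thm_ETScore} (concretely, the relevant K\"unneth maps for $z_*$ are invertible by the base-change isomorphism of \Cref{lem_Hlocal}(4)), and \Cref{lem:ETS-adj} then produces the monoidality isomorphisms $\bar{m}_j^{cl}$ and the associated external tensor structures over $\Scal^{cl,op}$. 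Thus both the skeleton and the core on $R$ are well posed.

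With this in place, the equivalence of the two closed compatibility conditions is precisely the content of \Cref{lem:rho-adj}, read in the right-adjointable (closed) case as a bijection. That lemma shows directly that if $\rho$ satisfies condition (mor-ETS) over $\Scal^{cl}$ with respect to $(\theta^{cl},m_j^{cl})$, then the same $\rho$ satisfies the condition over $\Scal^{cl,op}$ with respect to $(\bar{\theta}^{cl},\bar{m}_j^{cl})$. The converse is obtained by running the same diagram chase backwards: the passages $\theta^{cl} \leftrightarrow \bar{\theta}^{cl}$ and $m_j^{cl} \leftrightarrow \bar{m}_j^{cl}$ are mutually inverse bijections between the respective structures, which is exactly why \Cref{lem_R-adjoint} and \Cref{lem:ETS-adj} produce genuine structures in both directions under the adjointability hypotheses. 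The large diagram expressing (mor-ETS) in one formulation is carried isomorphically onto the diagram expressing it in the other by applying the pertinent units and counits attached to $z$, all of which are invertible in the localic setting thanks to the full faithfulness of $z_*$ and the vanishings of \Cref{lem_Hlocal}(2). Consequently one diagram commutes if and only if the other does, and the skeleton and the core on $R$ determine one another; combined with \Cref{prop_mor_ETSkel} this yields the claim.

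The main obstacle is the converse direction just sketched: \Cref{lem:rho-adj} is stated only as a one-way implication, so the crux is to argue that the adjunction passage it performs is reversible. I expect this to reduce, as in the proof of \Cref{prop_mor-core}, to the observation that under \Cref{hyp_core} the unit and counit morphisms attached to a closed immersion $z$ (those governing $z_* z^*$ together with the localization triangle of \Cref{lem_Hlocal}(3)) are isomorphisms on the relevant subcategories, so that no information is lost in translating between inverse and direct images. Once this reversibility is isolated, the remainder is the same bookkeeping with units, counits and naturality already carried out in \Cref{lem:rho-adj} and \Cref{prop_mor-core}, and I would delegate the explicit diagram to the interested reader, as the paper does for \Cref{thm_ETScore}.
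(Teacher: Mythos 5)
Your proposal is correct and follows essentially the same route as the paper: reduce via \Cref{prop_mor_ETSkel} to comparing the skeleton and core notions on $R$, observe that they differ only in whether the closed part of $\rho$ is required to be compatible with $(\theta^{cl},m_j^{cl})$ over $\Scal^{cl}$ or with $(\bar{\theta}^{cl},\bar{m}_j^{cl})$ over $\Scal^{cl,op}$, and conclude by \Cref{lem:rho-adj}. The paper's own proof is a one-liner citing \Cref{lem:rho-adj}; your additional care about the converse direction (reversibility of the adjunction passage) addresses a point the paper leaves implicit and is the right way to justify it.
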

\begin{proof}
	In view of \Cref{prop_mor_ETSkel}, it suffices to show that giving an external tensor core on $R$ is the same as a geometric triangulated external tensor skeleton on $R$. To this end, it suffices to show that the notion of external tensor structure on $R: \Hbb_1^{cl} \rightarrow \Hbb_2^{cl}$ is the same regardless of whether it is regarded as a morphism of $\Scal$-fibered categories or as a morphism of $\Scal^{op}$-fibered categories. This follows easily from \Cref{lem:rho-adj}.
\end{proof}

\begin{rem}\label{rem:ETCore-ass-comm}
	The analogue of \Cref{rem:ETSkel-ass-comm} holds: the associativity and commutativity conditions for external tensor structures on morphisms from \cite[\S~8]{Ter24Fib} admit obvious equivalent formulations in the setting of external tensor cores.
\end{rem}

\end{document}